\numberwithin{equation}{section}
\newtheorem{theorem}{Theorem}[section]
\newtheorem{lemma}[theorem]{Lemma}
\newtheorem{proposition}[theorem]{Proposition}
\newtheorem{con}[theorem]{Conjecture}
\newtheorem{cor}[theorem]{Corollary}
\theoremstyle{remark}
\newtheorem{remark}[theorem]{Remark}
\DeclareMathOperator{\Ric}{Ric}
\DeclareMathOperator{\Ein}{Ein}
\DeclareMathOperator{\tr}{tr}
\DeclareMathOperator{\sign}{sign}
\title{The prescribed cross curvature problem on the three-sphere}
\author{Timothy Buttsworth\thanks{School of Mathematics and Physics, The University of Queensland, St Lucia,~QLD 4072, Australia}~\thanks{Research supported by the Australian Government through the Australian Research Council's Discovery Projects funding scheme (DP180102185).} \\
\small{\texttt{t.buttsworth@uq.edu.au}}
\and Artem Pulemotov\footnotemark[1]~\footnotemark[2] \\
\small{\texttt{a.pulemotov@uq.edu.au}}}
\begin{document}

\maketitle

\begin{abstract}
The paper studies the problem of prescribing positive cross curvature on the three-dimensional sphere. We produce several existence results and an example of non-uniqueness, disproving a conjecture of Hamilton's.
\end{abstract}

\tableofcontents

\newpage 
\section{Introduction}

Let $(M,g)$ be a Riemannian manifold. Consider the Einstein tensor field of~$g$ given by $$\Ein(g)=\text{Ric}(g)-\frac{S(g)g}{2}$$
and the corresponding $(1,1)$-tensor field $\mathcal{E}(g)=\Ein(g)^{\sharp}$. Assuming that $\mathcal E(g)$ is invertible on~$M$, we define the \textit{cross curvature} $X(g)$ by the formula  
\begin{align*}
X(g)=\text{det}(\mathcal{E}(g))(\mathcal{E}^{-1})^{\flat}.
\end{align*}
One may allow the metric $g$ to depend on a time parameter $t\ge0$ and deform it in the direction of~$X(g)$. As a result, one obtains the evolution equation
\begin{equation}\label{XCF}
\frac{\partial g(t)}{\partial t}=X(g(t)), 
\end{equation}
called the \emph{cross curvature flow}. This equation was introduced by Chow--Hamilton in~\cite{ChowHam} in the context of uniformising negatively curved metrics. It has been investigated by many authors; see the survey~\cite{BryanSurvey}. The associated solitons were considered in~\cite{Glickenstein,HoShin}.

The effort to understand cross curvature leads to several natural questions: Which tensor fields arise as $X(g)$ for some metric $g$ on a given manifold? To what extent does $X(g)$ determine~$g$? If distinct metrics $g$ with the same $X(g)$ can be found, are they necessarily isometric? We use the umbrella term \emph{prescribed cross curvature problem} for all these questions. Essentially, the problem consists in establishing existence and uniqueness properties of solutions to the equation
\begin{align}\label{PXC_intro}
X(g)=Y,
\end{align}
where $Y$ is a given tensor field. It is to be expected that research into~\eqref{PXC_intro} will yield a better understanding of the evolution equation~\eqref{XCF} as well as $X(g)$ itself. For instance, DeTurck's work on metrics with prescribed Ricci curvature has led to the discovery of the famous DeTurck trick for the Ricci flow in~\cite{DDTtrick}. Many of the techniques that are effective for the analysis of~\eqref{PXC_intro} are likely to be effective on the soliton equation associated with~\eqref{XCF}.

Assume that $M$ is three-dimensional. Instead of defining the cross curvature as above, one may think of it as a $(1,2)$-tensor field on~$M$. We explain the details of this approach in Section~\ref{XC3D}. It is equivalent to ours; see Proposition~\ref{prop_12to02}. The first systematic study of the prescribed cross curvature problem was carried out for $(1,2)$-tensor fields by Gkigkitzis in his Ph.D. thesis~\cite{IG08} written at Columbia. He proved several useful properties of $X(g)$ and made several interesting observations related to equation~\eqref{PXC_intro}.

The existing literature on the cross curvature focuses primarily on the case where $M$ is three-dimensional and closed. If one assumes, in addition, that $M$ is simply-connected, then it must be the sphere~$\mathbb S^3$. The following conjecture (more precisely, its equivalent form for $(1,2)$-tensor fields) was stated in~\cite{IG08} and attributed to Hamilton. However, the only case where it was settled was one with $\mathbb S^3$ viewed as $SU(2)$ and the analysis restricted to left-invariant metrics.

\begin{con}\label{mainconjecture}
Given a positive-definite symmetric (0,2)-tensor field $Y$ on~$\mathbb{S}^3$, there exists a unique Riemannian metric $g$ solving (\ref{PXC_intro}).
\end{con}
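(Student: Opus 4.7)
The plan is to attack Conjecture~\ref{mainconjecture} via the continuity method, with the round metric as a reference solution. For the unit round metric $g_0$ on $\mathbb S^3$ one has $\Ric(g_0)=2g_0$, hence $\Ein(g_0)=-g_0$, $\mathcal E(g_0)=-\mathrm{Id}$, and a direct computation gives $X(g_0)=g_0$; so $Y_0:=g_0$ lies in the range of $X$. Given a prescribed positive definite $Y$, I would connect $Y_0$ to $Y$ along the straight-line path $Y_t=(1-t)Y_0+tY$, which stays positive definite on $[0,1]$, and let $J\subseteq[0,1]$ be the set of parameters for which $X(g_t)=Y_t$ has a solution $g_t$ with $\mathcal E(g_t)$ negative definite.

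For openness of $J$ I would linearise $X$ at a solution and extract an elliptic operator via a DeTurck-type gauge fix that breaks the $\Diff(\mathbb S^3)$-covariance of the equation. Because $X$ is polynomial in the invertible tensor $\mathcal E$, which in turn is (essentially) the Ricci tensor, its principal symbol on the cone $\{\mathcal E<0\}$ should be elliptic, and the implicit function theorem in Hölder spaces $C^{k,\alpha}$ then yields local solvability. For closedness I would derive a priori estimates from the identity $X(g)=\det(\mathcal E)\mathcal E^{-1}$: two-sided bounds on the positive definite $Y_t$ translate into two-sided bounds on the eigenvalues of $\mathcal E(g_t)$, and in dimension three this controls the full sectional curvature of $g_t$. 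Combined with a normalisation (for instance fixing the volume of $g_t$ or its value at a basepoint, to eliminate the remaining diffeomorphism degeneracy) and a Schauder bootstrap, this should force limits of solutions to stay inside the admissible class.

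For uniqueness I would apply a maximum principle argument to two hypothetical solutions $g_1,g_2$: at a point where an eigenvalue of $g_1^{-1}g_2$ is extremal, the identity $X(g_1)=X(g_2)$ should produce a second-order inequality forcing $g_1=g_2$.

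The hard part, I expect, is uniqueness rather than existence. Viewed as a pointwise algebraic map $\mathcal E\mapsto X$, the cross curvature is invariant under $\mathcal E\mapsto-\mathcal E$, since $\det\mathcal E$ and $\mathcal E^{-1}$ each change sign in dimension three. This hints that the nonlinearity is genuinely non-injective, so the maximum principle step may fail without an additional sign restriction on $\mathcal E(g)$; if uniqueness does collapse, the natural candidates for counterexamples would be pairs of non-isometric Berger-type metrics on $\mathbb S^3$. A secondary difficulty is verifying ellipticity of the gauge-fixed linearisation on a wide enough cone of metrics: since $X$ is quadratic in curvature, its symbol contains cross-terms that may only be positive on a narrow neighbourhood of the reference solution, making it non-trivial to keep the continuity path inside the admissible region throughout $[0,1]$.
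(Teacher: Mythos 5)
The statement you are trying to prove is false, and the paper's main contribution is precisely to disprove its uniqueness half: Theorem~\ref{thm_nonuniq} constructs an $SO(3)$-invariant positive-definite $Y$ for which $X(g)=Y$ has at least three distinct $SO(3)$-invariant solutions, two of them isometric to each other but not to the third (Theorem~\ref{thm_isometry}). So no maximum-principle argument for uniqueness can succeed. You do flag this risk, and your observation that the pointwise algebraic map $\mathcal E\mapsto X$ is invariant under $\mathcal E\mapsto-\mathcal E$ is correct (the paper disposes of the wrong-sign branch via Cartan--Hadamard in the proof of Proposition~\ref{prop_12to02}), but that ambiguity is not where uniqueness actually breaks. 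Your proposed counterexample class is also exactly the wrong one: for left-invariant (Berger-type) metrics on $SU(2)$ uniqueness \emph{does} hold (Theorem~\ref{thm_SU2}, going back to Gkigkitzis); the genuine counterexamples live in the $SO(3)$-invariant cohomogeneity-one class and arise from multiple solutions of the singular ODE boundary-value problem~(\ref{MEFL'})--(\ref{SCSO3}), distinguished by their scalar curvature at the singular orbits.

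On the existence half, your continuity method has a serious gap at the closedness step. Two-sided bounds on $Y_t$ do give pointwise two-sided bounds on the eigenvalues of $\mathcal E(g_t)$ (since $X(g)(e_i,e_i)=\lambda_j\lambda_k$ in a diagonalising frame), hence on the sectional curvatures, but pointwise curvature bounds alone do not yield $C^{2,\alpha}$ compactness of a sequence of metrics: you also need an injectivity-radius (non-collapsing) bound and control of derivatives of the curvature, and nothing in the equation obviously supplies these along the path $Y_t$; a volume or basepoint normalisation does not rule out collapse. This is presumably why the paper never attempts a global continuity method: it proves existence only near the round metric (Section~\ref{sec_round}, by an inverse-function-theorem argument on the divergence-free slice using the spectrum of the Lichnerowicz Laplacian and diffeomorphism invariance), and within the two cohomogeneity-one symmetry classes (Sections~\ref{sec_so2_exist} and~\ref{SectionexistenceSO3}, by reduction to ODE boundary-value problems with singular endpoints and Brouwer/Leray--Schauder degree). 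Your openness step is sound in spirit---the gauge-fixed linearisation is elliptic, as in Theorem~\ref{thm_local}---but the general existence statement remains open even in the paper.
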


In the present paper, we obtain a series of results concerning the prescribed cross curvature problem on~$\mathbb S^3$. Our theorems provide a large body of evidence to support the existence portion of Conjecture~\ref{mainconjecture}. Perhaps surprisingly, they demonstrate that uniqueness fails in general. In Section~\ref{local}, we prove existence and regularity of solutions to~\eqref{PXC_intro} in a neighbourhood of a point on~$\mathbb S^3$. These results are in the spirit of the early works on the prescribed Ricci curvature problem by DeTurck and DeTurck--Kazdan; see~\cite{DDT81,DeTurckKazdan}. In Section~\ref{sec_round}, we show that a metric $g$ satisfying~\eqref{PXC_intro} can be found if $Y$ is close to the round metric in an appropriate Sobolev norm. The counterpart of this result for (1,2)-tensors was conjectured but not proven in~\cite{IG08}. In Sections~\ref{sec_so2_exist} and~\ref{SectionexistenceSO3}, we establish existence under the assumption that $Y$ is invariant under the action of $SO(2)\times SO(2)$ and $SO(3)$, respectively. (For the former symmetry, we also require that $Y$ be ``diagonal".) These are the only effective cohomogeneity one actions on~$\mathbb S^3$; see~\cite{Neumann68}. In Section~\ref{sec_nonuniq}, we demonstrate that the uniqueness portion of Conjecture~\ref{mainconjecture} fails for some $SO(3)$-invariant choices of~$Y$. Moreover, we construct \emph{non-isometric} metrics with the same cross curvature. While Gkigkitzis discussed tackling~\eqref{PXC_intro} under some cohomogeneity one symmetry assumptions by variational methods in~\cite{IG08}, he did not prove any theorems or state any specific conjectures. Finally, in Section~\ref{SU2}, we address the prescribed cross curvature problem for left-invariant metrics on~$SU(2)$. In this setting, Conjecture~\ref{mainconjecture} was verified in~\cite{IG08}. We re-state the result and establish some dynamical properties of the map~$g\mapsto X(g)$.

Throughout the paper, the terms \emph{metric} and \emph{tensor field} refer to smooth metrics and tensor fields unless indicated otherwise.

\section{Cross curvature on the three-sphere}\label{XC3D}

Let $M$ be a smooth connected oriented manifold. Given a Riemannian metric $g$ on~$M$, we use the notation $\Ric(g)$ and $S(g)$ for its Ricci and scalar curvature. Consider the Einstein (1,1)-tensor field
\begin{align*}
    \mathcal E(g)=\Big(\Ric(g)-\frac{S(g)g}{2}\Big)^\sharp,
\end{align*}
where $\sharp$ is one of the musical isomorphisms. Assuming $\mathcal E(g)$ is invertible, define
\begin{align}\label{DefnXC'}
    X(g)(\cdot,\cdot)=\det(\mathcal{E}(g))(\mathcal{E}(g)^{-1})^{\flat}(\cdot,\cdot)=\det(\mathcal{E}(g))\,g(\mathcal{E}(g)^{-1}\cdot,\cdot).
\end{align}
We call $X(g)$ the \emph{cross curvature (0,2)-tensor} of~$g$.

Suppose $M$ has dimension~3. In this case, one may approach the concept of the cross curvature in a different way. Specifically, let $\mu(g)$ and $\nu(g)$ be the volume $3$-form of~$g$ and the $(3,0)$-tensor field obtained from $\mu(g)$ by raising the indices. We define the \emph{cross curvature (1,2)-tensor} $T(g)$ by the formula
\begin{align*}
g(T(g)(e_i,e_j),e_m)=\tfrac12\sum_{k,l=1}^{3}\textrm{Rm}(e_k,e_l,e_i,e_j)\nu\big(e_k^{\flat},e_l^{\flat},e_m^{\flat}\big),
\end{align*}
where Rm is the Riemann curvature and $\{e_1,e_2,e_3\}$ is an orthonormal positively oriented basis of the tangent space. For all $A$ and $B$, the equality $T(g)(A,B)=-T(g)(B,A)$ holds, and the traces of $T(g)(A,\cdot)$ and $T(g)(\cdot,B)$ are both~0. Following~\cite{IG08}, we call a $(1,2)$-tensor field $\tilde{Y}$ that possesses these two properties a \textit{cross tensor field}. One may interpret $\tilde Y$ as a map from $TM\wedge TM$ to~$TM$. If $\{A,B,\tilde{Y}(A,B)\}$ is a positively oriented basis for all linearly independent $A$ and $B$, then $\tilde{Y}$ is said to be positive.

Throughout the rest of this paper, we assume that $M$ is the sphere~$\mathbb S^3$. Let us explain the relation between $X(g)$ and~$T(g)$. Take a positive-definite symmetric $(0,2)$-tensor field $Y$ on~$\mathbb{S}^3$. Let $\star$ be the Hodge star operator associated with~$Y$. Define the anti-symmetric $(1,2)$-tensor field $\mathcal F(Y)$ by setting
$$
\mathcal F(Y)(A,B)=\star\,(A\wedge B).
$$
The relation between $X(g)$ and~$T(g)$ is given by the following result.

\begin{proposition}\label{prop_12to02} 
The map $\mathcal F$ is a bijection between the space of positive-definite symmetric (0,2)-tensor fields and the space of positive cross tensor fields on~$\mathbb{S}^3$. Moreover, given a metric $g$ with $X(g)>0$, the equality $X(g)=Y$ holds if and only if $T(g)=\mathcal F(Y)$.
\end{proposition}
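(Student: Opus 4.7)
The plan is to split the argument into two pieces. First, show $\mathcal F$ is a bijection between the indicated tensor spaces. Second, show that whenever $X(g)>0$ one has the identity $T(g)=\mathcal F(X(g))$. The biconditional of the second sentence then follows from injectivity of $\mathcal F$, since $X(g)=Y$ if and only if $\mathcal F(X(g))=\mathcal F(Y)$ if and only if $T(g)=\mathcal F(Y)$.

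For the bijectivity, I would work pointwise in a positively oriented basis $\{v_1,v_2,v_3\}$. Using the identity $\epsilon_{abc}A^a_i A^b_j A^c_k=(\det A)\epsilon_{ijk}$ applied to the change-of-basis matrix from a $Y$-orthonormal frame, a short computation with the Hodge star yields
\[
\mathcal F(Y)(v_i,v_j)^l \;=\; \sqrt{\det Y}\,\epsilon_{ijk}\,Y^{lk},
\]
so $\mathcal F(Y)$ is determined entirely by the $(2,0)$-tensor $\Sigma^{lk}=\sqrt{\det Y}\,Y^{lk}$. From this explicit formula, antisymmetry of $\mathcal F(Y)$ in its lower arguments is immediate; the two tracelessness conditions translate into symmetry of $\Sigma$; and the positivity condition translates into positive-definiteness of $\Sigma$, the latter by observing $\det(A,B,\mathcal F(Y)(A,B))=w^T\Sigma w$ where $w_k=\epsilon_{ijk}A^iB^j$. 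The assignment $Y\mapsto\Sigma$ admits the explicit inverse $\Sigma\mapsto(\det\Sigma)\,\Sigma^{-1}$, giving injectivity. For surjectivity, start from a positive cross tensor field $\tilde Y$, extract $\Sigma$ by $\tilde Y^l_{ij}=\epsilon_{ijk}\Sigma^{lk}$; the cross tensor axioms force $\Sigma$ to be symmetric and positive-definite, and $Y=(\det\Sigma)\,\Sigma^{-1}$ is the required preimage.

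For $T(g)=\mathcal F(X(g))$, I would again argue pointwise. At each point, choose a $g$-orthonormal positively oriented frame $\{e_i\}$ in which the Einstein tensor is diagonal with eigenvalues $E_1,E_2,E_3$; in this frame $X(g)$ is also diagonal, with entries $E_jE_k$. Since the Weyl tensor vanishes in dimension three, the Riemann tensor is determined entirely by Ricci and the scalar curvature; substituting this standard expression into the defining formula for $T(g)$, and evaluating the contraction with $\nu(g)$ (whose components in the orthonormal frame are Levi-Civita symbols), yields a closed-form expression for each $T(g)(e_i,e_j)$. The vector $\mathcal F(X(g))(e_i,e_j)$ is computed independently by passing to the $X(g)$-orthonormal frame $f_i=e_i/\sqrt{X_i}$ (well-defined precisely because $X(g)>0$) and invoking $\star_{X(g)}(f_i\wedge f_j)=f_k$. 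Matching the two expressions establishes the identity.

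The main obstacle is the sign and normalisation bookkeeping in the second step: one must carry the conventions used for the Riemann tensor and for the Hodge star through consistently, so that the positive square roots $\sqrt{X_i}$ line up with the signs produced by the three-dimensional Riemann identity. Once consistent conventions are fixed, the verification in the diagonalising frame is a finite calculation.
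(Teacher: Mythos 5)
Your reduction of the bijectivity of $\mathcal F$ to the correspondence $Y\mapsto\Sigma=\sqrt{\det Y}\,Y^{-1}$ (with explicit inverse $\Sigma\mapsto(\det\Sigma)\Sigma^{-1}$) is correct and is a genuinely different, more explicit route than the paper's, which proves surjectivity by diagonalising the self-adjoint operator $u\mapsto\tilde Y(*u)$ relative to a background metric and injectivity by comparing orthonormal frames. The logical scheme of deducing the biconditional from injectivity of $\mathcal F$ together with the single identity $T(g)=\mathcal F(X(g))$ is also fine in principle.

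The gap is in the second step, and it is not ``sign and normalisation bookkeeping''. In a positively oriented $g$-orthonormal frame diagonalising the curvature operator with eigenvalues $\lambda_1,\lambda_2,\lambda_3$, one has $X(g)(e_i,e_i)=\lambda_j\lambda_k$ and $T(g)(e_i,e_j)=\lambda_k e_k$, whereas $\mathcal F(X(g))(e_i,e_j)=|\lambda_k|\,e_k$ for $(i,j,k)$ even. Hence $T(g)(e_i,e_j)=\sign(\lambda_k)\,\mathcal F(X(g))(e_i,e_j)$, and the hypothesis $X(g)>0$ only forces the three $\lambda_i$ to share a common sign at each point; it does not force that sign to be positive. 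At a point where all sectional curvatures are negative one has $T(g)=-\mathcal F(X(g))$, so the identity you propose to verify is false pointwise, and no consistent choice of conventions can repair it: the finite calculation in the diagonalising frame necessarily produces this $\pm$ ambiguity. What is needed is a global input: the common sign is locally constant, hence constant on the connected manifold $\mathbb S^3$, and the all-negative case is excluded because $\mathbb S^3$, being compact and simply connected, admits no metric of everywhere negative sectional curvature (Cartan--Hadamard). This is the one place where the proposition genuinely uses that the manifold is $\mathbb S^3$ rather than an arbitrary oriented $3$-manifold, and it is absent from your argument. (The converse direction, $T(g)=\mathcal F(Y)\Rightarrow X(g)=Y$, does not need it, since positivity of $\mathcal F(Y)$ directly forces $\lambda_k>0$; it is precisely the direction you chose to prove, $X(g)=Y\Rightarrow T(g)=\mathcal F(Y)$, that requires the global step.)
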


\begin{proof}
We begin by proving that $\mathcal F$ is surjective. Choose a positive cross tensor field $\tilde Y$ and a background metric $g_0$ on~$\mathbb S^3$ with Hodge star operator~$*$\,. Consider a linear map $L:T\mathbb{S}^3\to T\mathbb{S}^3$ defined by $L(u)=\tilde{Y}(*u)$. The anti-symmetry and traceless properties of $\tilde{Y}$ imply that $L$ is self-adjoint with respect to~$g_0$. Therefore, for every $p\in\mathbb S^3$, one can find a $g_0$-orthonormal positively oriented basis of $T_p\mathbb S^3$ in which~$L$ is diagonal. Rescaling its vectors if necessary, one obtains a new basis, $\{V_1,V_2,V_3\}$, such that $\tilde{Y}(V_i,V_j)=V_k$ for each even permutation $(i,j,k)$ of $\{1,2,3\}$. Define $Y(p)$ to be the (0,2)-tensor at $p$ given by the identity matrix in~$\{V_1,V_2,V_3\}$. Clearly, $\mathcal F(Y)=\tilde Y$.

Next, we show that $\mathcal F$ is injective. Suppose that $\mathcal F(Y^1)=\mathcal F(Y^2)=\tilde Y$. Given~$p\in\mathbb S^3$, choose a positively oriented $Y^l$-orthonormal basis $\{V_1^l,V_2^l,V_3^l\}$ of the space $T_p\mathbb S^3$ for each $l=1,2$. Denote by $\star_l$ the Hodge star operator associated with~$Y^l$. We have
\begin{align*}
\star_1(V_i^2\wedge V_j^2)=\mathcal F(Y^1)(V_i^2,V_j^2)=\mathcal F(Y^2)(V_i^2,V_j^2)=\star_2(V_i^2\wedge V_j^2)=V_k^2,
\end{align*}
provided that $(i,j,k)$ is an even permutation of $\{1,2,3\}$. Consequently, the basis $\{V_1^2,V_2^2,V_3^2\}$ must be $Y^1$-orthonormal, which means $Y^1=Y^2$.

Let us prove that $X(g)=Y$ if and only if $T(g)=\mathcal F(Y)$. Given a Riemannian metric $g$ on $\mathbb{S}^3$ and a point~$p\in\mathbb S^3$, choose a positively oriented $g$-orthonormal basis $\{e_1,e_2,e_3\}$. Without loss of generality, assume that $e_2\wedge e_3$, $e_3\wedge e_1$ and $e_1\wedge e_2$ are eigenvectors of the curvature operator of $g$ with eigenvalues $\lambda_1$, $\lambda_2$ and $\lambda_3$, respectively. Then $\{e_1,e_2,e_3\}$ diagonalises~$\mathcal E(g)$ and $X(g)$. In fact, $\mathcal E(g)e_i=-\lambda_i e_i$ and $X(g)(e_i,e_i)=\lambda_j\lambda_k$ for every permutation $(i,j,k)$ of $\{1,2,3\}$.

If $X(g)=Y$, then 
$$
\frac{e_k}{\sqrt{\lambda_i\lambda_j}}=\star\,\bigg(\frac{e_i}{\sqrt{\lambda_j\lambda_k}}\wedge\frac{e_j}{\sqrt{\lambda_i\lambda_k}}\bigg),
$$
provided $(i,j,k)$ is even. As a result,
\begin{align*}
T(g)(e_i,e_j)&=\sum_{m=1}^3g(T(e_i,e_j),e_m)e_m
\\
&=\lambda_ke_k=\lambda_k\sqrt{\lambda_i\lambda_j}\star\bigg(\frac{e_i}{\sqrt{\lambda_j\lambda_k}}\wedge\frac{e_j}{\sqrt{\lambda_i\lambda_k}}\bigg)=\sign(\lambda_k)\mathcal F(Y)(e_i,e_j).
\end{align*}
Since $Y$ is positive-definite, $\sign(\lambda_i)$ is the same for all~$i$. Moreover, it does not depend on the choice of the point $p\in\mathbb S^3$ by the continuity of $\mathcal{E}(g)$ and the fact that $\mathcal E(g)$ is invertible across~$\mathbb S^3$. If $\sign(\lambda_i)=-1$, then $g$ is a metric with negative sectional curvature. However, the existence of such a metric is ruled out by the Cartan--Hadamard theorem. We conclude that $T(g)(e_i,e_j)=\mathcal{F}(Y)(e_i,e_j)$.

Conversely, suppose that $T(g)=\mathcal F(Y)$. Then
$$
\star\bigg(\frac{e_i}{\sqrt{\lambda_j\lambda_k}}\wedge\frac{e_j}{\sqrt{\lambda_i\lambda_k}}\bigg)=\mathcal{F}(Y)\bigg(\frac{e_i}{\sqrt{\lambda_j\lambda_k}},\frac{e_j}{\sqrt{\lambda_i\lambda_k}}\bigg)=T(g)\bigg(\frac{e_i}{\sqrt{\lambda_j\lambda_k}},\frac{e_j}{\sqrt{\lambda_i\lambda_k}}\bigg)=\frac{e_k}{\sqrt{\lambda_i\lambda_j}}
$$
for every even permutation $(i,j,k)$ of $\{1,2,3\}$. This implies that the vectors
$$
V_1=\frac{e_1}{\sqrt{\lambda_2\lambda_3}},\qquad V_2=\frac{e_2}{\sqrt{\lambda_1\lambda_3}}\qquad\mbox{and}\qquad V_3=\frac{e_3}{\sqrt{\lambda_1\lambda_2}}
$$
form a $Y$-orthonormal basis. Consequently,
\begin{align*}
X(g)(e_i,e_l)&=\lambda_j\lambda_k Y(V_i,V_l)=Y(e_i,e_l).
\end{align*}
Here, as above, $(i,j,k)$ is an even permutation of $\{1,2,3\}$, and $l=1,2,3$.

\end{proof}

The following conjecture appeared in~\cite{IG08}, attributed to Hamilton.

\begin{con}\label{rc12}
Let $\tilde{Y}$ be a (1,2)-tensor field on $\mathbb{S}^3$ such that $\tilde{Y}(p)$ is a positive cross tensor for every~$p\in\mathbb S^3$. There exists a unique Riemannian metric $g$ on $\mathbb S^3$ with~$T(g)=\tilde{Y}$. 
\end{con}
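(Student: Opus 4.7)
The obvious first move is to translate the $(1,2)$-formulation into the $(0,2)$-formulation via the preceding Proposition~\ref{prop_12to02}: set $Y:=\mathcal F^{-1}(\tilde Y)$, a positive-definite symmetric $(0,2)$-tensor field, and note that $T(g)=\tilde Y$ is equivalent to $X(g)=Y$ for any metric $g$ with $X(g)>0$. A quick check using the relations $T(g)(e_i,e_j)=\lambda_k e_k$ and $X(g)(e_i,e_i)=\lambda_j\lambda_k$ from the proof above shows that if $\tilde Y$ is positive then any $g$ with $T(g)=\tilde Y$ automatically has $X(g)>0$, so Conjecture~\ref{rc12} is \emph{equivalent} to Conjecture~\ref{mainconjecture}. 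I would therefore work throughout with the $(0,2)$-equation $X(g)=Y$, which is a fully nonlinear second-order PDE system in~$g$.

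For existence, my preferred strategy is a continuity method along the family $Y_t:=(1-t)c\,g_0+tY$ for $t\in[0,1]$, where $g_0$ is the round metric and $c>0$ is chosen so that $X(g_0)=c\,g_0$, giving the trivial solution at $t=0$. Openness of the set of $t$ for which a smooth solution $g_t$ exists should follow from the implicit function theorem after a DeTurck-type gauge fix, since $g\mapsto X(g)$ is $\Diff$-equivariant and its unmodified linearization inherits a huge kernel generated by Lie derivatives $\mathcal L_V g$. Closedness would require a priori $C^{k,\alpha}$ bounds, which one might derive from the positivity of $Y_t$ together with the pointwise algebraic structure revealed in the proof of Proposition~\ref{prop_12to02}: at each point of $\mathbb S^3$, the solution $g$ simultaneously diagonalises $\mathcal E(g)$ and $X(g)$ with eigenvalues $(-\lambda_i,\lambda_j\lambda_k)$, so $\lambda_i$ can be recovered algebraically from the eigenvalues of $Y$, giving control of the Ricci spectrum. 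A variational alternative would be to seek critical points of a functional whose Euler--Lagrange equation is $X(g)=Y$; Gkigkitzis in~\cite{IG08} indicated this direction.

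Uniqueness I would attempt via the same pointwise rigidity: knowing $Y$ algebraically determines the Ricci eigenvalues of any solution in the common eigenframe, and the sign ambiguity is fixed by Cartan--Hadamard as in the proof above. The hope is that this forces the problem to reduce to reconstructing $g$ from a highly constrained algebraic prescription of its Ricci curvature, which in three dimensions determines the full Riemann tensor. The main obstacle, and the point I expect to be fatal, is precisely here: Ricci reconstruction of this type is generically non-unique, the eigenframe itself depends on the unknown $g$, and the paper's introduction already announces uniqueness counterexamples for certain $SO(3)$-invariant $Y$. Consequently the plan above cannot succeed at the stated level of generality, and the realistic version of the conjecture one can hope to establish is existence plus local uniqueness near the round metric or within a cohomogeneity-one symmetry class --- which is, in fact, the program the remaining sections of the paper appear to pursue.
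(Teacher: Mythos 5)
You have correctly recognised that the statement is a conjecture which the paper does not prove --- and indeed cannot be proved as stated, since the paper itself refutes the uniqueness half. Your reduction to the $(0,2)$-formulation via Proposition~\ref{prop_12to02} is exactly the paper's own observation (including the point that positivity of $\tilde Y$ forces all curvature-operator eigenvalues $\lambda_k$ to be positive, hence $X(g)>0$, so the two conjectures are genuinely equivalent), and your diagnosis that uniqueness must fail is confirmed by Theorem~\ref{thm_nonuniq}, which exhibits an $SO(3)$-invariant $Y$ with three distinct solutions, two of them not even isometric to the third. So as an assessment of the conjecture your proposal is accurate.

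Two remarks on the existence half, where your sketch diverges from what the paper actually does. First, the continuity method along $Y_t=(1-t)c\,g_0+tY$ is not carried out anywhere in the paper, and the closedness step is precisely where it would founder: the pointwise algebraic recovery of the Ricci eigenvalues from $Y$ controls the curvature of a solution but gives no a priori control of the metric itself (the eigenframe and the conformal scale are both unknowns), and the non-uniqueness examples show that the solution set over a path of data need not be a single smooth branch. The paper instead proves existence only (i)~near the round metric, by an inverse function theorem on the slice $H^{k+2}(\ker\delta)$ combined with diffeomorphism invariance (Section~\ref{sec_round}), and (ii)~under cohomogeneity-one symmetry, by reducing to singular ODE boundary-value problems and applying Brouwer/Leray--Schauder degree theory (Sections~\ref{sec_so2_exist} and~\ref{SectionexistenceSO3}). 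Second, your closing sentence slightly overstates what is salvageable: even \emph{local} uniqueness within the $SO(3)$-invariant class fails for the $Y$ of Theorem~\ref{thm_nonuniq}, so the honest surviving statement is the existence portion of the conjecture alone, which remains open in general.
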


Proposition~\ref{prop_12to02} implies that Conjecture~\ref{rc12} is equivalent to Conjecture~\ref{mainconjecture}. We will prove a series of results that support the existence portion of these conjectures. Afterwards, we will demonstrate non-uniqueness.

\section{Local properties}\label{local}

In this section, we focus on the local variant of the prescribed cross curvature problem. Choose a point~$p\in M$. Our first goal is to solve the equation $X(g)=Y$ in a neighbourhood of $p$ provided that $Y$ is positive-definite. Next, we will prove that $g$ must lie in the H\"older class $C^{k,\alpha}$ with $\alpha\in(0,1)$ if $g$ is $C^3$ and $Y$ lies in~$C^{k,\alpha}$.

Choose a local coordinate system on $\mathbb S^3$ centered at~$p$. Using the standard notation for the components of tensors, we obtain
\begin{align*}
X(g)_{ij}=\det(\mathcal E(g))g_{ik}(\mathcal E(g)^{-1})_j^k.
\end{align*}
It will be convenient for us to fix a background metric $g_0$ on~$\mathbb S^3$. Linearising $X(g)$ near this metric, we find
\begin{align}\label{LXCCO}
     X(g_0+h)_{ij}-X(g_0)_{ij}=\tfrac12g_0^{km}\mathcal E(g_0)_m^l\bigg(
     \frac{\partial^2 h_{ki}}{\partial x_l\partial x_j}&+\frac{\partial^2 h_{kj}}{\partial x_l\partial x_i}-\frac{\partial^2 h_{kl}}{\partial x_i\partial x_j}-\frac{\partial^2 h_{ij}}{\partial x_l\partial x_k}\bigg) \notag
\\
     &+L(h)+G(h,Dh,D^2h);
\end{align}
see~\cite[Proof of Lemma~4]{ChowHam}. In this formula, $Dh$ and $D^2h$ are the arrays of first and second derivatives of the components of $h$ in the local coordinates, and $L$ is a first-order linear differential operator. The smooth map $G$ satisfies
\begin{align*}
G(h,Dh,D^2h)=O\big(|h|^2+|Dh|^2+|D^2h|^2\big),\qquad |h|^2+|Dh|^2+|D^2h|^2\to0,
\end{align*}
where
$$
|h|^2=\sum_{i,j=1}^3h_{ij}^2,\qquad |Dh|^2=\sum_{i,j=1}^3\sum_{k=1}^3\bigg|\frac{\partial h_{ij}}{\partial x_k}\bigg|^2,\qquad |D^2h|^2=\sum_{i,j=1}^3\sum_{k,l=1}^3\bigg|\frac{\partial^2 h_{ij}}{\partial x_k\partial x_l}\bigg|^2.
$$
The main difficulty in proving local existence and regularity comes from the fact that, as we see from~\eqref{LXCCO}, the prescribed cross curvature equation is not elliptic.

\subsection{Existence}

Our first theorem mirrors DeTurck's result on the local existence of metrics with prescribed Ricci curvature; see~\cite{DDT81} and~\cite[Chapter~5]{Besse}.

\begin{theorem}\label{thm_local}
Let $Y$ be a positive-definite symmetric (0,2)-tensor field on~$\mathbb S^3$. There exists a Riemannian metric $g$ on~$\mathbb S^3$ such that $X(g)=Y$ in a neighbourhood of~$p$.
\end{theorem}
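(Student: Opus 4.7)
The plan is to adapt DeTurck's proof of local existence for prescribed Ricci curvature. The obstruction that is immediately visible in~(\ref{LXCCO}) is that $DX(g_0)$ is not elliptic: its principal symbol is annihilated by the gauge directions $h=\mathcal L_Wg_0$, reflecting the diffeomorphism equivariance of $X$. The idea is to add a carefully chosen Lie derivative to the equation (a DeTurck-type gauge term), solve the resulting elliptic system by an implicit function theorem, and then absorb the gauge by a diffeomorphism.

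First I would fix a local coordinate system at $p$ and a background metric $g_0$ with $X(g_0)(p)=Y(p)$; since $X(g_0)(p)$ depends only on the $2$-jet of $g_0$ at $p$, every positive-definite symmetric value is attainable. As in the proof of Proposition~\ref{prop_12to02}, the positivity of $X(g_0)(p)$ forces $\mathcal E(g_0)(p)$ to have eigenvalues of a common sign, so $\mathcal E(g_0)$ is definite, and in particular invertible, throughout a neighbourhood of $p$. Next I would define a DeTurck-type vector field $V(g)$ from the Christoffel difference of $g$ and $g_0$, and consider
\[F(g)=X(g)+\tfrac12\mathcal L_{W(g)}g_0,\]
with $W(g)$ a $g_0$-contraction of $V(g)$ against the Einstein tensor of $g_0$, calibrated so that at the linearisation the three divergence-type second derivatives of $h$ in~(\ref{LXCCO}) are exactly cancelled. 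The remaining principal symbol of $DF(g_0)$ is then $-\tfrac12 g_0^{km}\mathcal E(g_0)_m^l\xi_l\xi_k$ times the identity on symmetric $2$-tensors, which is definite in a neighbourhood of $p$; hence $F$ is quasilinear elliptic there.

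The equation $F(g)=Y$ can then be solved by a standard implicit function theorem argument. Rescaling $x\mapsto\varepsilon x$ and working in H\"older spaces $C^{k+2,\alpha}\to C^{k,\alpha}$ on a fixed ball, for sufficiently small $\varepsilon$ the operator $F$ becomes a small perturbation of a constant-coefficient elliptic operator with invertible linearisation at $g_0$ (on, say, a subspace fixed by Dirichlet conditions); since $F(g_0)(p)=Y(p)$ by construction, the implicit function theorem produces a smooth metric $g$ close to $g_0$ solving $F(g)=Y$ on a small neighbourhood of $p$. To recover a solution of the original equation, I would pull $g$ back by a diffeomorphism whose infinitesimal generator absorbs the gauge term $\tfrac12\mathcal L_{W(g)}g_0$, exactly as in the standard Ricci--DeTurck construction, producing $\tilde g$ with $X(\tilde g)=Y$ on a possibly smaller neighbourhood of $p$.

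The hardest step is the explicit design of $W(g)$: the weight $g_0^{km}\mathcal E(g_0)_m^l$ in~(\ref{LXCCO}) forces the gauge vector field to involve the Einstein tensor of $g_0$, rather than $g_0$ itself as in the Ricci case, and one must verify by direct computation with~(\ref{LXCCO}) that the cancellation of the three non-elliptic terms really occurs, leaving a principal symbol that is a definite scalar acting on the full space of symmetric $2$-tensors (not merely on the trace-free part). Once this gauge is in place, the remaining analytical steps are routine applications of elliptic theory.
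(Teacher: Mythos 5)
Your overall strategy is the same as the paper's: break the diffeomorphism gauge with a DeTurck-type vector field built from a weighted Bianchi/divergence operator, observe that positivity of $Y(p)=X(g_0)(p)$ forces $\mathcal E(g_0)$ to be definite near $p$ so that the surviving principal symbol $-\tfrac12 g_0^{km}\mathcal E(g_0)_m^l\xi_k\xi_l\cdot\mathrm{id}$ is elliptic, and then invoke local solvability of the resulting elliptic system (your rescaling-plus-implicit-function-theorem sketch is essentially the content of the theorem from \cite[Appendix~K]{Besse} that the paper cites). Your symbol computation is correct: the three non-elliptic terms in~\eqref{LXCCO} are exactly $2\delta^*$ of the $\mathcal E(g_0)$-weighted Bianchi operator of $h$, so your choice of $W$ does cancel them; the paper weights by $Y^{-1}$ instead and only gets cancellation at the point $p$, which suffices since ellipticity is open.

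The genuine gap is in your final step. For a \emph{static} equation one cannot ``pull $g$ back by a diffeomorphism whose infinitesimal generator absorbs the gauge term'': if $g$ solves $X(g)+\tfrac12\mathcal L_{W(g)}g_0=Y$ and $\psi$ is any diffeomorphism, then naturality gives $X(\psi^*g)=\psi^*X(g)=\psi^*\big(Y-\tfrac12\mathcal L_{W(g)}g_0\big)$, which is not $Y$; the Lie-derivative correction is only the \emph{infinitesimal} shadow of a pullback, and there is no time parameter along which to integrate $W$ as in the Ricci--DeTurck flow. To close the argument you must either (a) prove a posteriori that $W(g)=0$ for the solution of the modified equation, which is DeTurck's original route and requires the cross-curvature analogue of the contracted Bianchi identity, $(Y^{-1})^{ij}\nabla_iY_{jk}=\tfrac12(Y^{-1})^{ij}\nabla_kY_{ij}$, together with an argument that the induced equation for $W$ has only the trivial solution --- note that your particular $W$ (Christoffel difference contracted with $\mathcal E(g_0)$) is not obviously a quantity that vanishes when $X(g)=Y$, so this would need redesigning; or (b) formulate the gauge-fixed problem from the outset as $X(g_0+h)=\phi_h^*Y$ with $\phi_h=\exp(V(h))$, as the paper does, so that a solution of the modified equation yields $X\big((\phi_h^{-1})^*(g_0+h)\big)=Y$ exactly by equivariance. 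As written, your proof produces a metric solving the gauge-fixed equation but not the original one.
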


\begin{proof}
Given a symmetric $(0,2)$-tensor field~$h$ close to 0 in the $C^0$ norm, define a vector field $V(h)$ in a neighbourhood of~$p$ by setting
$$V^i(h)=(Y^{-1})^{ij}(g_0+h)^{kl}\big(\overline{\nabla}_l h_{kj}-\tfrac{1}{2}\overline{\nabla}_j h_{kl}\big),
$$
where $(Y^{-1})^{ij}$ are the entries of the inverse of the matrix~$(Y_{ij})_{i,j=1}^3$ and $\overline{\nabla}$ is the Levi-Civita connection of~$g_0$. Consider the map $\phi_h(x)=\text{exp}_x(V(h))$. Clearly, it is a diffeomorphism between neighbourhoods of $p$ if $h$ vanishes at $p$ together with its derivatives. We will now prove that the equation $E(h)=0$, where $E(h)=X(g_0+h)-\phi_h^*(Y)$, can be solved for such~$h$.

According to~\eqref{LXCCO} and the well-known formula for the linearisation of a diffeomorphism (see~\cite[Section~5.C]{Besse}),
\begin{align*}
    E(h)&=\tfrac{1}{2}g_0^{km}\mathcal E(g_0)_m^l\bigg(
     \frac{\partial^2 h_{ki}}{\partial x_l\partial x_j}+\frac{\partial^2 h_{kj}}{\partial x_l\partial x_i}-\frac{\partial^2 h_{kl}}{\partial x_i\partial x_j}-\frac{\partial^2 h_{ij}}{\partial x_l\partial x_k}\bigg)
     \\
     &\hphantom{=}~-\tfrac{1}{2}g_0^{kl}\bigg(
     \frac{\partial^2 h_{ki}}{\partial x_l\partial x_j}+\frac{\partial^2 h_{kj}}{\partial x_l\partial x_i}-\frac{\partial^2 h_{kl}}{\partial x_i\partial x_j}\bigg)+L(h)+ O(|h|^2+\left|Dh\right|^2+|D^2 h|^2),
\end{align*}
where $L$ is a linear first-order differential operator. Without loss of generality, assume that $g_0$, $Y$ and $\mathcal{E}(g_0)$ are given by identity matrices at $p$ in our chosen local coordinates; cf.~\cite[Section~5.A]{Besse}. It is then easy to see that the equation $E(h)=0$ is elliptic at $h=0$ near~$p$. Clearly, $X(g_0)=Y$ at~$p$. The existence of a local solution to $E(h)=0$ follows from Theorem~45 in~\cite[Appendix~K]{Besse}. Pulling back by $\phi_h^{-1}$, we obtain $g$ such that $X(g)=Y$.
\end{proof}

\begin{remark}
Because Theorem~\ref{thm_local} is a result of local nature, it holds on any three-dimensional manifold~$M$, not just on~$\mathbb S^3$. Moreover, it seems to admit a generalisation to the case where ${\dim M>3}$. We will not pursue such a generalisation in this paper.
\end{remark}

\subsection{Regularity}

Our next result shows that differentiability of the cross curvature implies that of the metric. We will use it in Section~4 when we study the existence of global solutions to~$X(g)=Y$.

\begin{theorem}\label{regularityxcurvature}
Suppose that $g$ is a $C^3$-differentiable Riemannian metric on $\mathbb S^3$ with positive-definite cross curvature (0,2)-tensor~$X(g)=Y$. If $Y$ lies in the H\"older class~$C^{k,\alpha}$ for some $k\in[3,\infty)\cap\mathbb N$ and $\alpha\in (0,1)$, then so does~$g$.
\end{theorem}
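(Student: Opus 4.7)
The plan is to convert the cross curvature equation into an algebraic (zeroth-order) prescribed Ricci curvature equation, and then run the standard Schauder bootstrap in $g$-harmonic coordinates.

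First, I would solve the cross curvature relation algebraically for $\mathcal{E}(g)$. The identity $X(g)=\det(\mathcal{E}(g))\,g\,\mathcal{E}(g)^{-1}$ (viewed in matrix form) inverts to $\mathcal{E}(g)=\det(\mathcal{E}(g))\,Y^{-1}g$, while taking determinants gives $\det(\mathcal{E}(g))^{2}=\det(Y)/\det(g)$. The sign of $\det(\mathcal{E}(g))$ is pinned down on $\mathbb{S}^3$ exactly as in the proof of Proposition~\ref{prop_12to02}: positive-definiteness of $Y$ forces the eigenvalues of $\mathcal{E}(g)$ to share a common sign, and the Cartan--Hadamard theorem rules out the wrong one. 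Thus $X(g)=Y$ is equivalent to the algebraic identity $\mathcal{E}(g)=-\sqrt{\det(Y)/\det(g)}\,Y^{-1}g=:\Phi(g,Y)$, and in turn, since $\Ric(g)=\mathcal{E}(g)^{\flat}-\tr(\mathcal{E}(g))\,g$ in three dimensions, to
\[
\Ric(g)=\Phi(g,Y)^{\flat}-\tr(\Phi(g,Y))\,g=:F(g,Y),
\]
a prescribed Ricci equation whose right-hand side involves no derivatives of $g$ or $Y$.

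Next, I would work locally around any point and pass to $g$-harmonic coordinates, which can be produced of class $C^{3,\alpha}$ from $g\in C^{3}$ by solving $\Delta_g x^i=0$ with standard Schauder theory. In such a chart one has the classical identity $\Ric(g)_{ij}=-\tfrac{1}{2}g^{kl}\partial_k\partial_l g_{ij}+Q(g,\partial g)_{ij}$ with $Q$ quadratic in $\partial g$, so the equation becomes the quasilinear elliptic system
\[
g^{kl}\partial_k\partial_l g_{ij}=2Q(g,\partial g)_{ij}-2F(g,Y)_{ij}.
\]

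The final step is a standard Schauder bootstrap. Starting from $g\in C^{2,\alpha}$ and $Y\in C^{k,\alpha}$, the right-hand side lies in $C^{1,\alpha}$, and interior Schauder estimates deliver $g\in C^{3,\alpha}$; each subsequent iteration gains one derivative until $g\in C^{k,\alpha}$ in the harmonic chart. Since the harmonic coordinate map itself gains regularity in tandem with $g$ (being determined by $g$ via an elliptic equation whose coefficients improve at each stage), the improved regularity transfers back to the original coordinates on $\mathbb{S}^3$. The main technical obstacle is the bookkeeping in this simultaneous boost of metric and chart, but no new analytic difficulty appears beyond the classical DeTurck--Kazdan treatment of the prescribed Ricci curvature problem~\cite{DeTurckKazdan}.
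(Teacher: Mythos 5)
Your opening reduction is correct and is genuinely different from (and cleaner than) what the paper does: inverting $X(g)=\det(\mathcal E(g))(\mathcal E(g)^{-1})^\flat$ and fixing the sign of $\det(\mathcal E(g))$ via the Cartan--Hadamard argument from Proposition~\ref{prop_12to02} does turn $X(g)=Y$ into the zeroth-order prescribed Ricci equation $\Ric(g)=F(g,Y)$. The gap is in the second half: the harmonic-coordinate bootstrap stalls after one step, and the claimed ``simultaneous boost of metric and chart'' cannot be carried out. The hypothesis $Y\in C^{k,\alpha}$ is stated in the fixed smooth atlas of $\mathbb S^3$. A $g$-harmonic chart built from a $C^3$ metric is only a $C^{3,\alpha}$ function of the original coordinates, so in that chart the components of $Y$ are only $C^{2,\alpha}$, whence $F(g,Y)\in C^{2,\alpha}$ there no matter how regular $Y$ is in the original chart. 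Your iteration therefore terminates at $g\in C^{4,\alpha}$ \emph{in the harmonic chart}, which, read back through the $C^{3,\alpha}$ transition map, gives nothing beyond the hypothesis $g\in C^{2,\alpha}$ in the original coordinates --- not even the case $k=3$ of the theorem is reached.

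The chart cannot be upgraded ``in tandem'': the transition functions solve $\Delta_g u^i=0$, i.e.\ $g^{kl}\partial_k\partial_l u^i=g^{kl}\Gamma^m_{kl}\,\partial_m u^i$ in the \emph{original} coordinates, and the coefficient $g^{kl}\Gamma^m_{kl}$ has exactly the regularity of $Dg$ in the original chart --- which is the unknown quantity. Improving $g$ in harmonic coordinates gives no new information about this coefficient, so the Schauder estimate for $u^i$ never improves and the circle does not close. This is precisely why DeTurck--Kazdan~\cite{DeTurckKazdan}, for regularity in an \emph{arbitrary given} chart (as opposed to regularity in harmonic coordinates), abandon harmonic coordinates and instead use the contracted second Bianchi identity, which --- because the prescribed tensor is invertible --- expresses a contraction of the Christoffel symbols in terms of $g$, $Y$ and $DY$ and thereby supplies an elliptic gauge-fixing valid in the given coordinates. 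The paper's proof runs exactly this Bianchi-gauge argument, using the Bianchi-type identity for the cross curvature tensor from~\cite{ChowHam}, and then bootstraps the resulting quasilinear elliptic system. Your reduction to $\Ric(g)=F(g,Y)$ could in principle be completed the same way, but some substitute for the harmonic gauge that lives in the original coordinates is indispensable.
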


\begin{proof}
Choose a $g$-normal coordinate system centered at $p\in\mathbb S^3$. Let the background metric $g_0$ be such that its component $g_{0ij}$ near $p$ is the Taylor polynomial of degree 2 of $g_{ij}$ at $p$ for all $i,j=1,2,3$. Then $g=g_0+h$ for some (0,2)-tensor field $h$ vanishing at~$p$ together with its first and second derivatives. Denote by $\det(Y)$ the determinant of the matrix~$(Y_{ij})_{i,j=1}^3$. Formula~\eqref{DefnXC'} shows that
$$
\det(\mathcal E(g_0))=\pm\sqrt{\det(Y)}\qquad\mbox{and}\qquad g_0^{km}\mathcal E(g_0)_m^l=\pm\sqrt{\det(Y)}(Y^{-1})^{kl}
$$
at~$p$. We assume for the rest of the proof that the square roots on the right-hand sides appear with pluses. The argument is similar in the other case.

The second Bianchi identity implies
\begin{align*}
(Y^{-1})^{ij}\nabla_i Y_{jk}=\tfrac{1}{2}(Y^{-1})^{ij}\nabla_k Y_{ij},
\end{align*}
where $\nabla$ the Levi--Civita connection of~$g$; see~\cite[Lemma~1]{ChowHam}. Using the standard notation $\Gamma_{ij}^k$ for the Christoffel symbols of~$g$, we find
\begin{align*}
(Y^{-1})^{ij}\Gamma_{ij}^l&=(Y^{-1})^{ij}(Y^{-1})^{kl}\Gamma_{ij}^{m}Y_{mk}
\\
&=(Y^{-1})^{ij}(Y^{-1})^{kl}(\Gamma_{ij}^{m}Y_{mk}+\Gamma_{ik}^{m}Y_{jm})-\tfrac{1}{2}(Y^{-1})^{ij}(Y^{-1})^{kl}(\Gamma_{ki}^{m}Y_{jm}+\Gamma_{ki}^{m}Y_{mj})\\
&=(Y^{-1})^{ij}(Y^{-1})^{kl}\Big(\frac{\partial}{\partial x_i}Y_{jk}-\nabla_i Y_{jk}\Big)-\tfrac{1}{2}(Y^{-1})^{ij}(Y^{-1})^{kl}\Big(\frac{\partial}{\partial x_k}Y_{ij}-\nabla_k Y_{ij}\Big)\\
&=F_l(g,Y,DY),
\end{align*}
where $F_l$ is a smooth map for every $l=1,2,3$. Differentiation with respect to $x_r$ yields
\begin{align*}
(Y^{-1})^{ij}g^{lm}\left(2\frac{\partial^2 h_{im}}{\partial x_j\partial x_r}-\frac{\partial^2 h_{ij}}{\partial x_m\partial x_r}\right)=\hat{F}_{lr}(h,Dh,Y,DY,D^2 Y).
\end{align*}
Here, $\hat F_{lr}$ is a smooth map for all $l,r=1,2,3$. 
In light of~\eqref{LXCCO},
\begin{align}\label{big_Bianchi_eq}
g_0^{km}&\mathcal E(g_0)_m^l 
\bigg(\frac{\partial^2 h_{ki}}{\partial x_l\partial x_j}+\frac{\partial^2 h_{kj}}{\partial x_l\partial x_i}-\frac{\partial^2 h_{kl}}{\partial x_i\partial x_j}-\frac{\partial^2 h_{ij}}{\partial x_l\partial x_k}\bigg) \notag
\\
&\hphantom{=}~-
\sqrt{\det(Y)}(Y^{-1})^{kl}g^{mi}\left(\frac{\partial^2 h_{km}}{\partial x_l\partial x_j}-\frac{\partial^2 h_{kl}}{2\partial x_m\partial x_j}\right) \notag
\\
&\hphantom{=}~-
\sqrt{\det(Y)}
(Y^{-1})^{kl}g^{mj}\left(\frac{\partial^2 h_{km}}{\partial x_l\partial x_i}-\frac{\partial^2 h_{kl}}{2\partial x_m\partial x_i}\right) \notag
\\
&=2Y-2X(g_0)-2L(h)-2G(h,Dh,D^2h)-\tfrac12\sqrt{\det(Y)}(\hat{F}_{ij}+\hat{F}_{ji})(h,Dh,Y,DY,D^2 Y).
\end{align}
At the point~$p$, this becomes
\begin{align*}
\sqrt{\det(Y)}(Y^{-1})^{kl}\frac{\partial^2 h_{ij}}{\partial x_l\partial x_k}&=-2G(0,0,0)
\\
&\hphantom{=}~-\tfrac12\sqrt{\det(Y)}(\hat{F}_{ij}+\hat{F}_{ji})(0,0,Y,DY,D^2Y).
\end{align*}
Consequently, in a neighbourhood of~$p$,~\eqref{big_Bianchi_eq} is an elliptic system of equations for~$(h_{ij})_{i,j=1}^3$. We differentiate each of these equations in $x_q$ for $q=1,2,3$. As a result, we obtain a quasilinear elliptic system for $\big(\frac{\partial h_{ij}}{\partial x_q}\big)_{i,j=1}^3$. Well-known regularity properties of such systems (see, e.g.,~\cite[Section~8.5]{LS68}) and a standard bootstrapping argument imply the assertion of the theorem.
\end{proof}

\begin{remark}
Theorem~\ref{regularityxcurvature} is a local result, just like Theorem~\ref{thm_local}. It admits a generalisation to an arbitrary manifold of dimension~3 and possibly higher.
\end{remark}

\section{Existence near the round metric}\label{sec_round}

In order to continue our study of metrics with prescribed cross curvature, we need to compute the linearisation of $X(g)$ at a round metric~$g_0$. It will be convenient for us to assume that $\Ric(g_0)=2g_0$. However, the scaling properties of the cross curvature imply that this assumption is not essential. In what follows, we use the standard notation $S^2T^*\mathbb S^3$ for the bundle of symmetric (0,2)-tensors on~$\mathbb S^3$ and $\Gamma(S^2T^*\mathbb S^3)$ for the space of its smooth sections.

\begin{theorem}\label{crosscurvaturelinearisation}
The linearisation $X'_{g_0}:\Gamma(S^2T^*\mathbb S^3)\to\Gamma(S^2T^*\mathbb S^3)$ at the round metric $g_0$ of the map $g\mapsto X(g)$ is given by the formula
\begin{align*}
    X'_{g_0}h=-\tfrac{1}{2}\Delta_Lh-\delta^*\delta G(h)-h,
\end{align*}
where $\Delta_L$ is the Lichnerowicz Laplacian, $\delta$ is the divergence operator, $\delta^*$ is the symmetrised covariant derivative, and $G$ is the gravitation tensor, all computed with respect to~$g_0$. 
\end{theorem}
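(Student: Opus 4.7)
The strategy is to compute $X'_{g_0}$ by direct differentiation of the definition and then reduce the answer to the known linearisation of Ricci curvature. Normalising so that $\Ric(g_0) = 2g_0$ gives $S(g_0) = 6$, $\Ein(g_0) = -g_0$, and hence $\mathcal{E}(g_0) = -\operatorname{Id}$ and $\det \mathcal{E}(g_0) = -1$. My first target would be the clean intermediate identity
$$X'_{g_0}h = -h + \Ric'_{g_0}h,$$
after which the conclusion follows by substituting a standard formula for $\Ric'_{g_0}h$ (cf.~Besse, Theorem~1.174) in the sign convention for the Lichnerowicz Laplacian used in the paper.

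To derive the intermediate identity, I would apply the product rule to the defining expression $X(g)_{ij} = \det(\mathcal{E}(g))\,g_{ik}(\mathcal{E}(g)^{-1})^k_j$, using Jacobi's formula $(\det\mathcal{E})'h = \det(\mathcal{E})\tr(\mathcal{E}^{-1}\mathcal{E}'h)$ together with $(\mathcal{E}^{-1})'h = -\mathcal{E}^{-1}(\mathcal{E}'h)\mathcal{E}^{-1}$. Because $\mathcal{E}_0 = -\operatorname{Id}$, the three resulting contributions collapse at $g_0$ to
$$X'_{g_0}h = -\tr(\mathcal{E}'_{g_0}h)\,g_0 + h + (\mathcal{E}'_{g_0}h)^\flat,$$
where the middle $+h$ arises solely from the variation of the factor $g_{ik}$ combined with the raising/lowering operations on $\mathcal{E}^{-1}$. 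I would then compute $\mathcal{E}'_{g_0}h$ starting from $\mathcal{E}^i_j = g^{ik}\Ric_{kj} - \tfrac{S}{2}\delta^i_j$; using $\Ric(g_0) = 2g_0$, this yields
$$(\mathcal{E}'_{g_0}h)^\flat = -2h + \Ric'_{g_0}h - \tfrac{1}{2}(S'_{g_0}h)\,g_0,\qquad \tr(\mathcal{E}'_{g_0}h) = -\tfrac{1}{2}S'_{g_0}h,$$
the trace identity following from the elementary relation $\tr(\Ric'_{g_0}h) = S'_{g_0}h + 2\tr(h)$ obtained by differentiating $S = g^{ij}\Ric_{ij}$ at $g_0$. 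Substituting back, the two $\tfrac{1}{2}(S'_{g_0}h)\,g_0$ contributions cancel exactly, and a direct count of $h$-terms gives $X'_{g_0}h = -h + \Ric'_{g_0}h$.

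The main obstacle is bookkeeping rather than ideas: several sign-sensitive ingredients --- the three occurrences of $\mathcal{E}_0 = -\operatorname{Id}$, the opposite signs picked up by $g_{ik}$ and $g^{ik}$ under variation, and the sign conventions for $\delta$, $\delta^*$, $G$, and $\Delta_L$ --- must all align for the scalar-curvature contributions to disappear. The underlying reason for the clean answer is that $g_0$ is Einstein with $\Ric(g_0) = 2g_0$; on a non-Einstein background no such collapse would occur, and the expression for $X'_{g_0}h$ would be substantially more involved.
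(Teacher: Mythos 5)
Your proposal is correct and follows essentially the same route as the paper: differentiate $X(g)=\det(\mathcal E(g))(\mathcal E(g)^{-1})^\flat$ by the product rule, apply Jacobi's formula, use $\mathcal E(g_0)=-\operatorname{Id}$ to collapse the three terms, compute $\mathcal E'_{g_0}h$ from the variations of $\Ric$, $S$ and the musical isomorphisms so that the scalar-curvature contributions cancel, arrive at the intermediate identity $X'_{g_0}h=\Ric'_{g_0}h-h$, and finish with the standard formula for $\Ric'_{g_0}$. The only differences are notational (index calculus versus the paper's use of $\mathcal R(g)=\Ric(g)^\sharp$), and all your sign checks agree with the paper's.
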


\begin{proof}
Let us begin by expressing $X'_{g_0}(h)$ in terms of the linearisations of the maps $g\mapsto\mathcal E(g)$ and $g\mapsto\det(\mathcal E(g))$ at~$g_0$. The definition of $X(g)$ implies
\begin{align*}
    X'_{g_0}(h)=(\det(\mathcal{E}))_{g_0}'(h) g_0(\mathcal{E}(g_0)^{-1}\cdot,\cdot)&+\det(\mathcal{E}(g_0))h(\mathcal{E}(g_0)^{-1}\cdot,\cdot) \\ &-\det(\mathcal{E}(g_0))g_0(\mathcal{E}(g_0)^{-1}\mathcal{E}'_{g_0}(h)\mathcal{E}(g_0)^{-1}\cdot,\cdot).
\end{align*}
Jacobi's formula for the derivative of the determinant yields
\begin{align*}
    (\det(\mathcal{E}))_{g_0}'(h)= \det(\mathcal{E}(g_0))\tr(\mathcal{E}(g_0)^{-1}\mathcal{E}_{g_0}'(h)).
\end{align*}
Since $\text{Ric}(g_0)=2g_0$, the Einstein tensor $\mathcal{E}(g_0)$ is the negative of the identity. Consequently,
\begin{align*}
(\det(\mathcal{E}))_{g_0}'(h)= \tr\mathcal{E}_{g_0}'(h).
\end{align*}
Denoting $\mathcal{R}(g)=\text{Ric}(g)^{\sharp}$ and differentiating the equality $\text{Ric}(g)(\cdot,\cdot)=g(\mathcal{R}(g)\cdot,\cdot)$, we obtain
\begin{align*}
    \text{Ric}'_{g_0}(h)&=h(\mathcal{R}(g_0)\cdot,\cdot)+g_0(\mathcal{R}'_{g_0}(h)\cdot,\cdot), \\
     S_{g_0}'(h)&=\tr_{g_0}\text{Ric}'_{g_0}(h)-\tr_{g_0}h(\mathcal{R}(g_0)\cdot,\cdot).
\end{align*}
Therefore, 
\begin{align*}
    g_0(\mathcal{E}'_{g_0}(h)\cdot,\cdot)=
     \text{Ric}'_{g_0}(h)- h(\mathcal{R}(g_0)\cdot,\cdot)-\frac{\tr_{g_0}\text{Ric}'_{g_0}(h)}{2}g_0+\frac{\tr_{g_0}h(\mathcal{R}(g_0)\cdot,\cdot)}{2}g_0.
\end{align*}
The equality $\text{Ric}(g_0)=2g_0$ implies that $\mathcal{R}(g_0)$ is twice the identity, which means
\begin{align*}
    g_0(\mathcal{E}'_{g_0}(h)\cdot,\cdot)=
     \text{Ric}'_{g_0}(h)-\frac{\tr_{g_0}\text{Ric}'_{g_0}(h)}{2}g_0-2h+(\tr_{g_0}h)g_0.
\end{align*}
From here, we find
\begin{align*}
    X'_{g_0}(h)&=-((\det\mathcal{E})_{g_0}'(h)) g_0+g_0(\mathcal{E}'_{g_0}(h)\cdot,\cdot)+h
    \\ &=-(\tr_{g_0}g_0(\mathcal E_{g_0}'(h)\cdot,\cdot))g_0+g_0(\mathcal{E}'_{g_0}(h)\cdot,\cdot)+h=\Ric'_{g_0}(h)-h.
\end{align*}
Using the well-known formula
\begin{align*}
\text{Ric}'_{g_0}(h)=-\frac{1}{2}\Delta_Lh-\delta^*\delta G(h)
\end{align*}
(see, e.g.,~\cite[Proposition~2.3.7]{Topping}) completes the proof.
\end{proof}

Next, we state our first global existence result. Essentially, it shows that the prescribed cross curvature problem admits a solution if the cross curvature candidate lies near the round metric~$g_0$. Given a positive integer $k$, define
\begin{align*}
\langle h_1,h_2\rangle_k=\bigg(\int_{\mathbb S^3}g_0\big((C(k)I-\Delta_{L})^{\frac k2}h_1,(C(k)I-\Delta_{L})^{\frac k2}h_2\big)^2\,d\mu(g_0)\bigg)^{\frac12}
\end{align*}
for $h_1,h_2\in\Gamma(S^2T^*\mathbb S^3)$. Here $I$ is the identity operator. The constant $C(k)>0$ is chosen so that $\langle\cdot,\cdot\rangle_k$ is an inner product on~$\Gamma(S^2T^*\mathbb S^3)$ with the corresponding norm equivalent to the usual Sobolev $H^k$-norm; see~\cite[Lemma 2.1]{ButtsworthHallgren}. The Lichnerowicz Laplacian $\Delta_{L}$ is taken with respect to the metric~$g_0$. Given a tensor bundle $V$ over~$\mathbb S^3$, denote by $H^k(V)$ the completion of $\Gamma(V)$ in the Sobolev $H^k$-norm induced by~$g_0$. 

\begin{theorem}
Consider a symmetric (0,2)-tensor field $Y$ on~$\mathbb S^3$. Given an integer $k\ge5$, if $$\|Y-g_0\|_{H^k}<\epsilon(k)$$ for a sufficiently small $\epsilon(k)>0$, then there exists a Riemannian metric~$g$ such that $X(g)=Y$.
\end{theorem}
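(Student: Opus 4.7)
The plan is to apply an inverse function theorem in Sobolev spaces after replacing the non-elliptic equation $X(g) = Y$ by an elliptic one via a DeTurck-type diffeomorphism gauge. A first observation is that, since $\Ric(g_0) = 2 g_0$, one computes $\mathcal E(g_0) = -I$, $\det \mathcal E(g_0) = -1$, and hence $X(g_0) = g_0$. Thus the round metric solves $X(g) = Y$ when $Y = g_0$, and the task is to continue this trivial solution to nearby $Y$.

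Mirroring the construction in the proof of Theorem~\ref{thm_local}, for small $h \in H^k(S^2 T^*\mathbb S^3)$ and $Y$ near $g_0$ in $H^k$, I would introduce the vector field
\begin{align*}
V^i(h,Y) = c\, (Y^{-1})^{ij} (g_0+h)^{kl} \big(\overline\nabla_l h_{kj} - \tfrac12 \overline\nabla_j h_{kl}\big)
\end{align*}
with a scalar $c$ to be chosen, the diffeomorphism $\phi_{h,Y}(x) = \exp_x(V(h,Y))$, and the map
\begin{align*}
F(h,Y) := X(g_0+h) - \phi_{h,Y}^* Y,
\end{align*}
regarded as a smooth map from a neighbourhood of $(0, g_0)$ in $H^k \times H^k$ into $H^{k-2}$. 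The threshold $k \ge 5$ provides the Sobolev embedding $H^k \hookrightarrow C^3$ on $\mathbb S^3$ (needed to invoke Theorem~\ref{regularityxcurvature} at the end) and makes $H^k$ a Banach algebra, so that the matrix inversions, compositions and exponential map entering $F$ indeed yield a smooth map on the Sobolev scale.

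The central computation is the linearisation $\partial_h F(0,g_0)$. Because $V(0, g_0) = 0$ and $\phi_{0,g_0} = \mathrm{id}$, differentiating the pull-back in $h$ produces $\mathcal L_{V'(0,g_0)(h)} g_0 = 2\delta^*(V'(0,g_0)(h))^\flat$, which is a scalar multiple of $\delta^*\delta G(h)$. Calibrating $c$ so that this multiple cancels the $-\delta^*\delta G(h)$ term from Theorem~\ref{crosscurvaturelinearisation}, one is left with
\begin{align*}
\partial_h F(0, g_0)(h) = -\tfrac12 \Delta_L h - h,
\end{align*}
an elliptic, formally self-adjoint operator on $\Gamma(S^2 T^*\mathbb S^3)$. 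The main obstacle is to verify that it is an isomorphism $H^k \to H^{k-2}$, equivalently that $-2$ does not lie in the spectrum of $\Delta_L$ on symmetric $(0,2)$-tensors over the round $\mathbb S^3$. I would decompose $S^2 T^*\mathbb S^3 = \mathbb R g_0 \oplus \delta^*(\Omega^1) \oplus (\mathrm{TT})$: on the trace summand $\Delta_L(fg_0) = (\Delta f) g_0$, whose eigenvalues $-j(j+2)$ for $j \ge 0$ never equal $-2$; the longitudinal and transverse-traceless spectra on the round sphere are standard and can be inspected to rule out $-2$ as well. This spectral bookkeeping is where the bulk of the technical effort resides.

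Once invertibility is established, the inverse function theorem delivers an $\epsilon(k) > 0$ and a smooth map $Y \mapsto h(Y) \in H^k$, defined for $\|Y - g_0\|_{H^k} < \epsilon(k)$, such that $F(h(Y), Y) = 0$. By the diffeomorphism equivariance of $X$, the metric $g := (\phi_{h(Y), Y}^{-1})^* (g_0 + h(Y))$ satisfies $X(g) = Y$. A priori $g$ lies only in $H^k \hookrightarrow C^3$, but Theorem~\ref{regularityxcurvature} together with a standard bootstrap then upgrade $g$ to the regularity class of $Y$, in particular recovering $g \in C^\infty$ whenever $Y$ is smooth.
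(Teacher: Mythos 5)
Your route is correct in outline but genuinely different from the paper's. The paper does not gauge-fix at all: it keeps $F(h)=X(g_0+h)$, restricts to the Berger--Ebin slice $H^{k+2}(\ker\delta)$, shows $F_0'$ is injective with closed image there (using that $-2$ is not an eigenvalue of $\Delta_L$ on \emph{divergence-free} tensors, quoted from Boucetta), proves that $F_0'(H^{k+2}(\ker\delta))$ complements the diffeomorphism directions $\delta^*(H^{k+1}(T^*M))$, and then applies DeTurck's slice lemma to move $Y$ into the image of $F$ by a diffeomorphism. You instead run the DeTurck trick on the equation itself and reduce everything to the invertibility of $-\tfrac12\Delta_L-I$ on \emph{all} of $S^2T^*\mathbb S^3$. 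That stronger spectral fact is true and you should prove it rather than call it bookkeeping, since it is the crux; it is also quicker than inspecting the longitudinal and TT spectra separately: if $\Delta_Lh=-2h$, then taking traces gives $\Delta(\tr h)=-2\tr h$, which forces $\tr h=0$ because the function spectrum on the unit $\mathbb S^3$ is $\{-j(j+2)\}$; on trace-free tensors the zeroth-order part of $\Delta_L$ is $2\mathring{R}-2\Ric\circ{}=-6\,\mathrm{id}$, so $\Delta_Lh=-2h$ would make the rough Laplacian have eigenvalue $4>0$, which is impossible. What your approach buys is a genuine implicit function theorem (hence local uniqueness of $h$ in the gauge) and no need for the Berger--Ebin splitting or DeTurck's Lemma~2.3; what it costs is care with the Ebin--Marsden loss of derivatives in $(h,Y)\mapsto\phi_{h,Y}^*Y$, which is only smooth because you allow the target to drop to $H^{k-2}$.

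There is, however, a concrete error in your final step: $g=(\phi_{h,Y}^{-1})^*(g_0+h)$ does \emph{not} lie in $H^k$. The field $V(h,Y)$ costs one derivative of $h$, so $\phi_{h,Y}$ is only an $H^{k-1}$-diffeomorphism, and the pull-back costs another derivative, so $g\in H^{k-2}\hookrightarrow C^{k-4,1/2}$. For $k=5,6$ this is not $C^3$, and Theorem~\ref{regularityxcurvature} cannot be invoked as you state. This is precisely why the paper arranges its map to go from $H^{k+2}$ to $H^k$: the solution $h$ then lies in $H^{k+2}$, the diffeomorphism in $H^{k+1}$, and the pulled-back metric in a space embedding into $C^3$ for $k\ge5$. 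Either shift your indices the same way, or—more in the spirit of your construction—get smoothness of $h$ directly by elliptic regularity for the gauged equation $F(h,Y)=0$ (its linearisation at the solution is a small perturbation of $-\tfrac12\Delta_L-I$, hence elliptic), which makes $V$, $\phi_{h,Y}$ and therefore $g$ smooth and bypasses Theorem~\ref{regularityxcurvature} entirely.
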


An analogue of this theorem for the cross curvature $(1,2)$-tensor was conjectured, though not proven, in~\cite{IG08}.

\begin{proof}
Denote by $\mathfrak X$ and $\mathfrak Y$ the completions of $\Gamma(S^2T^*\mathbb S^3)$ with respect to $\langle\cdot,\cdot\rangle_{k+2}$ and $\langle\cdot,\cdot\rangle_k$, respectively. If $l$ is an integer greater than~1, the Sobolev space $H^l(\mathbb{R}^3)$ is a Banach algebra. Consequently, a contraction of two tensor fields from $H^{k+m}(S^2T^*\mathbb{S}^3)$ lies in $H^{k+m}(S^2T^*\mathbb{S}^3)$ for each $m\in\mathbb N\cup\{0\}$. Also, $H^l(\mathbb{R}^3)$ is embedded continuously into the H\"older space~$C^{l-2,\frac12}(\mathbb{R}^3)$. These observations imply that, for some neighbourhood $U$ of 0 in~$\mathfrak X$, the map $F:U\to \mathfrak Y$ given by
$$
F(h)=X(g_0+h),\qquad h\in U,
$$
is smooth. By Theorem~\ref{crosscurvaturelinearisation}, the Fr\'echet derivative of $F$ at $h=0$ satisfies
\begin{equation}\label{FDXC}
F'_0f=-\tfrac{1}{2}\Delta_Lf-\delta^*\delta G(f)-f,\qquad f\in\mathfrak X.
\end{equation}
Our plan is to show that $F$ is invertible near $h=0$. To do so, we need to split $\mathfrak X$ and $\mathfrak Y$ into orthogonal sums.

The space $H^{k}(\ker\delta)$ is closed in~$\mathfrak Y$. The image $\delta^* (H^{k+1}(T^*M))$ is orthogonal to~$H^{k}(\ker\delta)$ in~$\mathfrak Y$. Indeed, if $h\in H^{k}(\text{ker}\delta)$ is smooth and $v$ lies in~$H^{k+1}(T^*M)$, then
\begin{align*}
\langle h,\delta^*v\rangle_k&=\int_{\mathbb{S}^3} g_0((C(k)-\Delta_L)^kh, \delta^*v)d\mu(g_0)=\int_{\mathbb{S}^3}g_0(\delta(C(k)-\Delta_L)^kh, v)d\mu(g_0).
\end{align*}
Since $\ker\delta$ is invariant under $\Delta_L$, this must equal~0. Using~\cite[Theorem~4.1]{BE}, we conclude that $\mathfrak Y$ splits into the orthogonal sum
\begin{align*}
\mathfrak Y&=H^k(\ker\delta)\oplus\delta^*(H^{k+1}(T^*M)).
\end{align*}
Analogous arguments show that
\begin{align}\label{Xsplit}
\mathfrak X=H^{k+2}(\ker\delta)\oplus\delta^*(H^{k+3}(T^*M)).
\end{align}
To prove the invertibility of $F$, we will apply the inverse function theorem to $F|_{H^{k+2}(\ker\delta)}$ and then take advantage of the diffeomorphism-invariance of the cross curvature.

Formulas~\eqref{FDXC} and~\eqref{Xsplit} imply that any $f\in H^{k+2}(\ker\delta)$ satisfying $F'_0f=0$ is either~0 or an eigenvector of $\Delta_L$ with eigenvalue~$-2$. According to~\cite[Theorem~3.2]{Boucetta99}, the latter is impossible. This means $F_0'|_{H^{k+2}(\ker\delta)}$ has trivial kernel. Moreover, the image $F_0'(H^{k+2}(\ker\delta))$ is closed in~$\mathfrak Y$. To see this, take a sequence $(f_n)_{n=1}^{\infty}\subset H^{k+2}(\ker \delta)$ such that 
$$\lim_{n\to\infty}F_0'(f_n)=y\in\mathfrak Y.$$
Projecting onto $H^k(\ker\delta)$, we conclude that $-\frac{1}{2}\Delta_Lf_n-f_n$ converges in~$\mathfrak Y$. The ellipticity of the operator $-\frac{1}{2}\Delta_L-I$ then implies the existence of $f\in H^{k+2}(\ker\delta)$ such that $\lim_{n\to \infty}f_n=f$ in $H^{k+2}(\ker\delta)$; cf.~\cite[Theorem~3.9]{Cantor}. It becomes clear that $F_0'(f)=y$. Thus, $F_0'(H^{k+2}(\ker\delta))$ is closed. By the inverse function theorem, there exists a neighbourhood $V$ of $0$ in $H^{k+2}(\ker\delta)$ such that the image $F(V)$ is a $C^1$-differentiable Banach submanifold of~$\mathfrak Y$. The tangent space of this submanifold at $g_0=F(0)$ is $F_0'(H^{k+2}(\ker\delta))$. Our next step is to show that $F_0'(H^{k+2}(\ker\delta))$ is linearly complementing to $\delta^* (H^{k+1}(T^*M))$ in~$\mathfrak Y$. We will then use this fact together with the diffeomorphism invariance of the cross curvature to complete the proof.

Given $y\in\mathfrak Y$, there exist $h\in H^k(\ker\delta)$ and $v\in H^{k+1}(T^*M)$ such that $y=h+\delta^*v$. The ellipticity of $-\frac{1}{2}\Delta_L-I$ and the absence of $-2$ among the eigenvalues of $\Delta_L$ imply that 
\begin{align*}
h=-\tfrac{1}{2}\Delta_Lu-u
\end{align*}
for some $u\in\mathfrak X$; see~\cite[Theorem~3.12]{Cantor}. By~\eqref{Xsplit}, we may assume that $u$ lies in~$H^{k+2}(\ker\delta)$. Clearly,
\begin{align*}
y=h+\delta v^*&=-\tfrac{1}{2}\Delta_Lu-u+\delta^*v \\
&=-\tfrac{1}{2}\Delta_Lu-\delta^*\delta G(u)-u+\delta^*(v+\delta G(u))=F_0'(u)+\delta^*(v+\delta G(u)).
\end{align*}
We conclude that $F_0'(H^{k+2}(\ker\delta))$ is linearly complementing to $\delta^* (H^{k+1}(T^*M))$ in~$\mathfrak Y$. The latter space is tangent to the orbit of the group of $H^{k+1}$-diffeomorphisms at~$g_0$. Therefore, every vector close to 0 in~$\mathfrak Y$ can be brought into $F_0'(\mathfrak X)$ by the action of this group. More precisely, by~\cite[Lemma~2.3]{DeTurck85}, if $Y$ is close to $g_0$ in~$\mathfrak Y$, there is an $H^{k+1}$-diffeomophism $\phi$ such that $\phi^*(Y)=F(h)$ for some $h\in H^{k+2}(\ker\delta)$. Using the definition of~$F$, we find
\begin{align*}
X\big(\big(\phi^{-1}\big)^*(g_0+h)\big)=Y.
\end{align*}
The inequality $k\ge5$ and the Sobolev embedding theorem imply that $\phi^{-1}\in C^4$ and $h\in C^5$, which means $\big(\phi^{-1}\big)^*(g_0+h)\in C^3$. By Theorem~\ref{regularityxcurvature}, this metric must be smooth.
\end{proof}

\section{Existence for $SO(2)\times SO(2)$-invariant metrics}\label{sec_so2_exist}

The natural action of $SO(2)\times SO(2)$ on $\mathbb R^4=\mathbb{R}^2\times \mathbb{R}^2$ and the standard embedding $\mathbb S^3\hookrightarrow\mathbb R^4$ define a cohomogeneity one action of $SO(2)\times SO(2)$ on~$\mathbb S^3$. The principal orbits of this action are tori $\mathbb{S}^1\times \mathbb{S}^1$; the  two singular orbits are both copies of $\mathbb{S}^1$. 
Suppose $Y$ is a symmetric positive-definite $SO(2)\times SO(2)$-invariant tensor field on~$\mathbb S^3$. One may view $Y$ as a Riemannian metric on~$\mathbb S^3$. We fix a $Y$-geodesic $\gamma:[0,1]\to\mathbb S^3$ connecting the two singular orbits of the $SO(2)\times SO(2)$ action. This gives us a natural diffeomorphism between the principal part of $\mathbb S^3$ and the product $(0,1)\times \mathbb{S}^1\times \mathbb{S}^1$. We choose coordinates $\theta_1,\theta_2\in[0,2\pi)$ on the two $\mathbb S^1$ factors so that $\theta_1=\theta_2=0$ for the points on~$\gamma$. There exist a constant $y_{0}>0$ and smooth functions $y_{1}$, $y_{2}$ and $y_{12}$ on $(0,1)$ such that
\begin{align*}
    Y=y_0^2dr\otimes dr+y_1(r)^2d\theta_1\otimes d\theta_1+y_{2}(r)^2d\theta_2\otimes d\theta_2+y_{12}(r)(d\theta_1\otimes d\theta_2&+d\theta_2\otimes d\theta_1), \\ r&\in(0,1).
\end{align*}
Scaling $Y$ if necessary, we may assume that $y_{0}=1$. We also suppose $Y$ is diagonal, i.e., $y_{12}$ is identically~0. Thus,
\begin{align}\label{Y_form}
    Y=dr\otimes dr+y_{1}(r)^2d\theta_1\otimes d\theta_1+y_{2}(r)^2d\theta_2\otimes d\theta_2,\qquad r\in(0,1).
\end{align}
We are now ready to state the main result of this section.

\begin{theorem}\label{so2existence}
Let $Y$ be a symmetric positive-definite $SO(2)\times SO(2)$-invariant tensor field on~$\mathbb S^3$ of the form~(\ref{Y_form}). Then there exists an $SO(2)\times SO(2)$-invariant Riemannian metric $g$ such that~$X(g)=Y$.
\end{theorem}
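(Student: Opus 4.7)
The plan is to exploit the cohomogeneity-one structure to reduce the problem to a boundary value problem for a small system of ODEs, and then solve this BVP by a continuity method.

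First, since $Y$ has no $d\theta_1\otimes d\theta_2$ term, it is invariant not only under $SO(2)\times SO(2)$ but also under the reflections $\theta_i\mapsto-\theta_i$. I would seek a solution $g$ with the same enlarged symmetry, which forces $g$ to be diagonal in the same coordinates as $Y$:
\begin{equation*}
g = a(r)^2\,dr\otimes dr + b(r)^2\,d\theta_1\otimes d\theta_1 + c(r)^2\,d\theta_2\otimes d\theta_2,
\end{equation*}
with unknown positive functions $a,b,c$ on $(0,1)$. Smoothness of $g$ at the singular orbits imposes $b(0)=0$, $b'(0)=a(0)$, $c'(0)=0$, together with the analogous conditions at $r=1$ with the roles of $b$ and $c$ interchanged. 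A direct computation in the $g$-orthonormal frame $\{a^{-1}\partial_r,b^{-1}\partial_{\theta_1},c^{-1}\partial_{\theta_2}\}$ shows that the cross curvature of such a doubly-warped metric is diagonal, with entries that are algebraic expressions in $a,b,c$ and their derivatives up to order two. Equating $X(g)=Y$ componentwise produces three ODEs; since $a$ enters the $dr\otimes dr$ equation only algebraically, one can solve that equation for $a^2$ and reduce to a system of two second-order ODEs for $b$ and $c$.

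To solve this reduced BVP, I would employ a continuity argument. Connect $Y$ to a reference tensor $Y_0$ through a smooth path $\{Y_t\}_{t\in[0,1]}$ of diagonal $SO(2)\times SO(2)$-invariant positive-definite tensor fields, with $Y_0$ chosen so that a solution is available directly (for example, $Y_0 = X(\bar g)$ for a Berger-type metric $\bar g$ on $\mathbb S^3$, or a tensor sufficiently close to the round metric, covered by the existence result of Section~\ref{sec_round}). Let $S\subset[0,1]$ be the set of parameters for which the BVP admits a solution $g_t$ within the ansatz above. Openness of $S$ follows from the inverse function theorem applied to the restriction of $g\mapsto X(g)$ to diagonal invariant metrics: the relevant linearisation is an elliptic ODE operator in the pair $(b,c)$, whose kernel can be ruled out by integration-by-parts identities analogous in spirit to those used in the proof of Theorem~\ref{crosscurvaturelinearisation}.

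The principal obstacle is closedness of $S$, which demands a priori estimates on $g_{t_n}$ as $t_n\to t_*$: uniform upper bounds on $a,b,c$ and their derivatives, uniform lower bounds on $b$ and $c$ away from $r=0,1$, and preservation of the smooth closure at the singular orbits. Here the positivity of $Y$ pays off: by the argument preceding the statement of Proposition~\ref{prop_12to02}, each $g_{t_n}$ has positive sectional curvature, which in our warped-product setting forces concavity of $b$ and $c$ as functions of $g_{t_n}$-arclength and yields qualitative control such as unique interior maxima. Combining this with integral identities drawn from the ODE system, together with the uniform bounds on $Y_{t_n}$, should yield the compactness needed to close the continuity argument. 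Any $C^3$ limit metric so produced is then promoted to a smooth solution of $X(g)=Y$ by Theorem~\ref{regularityxcurvature}.
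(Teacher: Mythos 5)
Your reduction to a boundary value problem for a doubly-warped ansatz is sound and matches the paper's Lemma~\ref{so2equationslemma} (where $h$ is indeed eliminated algebraically and the system is rewritten in the variables $l_i=-f_i'/h$). The difficulty is entirely in how you propose to solve the BVP, and there is a genuine gap in the openness step of your continuity method. You assert that the kernel of the linearisation at an arbitrary solution $g_t$ along the path ``can be ruled out by integration-by-parts identities analogous in spirit to those used in the proof of Theorem~\ref{crosscurvaturelinearisation}.'' But Theorem~\ref{crosscurvaturelinearisation} contains no such identities: it is a computation of $X'_{g_0}$ at the round metric, and the triviality of the relevant kernel in Section~\ref{sec_round} is obtained from the explicit spectrum of the Lichnerowicz Laplacian on the round sphere, which does not transfer to an unknown metric $g_t$ partway along your path. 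Worse, there is structural evidence that the linearisation \emph{must} degenerate along such paths: Section~\ref{sec_nonuniq} exhibits an $SO(3)$-invariant $Y$ with three distinct invariant solutions, so in the closely analogous cohomogeneity-one setting the solution count changes and bifurcations occur. This is precisely why the paper works with Brouwer degree on a shooting map (matching data at $r=\tfrac12$), deforming the \emph{nonlinearity} rather than the target $Y$: homotopy invariance of the degree requires only a priori bounds on the shooting parameters and a computable degree at the decoupled endpoint $p=0$ (where the map is affine and nondegenerate, Lemma~\ref{lem_degree_nonzero}), and never requires invertibility of intermediate linearisations.

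The closedness step is also substantially underestimated. Your concavity observation is correct and corresponds to the paper's maximum principle (Lemma~\ref{so2_lem_max_princ}, giving $-1<l_1<0$ and $0<l_2<1$), but this only yields $C^0$/$C^1$ control of $l_1,l_2$ in the interior. What must actually be bounded, to prevent the limit metric from degenerating at the singular orbits, are the second-order shooting data $l_1''(0)$, $l_2'(0)$, $l_1'(1)$, $l_2''(1)$ for a system whose coefficients blow up like $1/r$ and $1/r^2$ at the endpoints; the limit must also retain $l_2'(0)>0$ and $l_1'(1)>0$ so that $h=-l_1l_2/\sigma$ stays positive and finite. The paper's Lemma~\ref{so2uniformbounds} obtains these bounds by a delicate rescaling/blow-up analysis near the singular orbits, and the well-posedness of the singular initial value problem itself (Lemma~\ref{IVPWD}) requires a separate fixed-point argument in weighted norms. ``Integral identities drawn from the ODE system'' is not a substitute for this analysis, and as written your argument does not close.
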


The rest of this section is devoted to the proof of Theorem~\ref{so2existence}.

\subsection{The equations of prescribed cross curvature}

Suppose that $h$, $f_1$ and $f_2$ are smooth positive functions on $(0,1)$. Then the expression
\begin{align}\label{so2metric}
    g=h(r)^2dr\otimes dr+f_{1}(r)^2d\theta_1\otimes d\theta_1+f_{2}(r)^2d\theta_2\otimes d\theta_2,\qquad r\in(0,1),
\end{align}
defines an $SO(2)\times SO(2)$-invariant metric on $(0,1)\times\mathbb S^1\times\mathbb S^1$. The following result tells us when this metric can be extended smoothly to~$\mathbb S^3$. While it is well-known, we sketch the proof for the convenience of the reader.

\begin{lemma}\label{lem_so2_smooth}
Equality~(\ref{so2metric}) defines an $SO(2)\times SO(2)$-invariant metric on $\mathbb S^3$ if and only if the following conditions are satisfied:
\begin{enumerate}
    \item
    One can extend $h$ to a smooth function on $(-1,2)$ that is even about both $r=0$ and $r=1$.
    \item
    One can extend $f_1$ to a smooth function on $(-1,2)$ that is odd about $r=0$ and even about~$r=1$.
    \item
    One can extend $f_2$ to a smooth function on $(-1,2)$ that is even about $r=0$ and odd about~$r=1$.
    \item
These extensions satisfy
\begin{align}\label{bc_hf1f2}
f_1'(0)=h(0)>0,\qquad f_1(1)>0,\qquad f_2(0)>0,\qquad f_2'(1)=-h(1)<0.
\end{align}
\end{enumerate}
\end{lemma}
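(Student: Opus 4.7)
The plan is to prove both directions by reducing to the classical smoothness criterion for rotationally-symmetric metrics on a $2$-disk, applied in local tubular neighbourhoods of the two singular circles. On the principal part $(0,1)\times\mathbb S^1\times\mathbb S^1$ the expression~\eqref{so2metric} is manifestly smooth, so the only nontrivial issue is smoothness across the two singular orbits at $r=0$ and $r=1$. The analyses at the two ends are symmetric; I would carry out the one at $r=0$ in detail and transcribe it at $r=1$.

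Near $r=0$ the collapsing circle is the $\theta_1$-circle, so a tubular neighbourhood of the singular orbit is equivariantly diffeomorphic to $D^2\times\mathbb S^1$, with $D^2$ carrying polar coordinates $(r,\theta_1)$ and $\mathbb S^1$ the coordinate $\theta_2$. My first step is to introduce Cartesian coordinates $x=r\cos\theta_1$, $y=r\sin\theta_1$ on $D^2$ and observe that a function of $r$ alone extends to a smooth $SO(2)$-invariant function on $D^2$ if and only if its extension to $(-1,1)$ by $r\mapsto -r$ is smooth and even. This immediately produces the parity statements for $h$ and $f_2$ in items~(1) and~(3), together with the positivity $f_2(0)>0$ from item~(4), needed so that the coefficient of $d\theta_2\otimes d\theta_2$ remains positive across the singular orbit.

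The second step handles the term $f_1^2\,d\theta_1\otimes d\theta_1$, which is subtle because $d\theta_1$ itself is singular at the origin of $D^2$. Here I would invoke (or re-derive in a few lines by expanding $d\theta_1=r^{-2}(-y\,dx+x\,dy)$ and collecting smooth functions of $(x,y)$) the classical fact that the rotationally-symmetric $2$-disk metric $h(r)^2\,dr^2+f_1(r)^2\,d\theta_1^2$ extends smoothly to $D^2$ if and only if $h$ is smooth and even about $r=0$, $f_1$ is smooth and odd about $r=0$, and the ``no cone'' condition $f_1'(0)=h(0)$ holds. This yields items~(1), (2), and the first equality in~(4). The argument at $r=1$ is identical after swapping the roles of $\theta_1$ and $\theta_2$ and replacing $r$ by $1-r$; the latter substitution accounts for the minus sign in $f_2'(1)=-h(1)$.

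The converse is immediate from the reversibility of the above reductions: given all four listed conditions, the Cartesian-coordinate expressions of the metric near each singular circle are manifestly smooth, and the principal part is untouched. The step I expect to be the main obstacle is establishing the boundary equalities $f_1'(0)=h(0)$ and $f_2'(1)=-h(1)$; the parity statements are essentially automatic from $SO(2)\times SO(2)$-invariance combined with smoothness in Cartesian coordinates, but the derivative equalities follow only from matching the quadratic behaviour of the collapsing fibre length to the flat Euclidean model at the centre of the transverse disk.
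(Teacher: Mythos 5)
Your proposal is correct and follows essentially the same route as the paper: pass to Cartesian coordinates on a transverse disk in a tubular neighbourhood of each singular circle and reduce to the standard smoothness criterion for rotationally symmetric disk metrics (evenness of $h$, oddness of the collapsing warping function, and the no-cone condition $f_1'(0)=h(0)$, resp.\ $f_2'(1)=-h(1)$). The paper simply carries out in full the computation you propose to cite or re-derive, using Whitney's theorem on even functions for the converse direction.
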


\begin{proof}
We will prove that~\eqref{so2metric} defines a metric near the orbit of $\gamma(0)$ if and only if the conditions at $r=0$ hold. The argument is almost exactly the same near the other singular orbit. Let us introduce local coordinates $(x_1,x_2,x_3)$ centred at $\gamma(0)$ by setting
$$
x_1(r,\theta_1,\theta_2)=r\cos\theta_1,\qquad x_2(r,\theta_1,\theta_2)=r\sin\theta_1,\qquad x_3(r,\theta_1,\theta_2)=\theta_2,
$$
where $r\in[0,1)$ and $\theta_1,\theta_2\in[0,2\pi)$. Denote by $(g_{ij})_{i,j=1}^3$ the components of the metric $g$ given by~\eqref{so2metric} with respect to these coordinates. We compute
\begin{align*}
dr=\cos\theta_1\,dx_1+\sin\theta_1\,dx_2, \qquad d\theta_1=-\frac{\sin\theta_1}{r}dx_1+\frac{\cos\theta_1}{r}dx_2,\qquad d\theta_2=dx_3.
\end{align*}
Consequently,
\begin{align}\label{smoo_comp_met}
g_{11}(x_1(r,\theta_1,\theta_2),x_2(r,\theta_1,\theta_2),x_3(r,\theta_1,\theta_2))&=h^2(r)+\bigg(\frac{f_1^2(r)}{r^2}-h^2(r)\bigg)\sin^2\theta_1, \notag \\
g_{22}(x_1(r,\theta_1,\theta_2),x_2(r,\theta_1,\theta_2),x_3(r,\theta_1,\theta_2))&=h^2(r)+\bigg(\frac{f_1^2(r)}{r^2}-h^2(r)\bigg)\cos^2\theta_1.
\end{align}

Assume that $g$ extends smoothly to the orbit of~$\gamma(0)$. Formula~\eqref{smoo_comp_met} implies that the $C^\infty$ function $\bar h:(-1,1)\to(0,\infty)$ given by
\begin{align*}
\bar h(x)=\sqrt{g_{11}(x,0,0)}
\end{align*}
is even and that $h(r)=\bar h(r)$ for $r\in(0,1)$. Thus, condition~1 is satisfied. Similiar reasoning proves the existence of an even $C^\infty$ function $\bar f_2:(-1,1)\to(0,\infty)$ such that $f_2(r)=\bar f_2(r)$ for $r\in(0,1)$. This means condition~3 holds.

Adding the two equalities in~\eqref{smoo_comp_met}, we obtain
\begin{align*}
(g_{11}+g_{22})(x_1(r,\theta_1,\theta_2),x_2(r,\theta_1,\theta_2),x_3(r,\theta_1,\theta_2))=\frac{f_1^2(r)}{r^2}.
\end{align*}
It follows that $\frac{f_1^2(r)}{r^2}$ extends to an even $C^\infty$ function on $(-1,1)$. The formula
\begin{align*}
\lim_{r\to0}\frac{f_1(r)}{r}=\lim_{x\to0}\sqrt{g_{11}(0,x,0)}=\sqrt{g_{11}(0,0,0)}=\lim_{x\to0}\sqrt{g_{11}(x,0,0)}=\lim_{r\to0}h(r)
\end{align*}
proves that $f_1'(0)=h(0)>0$. Consequently, $f_1$ extends to an odd $C^\infty$ function on $(-1,1)$. Finally,
$$
f_2(0)=\sqrt{g_{33}(0,0,0)}>0,
$$
which proves~\eqref{bc_hf1f2}.

Assume that conditions~1--4 are satisfied at $r=0$. Let $g(\gamma(0))$ be the $(0,2)$-tensor with the matrix
$$
\left(
\begin{matrix}
h^2(0) & 0 & 0 \\
0 & h^2(0) & 0 \\
0 & 0 & f_2^2(0)
\end{matrix}
\right)
$$
in the coordinates~$(x_1,x_2,x_3)$. Together with $SO(2)\times SO(2)$-invariance, this defines an extension of the metric $g$ given by~\eqref{so2metric} to the orbit of~$\gamma(0)$. We must show that this extension is smooth. Conditions~1, 2 and~4 imply that 
$$
h^2(r)=\hat h(r^2)\qquad\mbox{and}\qquad \frac{f_1^2(r)}{r^2}-h^2(r)=r^2\check h(r^2)
$$
for some $C^\infty$ functions $\hat h,\check h:(-1,1)\to\mathbb R$ and all $r\in(0,1)$; see~\cite{HW43}. As a consequence,
$$
g_{11}(x_1,x_2,x_3)=\hat h(x_1^2+x_2^2)+x_2^2 \check h(x_1^2+x_2^2),
$$
which means $g_{11}$ is~$C^\infty$. Analogous arguments prove that the other components of $g$ are $C^\infty$ as well. Thus, $g$ must be smooth near the orbit of~$\gamma(0)$.
\end{proof}

To prove Theorem~\ref{so2existence}, we will search for a solution to the equation $X(g)=Y$ in the class of metrics of the form~\eqref{so2metric}. Our first step is to convert this equation into a system of ODEs. To this end, define
\begin{align*}
    l_1=-\frac{f_1'}{h},&\qquad l_2=-\frac{f_2'}{h},\qquad 
   \phi_1=\frac{y_1}{y_2}, \qquad \phi_2=\frac{y_2}{y_1},\qquad \sigma=y_1y_2.
\end{align*}
Application of Lemma~\ref{lem_so2_smooth} to the tensor field~$Y$ shows that $y_1$ and $y_2$ possess the following properties:
\begin{enumerate}
    \item
    One can extend $y_1$ to a smooth function on $(-1,2)$ that is odd about~$r=0$ and even about~$r=1$.
    \item
    One can extend $y_2$ to a smooth function on $(-1,2)$ that is even about~$r=0$ and odd about~$r=1$.
    \item
These extensions satisfy
\begin{align}\label{BC_y}
    y_1'(0)=1,\qquad y_1(1)>0,\qquad y_2(0)>0,\qquad y_2'(1)=-1.
\end{align}
\end{enumerate}
Consequently, the function $\sigma:(0,1)\to (0,\infty)$ is a restriction of a smooth function on $(-1,2)$ that is odd about both $r=0$ and $r=1$. 
Formulas~\eqref{BC_y} imply
\begin{align}\label{smoothessphi}
&\frac{\phi_1'}{\phi_1}=\frac{1}{r}+S_{\phi_1} , &&\frac{\phi_1}{\sigma}=\frac{1}{y_2(0)^2}+S_{\sigma1}, \notag
\\
&\frac{\phi_2'}{\phi_2}=-\frac{1}{r}+S_{\phi_2}, && \frac{\phi_2}{\sigma}=\frac{1}{r^2}-\frac{y_1'''(0)}{3}+S_{\sigma2}.
\end{align}
Here, $S_{\phi_i}$ and $S_{\sigma i}$ are smooth functions on $[0,1)$ with
\begin{align*}
S_{\phi_i}=O(r),\qquad S_{\sigma i}=O(r^2),\qquad r\to0.
\end{align*}
Equalities similar to~\eqref{smoothessphi} hold near $r=1$. 

\begin{lemma}\label{so2equationslemma}
Suppose that the metric $g$ on $\mathbb S^3$ given by~(\ref{so2metric}) is such that $X(g)=Y$. Then the functions $l_1$ and $l_2$ solve the ODEs
\begin{align}\label{so2equationsli}
    \bigg(\frac{l_1'}{\phi_1}\bigg)'=\frac{l_1l_2^2}{\sigma}, \qquad \bigg(\frac{l_2'}{\phi_2}\bigg)'=\frac{l_1^2l_2}{\sigma},\qquad r\in(0,1),
\end{align}
subject to the boundary conditions
\begin{align}\label{so2smoothnessli}
    l_1(0)=-1, \qquad l_1'(0)=0, \qquad l_2(0)=0,\qquad l_1(1)=0, \qquad
    l_2(1)=1, \qquad l_2'(1)=0.
\end{align}
Conversely, suppose that one can find $l_1$ and $l_2$ that satisfy~(\ref{so2equationsli}) and possess the following properties:
\begin{enumerate}
    \item One can extend $l_1$ to a smooth function on $(-1,2)$ that is even about $r=0$ and odd about $r=1$.
    \item One can extend $l_2$ to a smooth function on $(-1,2)$ that is odd about $r=0$ and even about $r=1$.
    \item These extensions satisfy condition~(\ref{so2smoothnessli}) along with the inequalities $l_2'(0)>0$ and $l_1'(1)>0$.
\end{enumerate}
Then there exists a metric $g$ on $\mathbb S^3$ of the form~(\ref{so2metric}) such that $X(g)=Y$.
\end{lemma}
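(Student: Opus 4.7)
The plan is to reduce the equation $X(g)=Y$ to the stated ODE system by exploiting the fact that the coordinate basis $\{\partial_r,\partial_{\theta_1},\partial_{\theta_2}\}$ simultaneously diagonalizes $g$, $Y$, the Einstein tensor, and hence the cross curvature. For the forward direction, I would first compute the three principal sectional curvatures of a metric of the form~\eqref{so2metric}. Passing to the $g$-orthonormal frame $e_1=h^{-1}\partial_r$, $e_2=f_1^{-1}\partial_{\theta_1}$, $e_3=f_2^{-1}\partial_{\theta_2}$ and using arclength $ds=h\,dr$ (so that $df_i/ds=-l_i$), the standard doubly-warped product formulas yield
\begin{align*}
K(e_2,e_3)=-\frac{l_1 l_2}{f_1 f_2},\qquad K(e_1,e_2)=\frac{l_1'}{h f_1},\qquad K(e_1,e_3)=\frac{l_2'}{h f_2}.
\end{align*}
By the eigenvalue computation at the end of the proof of Proposition~\ref{prop_12to02}, $X(g)$ is diagonal in $\{e_1,e_2,e_3\}$ with entries equal to products of the other two sectional curvatures, so $X(g)=Y$ in coordinates reduces to the three scalar equations
\begin{align*}
\frac{l_1'l_2'}{f_1 f_2}=1,\qquad \frac{-l_1 l_2\,l_1'}{h f_2}=y_1^2,\qquad \frac{-l_1 l_2\,l_2'}{h f_1}=y_2^2.
\end{align*}
Multiplying the last two and using the first gives $h^2\sigma^2=l_1^2 l_2^2$, so $h\sigma=-l_1 l_2$ under the expected signs $l_1<0<l_2$ on $(0,1)$. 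Substituting back collapses the remaining equations to $l_1'=f_2\phi_1$ and $l_2'=f_1\phi_2$; differentiating and using $f_i'=-hl_i=l_1 l_2 l_i/\sigma$ produces~\eqref{so2equationsli}. The boundary conditions~\eqref{so2smoothnessli} then follow directly from Lemma~\ref{lem_so2_smooth} applied to $g$: for instance $l_1(0)=-f_1'(0)/h(0)=-1$ and $l_2(1)=-f_2'(1)/h(1)=1$, while $l_1'(0)=0$ and $l_2'(1)=0$ are forced by the parities of $f_1,f_2,h$ about $r=0$ and $r=1$.

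For the converse, I would define
\begin{align*}
f_1:=\frac{l_2'}{\phi_2},\qquad f_2:=\frac{l_1'}{\phi_1},\qquad h:=-\frac{l_1 l_2}{\sigma}.
\end{align*}
The two ODEs of~\eqref{so2equationsli} immediately give $f_1'=-hl_1$ and $f_2'=-hl_2$, so the definitions are mutually consistent with $l_i=-f_i'/h$. The next step is to verify smoothness of $h,f_1,f_2$ near $r=0$, and analogously near $r=1$. Using~\eqref{smoothessphi} and the Taylor expansions of $l_1,l_2$ dictated by the assumed parity extensions, one checks that the quotients $l_1'/\phi_1$ and $l_2'/\phi_2$ are ratios of smooth functions vanishing to matching orders at the endpoints and so extend as smooth functions with the parities required by Lemma~\ref{lem_so2_smooth}; similarly for $h$. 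The hypotheses $l_2'(0)>0$ and $l_1'(1)>0$ translate into $f_1'(0)=h(0)>0$ and $f_2'(1)=-h(1)<0$ after a short limit computation, while the positivities $f_2(0)>0$ and $f_1(1)>0$ can be read off by applying L'H\^opital to the quotients at the singular orbits. Lemma~\ref{lem_so2_smooth} then promotes $g=h^2 dr\otimes dr+f_1^2 d\theta_1\otimes d\theta_1+f_2^2 d\theta_2\otimes d\theta_2$ to a smooth $SO(2)\times SO(2)$-invariant metric on~$\mathbb{S}^3$, and running the algebra of the forward direction in reverse yields $X(g)=Y$.

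The main obstacle will be the smoothness verification in the converse direction: the parities of $l_1,l_2$ guarantee vanishing of the relevant Taylor coefficients to the correct orders, but one must carefully confirm that the quotients defining $h,f_1,f_2$ really extend to smooth (not merely continuous) functions and that every parity and positivity condition in Lemma~\ref{lem_so2_smooth} is met. The sectional curvature computation in the forward direction is standard but lengthy, and in a written proof one would likely either quote the doubly-warped product formulas or execute the Koszul calculation directly.
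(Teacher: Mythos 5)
Your forward direction matches the paper's: the same three scalar equations, the same identity $h\sigma=|l_1l_2|$, and Lemma~\ref{lem_so2_smooth} for the boundary conditions. The sign determination ``under the expected signs $l_1<0<l_2$'' should be justified rather than assumed, but this is quickly done: since $g$ is a metric, $\sigma h=|l_1l_2|>0$ on $(0,1)$, so $l_1$ and $l_2$ never vanish there, and the values $l_1(0)=-1$, $l_2(1)=1$ then fix the signs (the paper argues essentially this way). So the forward half is fine.

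The genuine gap is in the converse. You define $h=-l_1l_2/\sigma$, $f_1=l_2'/\phi_2$, $f_2=l_1'/\phi_1$ and verify positivity and smoothness only \emph{at the endpoints}, but for \eqref{so2metric} to define a Riemannian metric you need $h,f_1,f_2>0$ on all of $(0,1)$, i.e.\ you must show $l_1<0$ and $l_2>0$ throughout $(0,1)$. This does not follow from the boundary conditions \eqref{so2smoothnessli} and the parity hypotheses alone; it uses the structure of the ODEs \eqref{so2equationsli}, namely that each is linear in $l_i$ with nonnegative zeroth-order coefficient ($l_2^2\phi_1/\sigma$, resp.\ $l_1^2\phi_2/\sigma$), so a maximum principle applies. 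The paper devotes a substantial part of its proof to exactly this: if $l_1$ attained a nonnegative interior maximum at $r_0$, then either $l_1(r_0)=l_1'(r_0)=0$ forces $l_1\equiv0$ (contradicting $l_1(0)=-1$), or $l_1(r_0)>0$ and the identity $l_1''=\frac{\phi_1'}{\phi_1}l_1'+\frac{l_2^2\phi_1}{\sigma}l_1$ keeps $l_1'\ge0$ for as long as $l_1\ge0$, forcing $l_1(1)>0$ and contradicting $l_1(1)=0$. Without this step your $g$ need not be positive-definite and ``running the algebra in reverse'' is not available. A related, smaller point: your choices $f_1=l_2'/\phi_2$, $f_2=l_1'/\phi_1$ make positivity depend on the (unestablished) signs of $l_1'$, $l_2'$; the paper instead sets $f_1(r)=-\int_0^r l_1h\,dt$ and $f_2(r)=\int_r^1 l_2h\,dt$, so that once $l_1<0<l_2$ and $h>0$ are known, positivity of $f_1,f_2$ on $(0,1)$ and the boundary conditions \eqref{bc_hf1f2} are immediate.
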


\begin{proof}
Let $g$ satisfy~\eqref{so2metric}. The Ricci curvature and the scalar curvature of $g$ are given by 
\begin{align*}
    \Ric(g)&=-\left(\frac{f_1''}{f_1}-\frac{h'f_1'}{hf_1}+\frac{f_2''}{f_2}-\frac{h'f_2'}{hf_2}\right)dr\otimes dr\\
    &\hphantom{=}~-\left(\frac{f_1f_1''}{h^2}-\frac{f_1h'f_1'}{h^3}+\frac{f_1f_1'f_2'}{h^2f_2}\right)d\theta_1\otimes d\theta_1-\left(\frac{f_2f_2''}{h^2}-\frac{f_2h'f_2'}{h^3}+\frac{f_2f_2'f_1'}{h^2f_1}\right)d\theta_2\otimes d\theta_2
    \\ &=\left(\frac{hl_1'}{f_1}+\frac{hl_2'}{f_2}\right)dr\otimes dr+\left(\frac{f_1l_1'}{h}-\frac{f_1l_1l_2}{f_2}\right)d\theta_1\otimes d\theta_1+\left(\frac{f_2l_2'}{h}-\frac{f_2l_1l_2}{f_1}\right)d\theta_2\otimes d\theta_2,
    \\
     S(g)&=2\frac{l_1'}{hf_1}+2\frac{l_2'}{hf_2}-2\frac{l_1l_2}{f_1f_2};
\end{align*}
see, e.g.,~\cite[Lemma~3.1]{Pulemotov16}. Consequently,
\begin{align*}
    X(g)=\frac{l_1'l_2'}{f_1f_2}dr\otimes dr-\frac{l_1l_2l_1'}{f_2h}d\theta_1\otimes d\theta_1-\frac{l_1l_2l_2'}{f_1h}d\theta_2\otimes d\theta_2.
\end{align*}
Suppose $X(g)=Y$. Then
\begin{align*}
    \frac{l_1'l_2'}{f_1f_2}=1,\qquad -\frac{l_1l_2l_1'}{f_2h}=y_1^2,\qquad -\frac{l_1l_2l_2'}{f_1h}=y_2^2,
\end{align*}
which implies
\begin{align*}
    \sigma=\pm\frac{l_1l_2}{h}, \qquad
   l_1'=\mp\phi_1f_2,\qquad l_2'=\mp\phi_2f_1.
\end{align*}
By Lemma~\ref{lem_so2_smooth}, $l_1$ and $l_2$ can be extended to smooth functions on $(-1,2)$ satisfying conditions~\eqref{so2smoothnessli}. It is impossible, according to these conditions, to have $l_1 l_2>0$ on all of $(0,1)$. The positivity of $\sigma$ on $(0,1)$ then implies
\begin{align*}
    \sigma=-\frac{l_1l_2}{h}, \qquad
   \bigg(\frac{l_1'}{\phi_1}\bigg)'=f_2'=-l_2h=\frac{l_1l_2^2}{\sigma},\qquad \bigg(\frac{l_2'}{\phi_2}\bigg)'=f_1'=-l_1h=\frac{l_1^2l_2}{\sigma}.
\end{align*}

Conversely, suppose that $l_1$ and $l_2$ solve the boundary-value problem~\eqref{so2equationsli}--\eqref{so2smoothnessli} and possess the properties listed in the lemma. Let us show that $l_1<0$ on $(0,1)$. Assuming the contrary, we pick a point $r_0\in(0,1)$ where $l_1$ attains its maximum. Clearly $l_1(r_0)\ge0$ and $l_1'(r_0)=0$. If $l_1(r_0)$ vanishes, then $l_1$ is identically~0 by uniqueness of solutions to an ODE. However, this contradicts the boundary conditions~\eqref{so2smoothnessli}. Thus, we may assume $l_1(r_0)>0$. Let $\delta$ be the largest number in $(0,1)$ such that $l_1\ge0$ on~$[r_0,\delta)$. The first equation in~\eqref{so2equationsli} implies
\begin{align*}
l_1''=\frac{l_1'\phi_1'}{\phi_1}+
    \frac{l_1l_2^2\phi_1}{\sigma}.
\end{align*}
Consequently, $l_1'\ge0$ on $[r_0,\delta)$, which means $l_1(\delta)\ge l_1(r_0)>0$. However, this is impossible in light of the maximality of $\delta$ and the boundary conditions~\eqref{so2smoothnessli}. We conclude that $l_1<0$ on $(0,1)$. Analogous reasoning shows that $l_2>0$ on $(0,1)$. 

Define
\begin{align*}
h=-\frac{l_1l_2}{\sigma},\qquad r\in(0,1).
\end{align*}
Evidently, this function is positive on $(0,1)$. The boundary conditions~\eqref{so2smoothnessli} imply that
\begin{align*}
h(r)=\frac{l_2'(0)r+o(r)}{y_2(0)r+o(r)},\qquad r\to0.
\end{align*}
Set $h(0)=\lim_{r\to0}h(r)>0$. Properties of $l_1$, $l_2$ and $\sigma$ imply that $h$ can be extended to a function that is even about $r=0$. Analogously, set $h(1)=\lim_{r\to1}h(r)>0$. It is clear that $h$ can be extended to a function that is even about $r=1$.

Finally, define
\begin{align*}
f_1(r)=-\int_0^rl_1(t)h(t)\,dt,\qquad f_2(r)=\int_r^1l_2(t)h(t)\,dt,\qquad r\in[0,1].
\end{align*}
These functions are positive on $(0,1)$. The boundary conditions~\eqref{so2smoothnessli} imply formulas~\eqref{bc_hf1f2}. It is easy to see that~\eqref{so2metric} yields a metric $g$ on $\mathbb S^3$ satisfying $X(g)=Y$.
\end{proof}

\subsection{The linear problem}
In this subsection, we study the existence, uniqueness and regularity properties of the ODEs that arise as the principal part of the linearised prescribed cross curvature equations. We will need the following two simple lemmas from classical analysis. We include the proofs for the convenience of the reader.

\begin{lemma}\label{DDT}
Let $u$ be a $C^k$-differentiable function on $[0,T)$ for some $k\in\mathbb N$ and $T>0$. Assume that $u(0)=0$. The limit $\lim_{t\to0}\frac{u(t)}{t}$ exists, and the continuous function $v:[0,T)\to\mathbb R$ such that
\begin{align*}
v(t)=\frac{u(t)}{t},\qquad t\in(0,T),
\end{align*}
is $C^{k-1}$-differentiable on $[0,T)$. 
\end{lemma}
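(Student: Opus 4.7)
The plan is to represent $v$ as an integral over a fixed compact interval so that its smoothness reduces to differentiation under the integral sign. Specifically, since $u(0)=0$ and $u$ is $C^k$ (hence $C^1$), the fundamental theorem of calculus gives
\begin{align*}
u(t)=\int_0^t u'(s)\,ds,\qquad t\in[0,T).
\end{align*}
Making the substitution $s=t\tau$ for $t>0$, I get
\begin{align*}
v(t)=\frac{u(t)}{t}=\int_0^1 u'(t\tau)\,d\tau,\qquad t\in(0,T).
\end{align*}
The right-hand side is defined and continuous at $t=0$ as well (with value $u'(0)$), which simultaneously shows that $\lim_{t\to0}u(t)/t$ exists and that the continuous extension $v$ admits the uniform representation
\begin{align*}
v(t)=\int_0^1 u'(t\tau)\,d\tau,\qquad t\in[0,T).
\end{align*}

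Next I would obtain the $C^{k-1}$ regularity by repeatedly differentiating under the integral. Since $u\in C^k$, the derivatives $u^{(j+1)}$ are continuous for $0\le j\le k-1$, and the integrand $\tau^{j}u^{(j+1)}(t\tau)$ has a continuous $t$-derivative dominated, on any compact subinterval of $[0,T)$, by a constant depending only on $\sup|u^{(j+2)}|$. Standard dominated convergence / Leibniz-type arguments then yield
\begin{align*}
v^{(j)}(t)=\int_0^1 \tau^{j}\,u^{(j+1)}(t\tau)\,d\tau,\qquad t\in[0,T),\ 0\le j\le k-1,
\end{align*}
and each such expression is continuous in $t$. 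This shows $v\in C^{k-1}([0,T))$.

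There is no real obstacle here; the only minor care needed is to ensure the differentiation under the integral is justified up to the endpoint $t=0$, but this follows because $u^{(j+1)}$ is continuous on $[0,T)$ and therefore bounded on any compact subinterval, so the integrand and its $t$-derivatives admit uniform bounds in $\tau\in[0,1]$ on such subintervals. Once this template is in place, the lemma follows immediately, and the formula $v(0)=u'(0)$ drops out as a byproduct.
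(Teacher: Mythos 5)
Your proof is correct, but it follows a genuinely different route from the paper's. You use the Hadamard-type integral representation
\begin{align*}
v(t)=\int_0^1 u'(t\tau)\,d\tau,
\end{align*}
obtained from the fundamental theorem of calculus and the substitution $s=t\tau$, and then differentiate under the integral sign to get $v^{(j)}(t)=\int_0^1\tau^j u^{(j+1)}(t\tau)\,d\tau$ for $0\le j\le k-1$; each step is justified by continuity of the relevant derivative of the integrand on compact rectangles $[0,T']\times[0,1]$. The paper instead first subtracts the degree-$k$ Taylor polynomial of $u$ at $0$ (which only changes $v$ by a polynomial, since $u(0)=0$), reducing to the case $u^{(i)}(0)=0$ for $i\le k$, and then shows by induction and repeated applications of l'H\^opital's rule that $v^{(j)}(0)=\lim_{t\to0}v^{(j)}(t)=0$ for $j\le k-1$, working directly with the Leibniz expansion of the derivatives of $u(t)/t$. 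Your argument has the advantage of producing a clean closed-form expression for every $v^{(j)}$ valid uniformly up to $t=0$ and of avoiding the iterated limit computations; the paper's argument stays entirely within elementary one-variable calculus and does not invoke differentiation under an integral. One small indexing remark: in your justification of the Leibniz step, the bound controlling the passage from $v^{(j)}$ to $v^{(j+1)}$ involves $\sup|u^{(j+2)}|$, so the last admissible step is $j=k-2$, which is exactly what is needed to reach $v^{(k-1)}$; it is worth stating this explicitly so the reader sees that no derivative of $u$ beyond order $k$ is ever used.
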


\begin{proof}
By adding a polynomial to~$u$, we may assume without loss of generality that  $u^{(i)}(0)$ vanishes for $i=0,1,\ldots,k$. The existence of $\lim_{t\to0}\frac{u(t)}{t}$ is a consequence of l'H\^opital's rule. We will prove that
\begin{align}\label{simple_ders}
v^{(j)}(0)=\lim_{t\to0}v^{(j)}(t)=0
\end{align}
for $j=0,1,\ldots,k-1$. The assertion of the lemma will follow immediately. We proceed by induction. Since $v$ is continuous,
\begin{align*}
v(0)=\lim_{t\to0}v(t)=\lim_{t\to0}\frac{u(t)}t=u'(0)=0.
\end{align*}
Assume that formula~\eqref{simple_ders} holds for $j=m\le k-2$. Using l'H\^opital's rule repeatedly, we obtain
\begin{align*}
v^{(m+1)}(0)&=\lim_{t\to0}\frac{v^{(m)}(t)}{t}=\lim_{t\to0}\sum_{i=0}^m(-1)^{m-i}(m-i)!{m\choose i}\frac{u^{(i)}(t)}{t^{m-i+2}}=0, \\
\lim_{t\to0}v^{(m+1)}(t)&=\lim_{t\to0}\sum_{i=0}^{m+1}(-1)^{m+1-i}(m+1-i)!{m+1\choose i}\frac{u^{(i)}(t)}{t^{m-i+2}}=0.
\end{align*}
Thus,~\eqref{simple_ders} holds for $j=0,1,\ldots,k-1$.
\end{proof}

\begin{lemma}\label{IIOR}
Let $u$ be a $C^k$-differentiable function on $[0,T)$ for some $k\in \mathbb{N}$ and $T>0$. Given $i\in \mathbb{N}$, the limit 
$$
\lim_{t\to 0}\frac{1}{t^i}\int_0^t \tau^i u(\tau)\,d\tau
$$
exists. The continuous function $v:[0,T)\to \mathbb{R}$ such that 
\begin{align*}
v(t)=\frac{1}{t^i}\int_0^t \tau^i u(\tau)\,d\tau, \qquad t\in (0,T),
\end{align*}
is $C^{k+1}$-differentiable on $[0,T)$. 
\end{lemma}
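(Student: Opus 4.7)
The plan is to remove the singular prefactor $t^{-i}$ by a change of variables and then extract one extra order of regularity from $v$ via a single integration by parts. First, I would substitute $\tau=ts$ in the defining integral, obtaining
\begin{align*}
v(t)=t\int_0^1 s^i u(ts)\,ds,\qquad t\in[0,T).
\end{align*}
This expression is manifestly continuous on $[0,T)$ with $v(0)=0$, which immediately yields the existence of the limit asserted in the lemma.

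Next, I would set $F(t)=\int_0^1 s^i u(ts)\,ds$, so that $v(t)=tF(t)$. Differentiation under the integral sign shows $F\in C^k([0,T))$, since $\partial_t^j\bigl(s^i u(ts)\bigr)=s^{i+j}u^{(j)}(ts)$ is continuous in $(t,s)$ for every $0\le j\le k$. This already gives $v\in C^k([0,T))$; the substantive point is to gain one additional derivative. Computing $v'(t)=F(t)+tF'(t)$ and rewriting the second term by integration by parts in the variable $s$,
\begin{align*}
tF'(t)=\int_0^1 s^{i+1}\frac{d}{ds}u(ts)\,ds=u(t)-(i+1)F(t),
\end{align*}
I would obtain the clean identity $v'(t)=u(t)-iF(t)$, valid on all of $[0,T)$. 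Since the right-hand side lies in $C^k$, the function $v$ is $C^{k+1}$, as claimed.

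The only subtle step is the integration by parts above. Without it, writing $v(t)=t^{-i}\int_0^t\tau^i u(\tau)\,d\tau$ and iterating Lemma~\ref{DDT} on the numerator would cost one derivative per application and deliver only $C^{k+1-i}$ regularity; the identity $v'=u-iF$ avoids this loss and is, in effect, the real content of the lemma.
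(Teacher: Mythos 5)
Your argument is correct, and it takes a genuinely different route from the paper. The paper proceeds by induction on $i$: for the step from $j$ to $j+1$ it integrates by parts to write $t^{-(j+1)}\int_0^t\tau^{j+1}u\,d\tau=U(t)-(j+1)t^{-(j+1)}\int_0^t\tau^jU\,d\tau$ with $U(t)=\int_0^tu$, applies the induction hypothesis to the $C^{k+1}$ function $U$ to get a $C^{k+2}$ function $V$ vanishing at $0$, and then invokes Lemma~\ref{DDT} to divide by the leftover factor of $t$; the gain of one derivative at each stage exactly offsets the loss from Lemma~\ref{DDT}. You instead rescale $\tau=ts$ to obtain the representation $v(t)=tF(t)$ with $F(t)=\int_0^1s^iu(ts)\,ds$, get $F\in C^k$ by differentiating under the integral sign over the compact interval $[0,1]$, and then upgrade to $C^{k+1}$ via the identity $v'=u-iF$ (which for $t>0$ is just the product rule applied to $t^iv(t)=\int_0^t\tau^iu\,d\tau$, the substance being that $F=v/t$ is already known to be $C^k$ up to $t=0$). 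Your proof is self-contained, avoids both the induction on $i$ and any appeal to Lemma~\ref{DDT}, and yields the explicit formulas $F^{(j)}(t)=\int_0^1s^{i+j}u^{(j)}(ts)\,ds$; the paper's version has the mild advantage of staying entirely within the elementary toolkit (integration by parts and l'H\^opital-type arguments) already set up in Lemma~\ref{DDT}, which it reuses elsewhere. Both establish exactly the claimed $C^{k+1}$ regularity.
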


\begin{proof}
We proceed by induction. If $i=0$, the result is obvious. Suppose that it holds for all $i\le j$ with $j\in \mathbb{N}\cup\{0\}$. Given a $C^k$-differentiable $u$ on~${[0,T)}$, define $U(t)=\int_0^t u(\tau)\,d\tau$. Integration by parts yields 
\begin{align*}
\frac{1}{t^{j+1}}\int_0^{t}\tau^{j+1}u(\tau)\,d\tau=U(t)-\frac{j+1}{t^{j+1}}\int_0^t\tau^{j}U(\tau)\,d\tau, \qquad t\in(0,T).
\end{align*}
Clearly, $U(t)$ is $C^{k+1}$-differentiable on $[0,T)$. By the induction hypothesis, the map
$$
t\mapsto\frac{1}{t^j}\int_0^t \tau^j U(\tau)\,d\tau
$$
extends to a $C^{k+2}$ function $V:[0,T)\to\mathbb R$. Moreover,
\begin{align*}
|V(0)|=\lim_{t\to 0}\frac1{t^j}\bigg|\int_0^t\tau^jU(\tau)d\tau\bigg|\le \lim_{t\to 0} \int_0^t|U(\tau)|\,d\tau=0.
\end{align*}
Applying Lemma~\ref{DDT}, we easily conclude that
\begin{align*}
t\mapsto\frac{1}{t^{j+1}}\int_0^{t}\tau^{j+1}u(\tau)\,d\tau=U(t)-\frac{(j+1)V(t)}{t}.
\end{align*}
extends to a $C^{k+1}$ function on $[0,T)$.
\end{proof}

To prove Theorem~\ref{so2existence}, it suffices to find $l_1$ and $l_2$ satisfying~\eqref{so2equationsli}--\eqref{so2smoothnessli} along with conditions~1--3 of Lemma~\ref{so2equationslemma}. Our first step in this direction is to establish a short-time existence and uniqueness result for two relatively simple linear ODEs with singularities near the initial time. These ODEs are the principal part of the linearisation of system~\eqref{so2equationsli}.

\begin{lemma}\label{IV_linearised}
Let $S_1$ and $S_2$ be functions on $[0,T]$ for some $T>0$. Given $k\in[0,\infty)\cap\mathbb Z$, assume that $S_1$ and $tS_2$ are $C^k$-differentiable on $[0,T]$. There exist unique $C^{k+2}$ functions $s_1$ and $s_2$ on $[0,T]$ such that
\begin{align}\label{lin_IVP}
    &s_1''-\frac{s_1'}{t}=tS_1, && s_1''(0)=s_1'(0)=s_1(0)=0, \notag \\
    &s_2''+\frac{s_2'}{t}-\frac{s_2}{t^2}=tS_2, && s_2'(0)=s_2(0)=0.
\end{align}
Moreover,
\begin{align}\label{apriori_est_so2}
\sup_{t\in(0,T]}\bigg(\frac{|s_i(t)|}{t^2}+\frac{\left|s_i'(t)\right|}{t}\bigg)&\le CT\sup_{t\in(0,T]}|S_i(t)|,\qquad i=1,2,
\end{align}
where $C>0$ is independent of $S_1$, $S_2$ and $T$.
\end{lemma}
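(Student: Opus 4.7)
The claim involves two decoupled linear second-order ODEs with singular coefficients at $t = 0$. For each, the plan is to solve by explicit quadrature after recognising the structure, then read off regularity and estimates from the resulting integral formula.

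For $s_1$, the substitution $v = s_1'$ reduces the equation $s_1'' - s_1'/t = tS_1$ to the first-order linear equation $v' - v/t = tS_1$ on $(0,T]$. Multiplying by the integrating factor $1/t$ yields $(v/t)' = S_1$, so $v(t)/t = \int_0^t S_1(\tau)\,d\tau + c$ for some constant $c$. A short calculation shows $v'(0) = c$, so the condition $s_1''(0) = 0$ forces $c = 0$. This gives
\[
s_1'(t) = t\int_0^t S_1(\tau)\,d\tau, \qquad s_1(t) = \int_0^t \sigma \int_0^\sigma S_1(\tau)\,d\tau\,d\sigma.
\]
Since $S_1 \in C^k([0,T])$, the inner integral lies in $C^{k+1}$, and hence $s_1' \in C^{k+1}$ and $s_1 \in C^{k+2}$. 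Uniqueness is immediate: the general homogeneous solution is $v = Ct$, giving $s_1 = Ct^2/2 + D$, and the three initial conditions force $C = D = 0$. The estimates $|s_1'(t)|/t \le t\sup|S_1|$ and $|s_1(t)|/t^2 \le (t/3)\sup|S_1|$ follow directly from the formulas.

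For $s_2$, the homogeneous equation $s_2'' + s_2'/t - s_2/t^2 = 0$ is of Euler type; the ansatz $t^\alpha$ gives $\alpha = \pm 1$, so $\{t, 1/t\}$ is a fundamental system with Wronskian $-2/t$. Variation of parameters, together with the requirements that $s_2$ be bounded at $0$ (killing the $1/t$ mode) and that $s_2'(0) = 0$ (killing the $t$ mode via asymptotic analysis), produces the explicit formula
\[
s_2(t) = \frac{1}{2t}\int_0^t (t^2 - \tau^2)\tilde{S}_2(\tau)\,d\tau,
\]
where $\tilde{S}_2(\tau) := \tau S_2(\tau)$ is $C^k$ on $[0,T]$ by hypothesis. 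Uniqueness holds by the same reasoning, and the conditions $s_2(0) = s_2'(0) = 0$ follow from the leading asymptotic $\int_0^t\tau^2\tilde{S}_2(\tau)\,d\tau \sim \tilde{S}_2(0)t^3/3$. The estimate $|s_2(t)|/t^2 \le (T/8)\sup|S_2|$ follows from $|(t^2 - \tau^2)\tilde{S}_2(\tau)| \le (t^2-\tau^2)\tau\sup|S_2|$, and a similar computation after noting $s_2'(t) = \frac12\int_0^t\tilde{S}_2(\tau)\,d\tau + \frac{1}{2t^2}\int_0^t\tau^2\tilde{S}_2(\tau)\,d\tau$ handles $|s_2'(t)|/t$.

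The main step requiring care is establishing the $C^{k+2}$ regularity of $s_2$, not merely $C^{k+1}$. I would accomplish it by differentiating the integral formula twice and simplifying, which gives the identity
\[
s_2''(t) = \tilde{S}_2(t) - \frac{1}{t^3}\int_0^t \tau^2\tilde{S}_2(\tau)\,d\tau.
\]
The change of variables $\tau = ts$ then expresses the second term as $\int_0^1 s^2\tilde{S}_2(ts)\,ds$, which is manifestly $C^k$ on $[0,T]$ by differentiation under the integral sign (the $j$-th derivative equals $\int_0^1 s^{j+2}\tilde{S}_2^{(j)}(ts)\,ds$ for $j \le k$). Hence $s_2'' \in C^k$ and therefore $s_2 \in C^{k+2}$. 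The subtlety here is that the naive route via Lemmas \ref{DDT} and \ref{IIOR} applied to $s_2$ (or to $w$ in the substitution $s_2 = tw$, which reduces the equation to $(t^3 w')' = t^2\tilde{S}_2$) yields only $C^{k+1}$ regularity because of the mismatch between the powers of $\tau$ and $t$; the extra derivative is recovered only after the cancellation displayed above produces the pointwise $\tilde{S}_2(t)$ term.
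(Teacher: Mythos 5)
Your proposal is correct, and the solution formulas are the same as the paper's: your variation-of-parameters expression $s_2(t)=\frac{1}{2t}\int_0^t(t^2-\tau^2)\tilde S_2(\tau)\,d\tau$ is exactly the paper's iterated integral $\frac1t\int_0^t\tau\bigl(\int_0^\tau\rho S_2(\rho)\,d\rho\bigr)d\tau$ after one integration by parts, and the estimates and uniqueness arguments coincide. The only genuine divergence is in how the $C^{k+2}$ regularity of $s_2$ is obtained. The paper applies its Lemma~\ref{IIOR} with $i=1$ to the function $u(\tau)=\int_0^\tau\rho S_2(\rho)\,d\rho$, which is $C^{k+1}$ because $\tau S_2(\tau)$ is assumed $C^k$; Lemma~\ref{IIOR} then upgrades $\frac1t\int_0^t\tau u(\tau)\,d\tau$ to $C^{k+2}$ directly, so your parenthetical worry that the route through Lemmas~\ref{DDT} and~\ref{IIOR} ``yields only $C^{k+1}$'' does not apply to the way the paper actually deploys them (the loss you describe occurs only if one applies those lemmas to $s_2$ or to $w=s_2/t$ rather than to the primitive of $\tilde S_2$). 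Your alternative — computing $s_2''(t)=\tilde S_2(t)-\int_0^1 s^2\tilde S_2(ts)\,ds$ and differentiating under the integral sign — is a valid, self-contained substitute that avoids invoking Lemma~\ref{IIOR} altogether; it buys a slightly more transparent argument at the cost of redoing by hand the bookkeeping that Lemma~\ref{IIOR} packages once and reuses elsewhere in the paper (e.g.\ in Lemma~\ref{so3linearexistence} and in Step~1 of the proof of Theorem~\ref{so2existence}).
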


\begin{proof}
We can write the ODEs for $s_1$ and $s_2$ as
\begin{align*}
    \Big(\frac{s_1'}{t}\Big)'=S_1, \qquad \bigg(\frac{(ts_2)'}{t}\bigg)'=tS_2.
\end{align*}
Solving explicitly and using the initial conditions in~\eqref{lin_IVP}, we obtain
\begin{align*}
s_1(t)=\int_0^t\int_0^\tau\tau S_1(\rho)\,d\rho\,d\tau,\qquad
s_2(t)=\frac1t\int_0^t\int_0^\tau\tau\rho S_2(\rho)\,d\rho\,d\tau,\qquad t\in(0,T].
\end{align*}
Clearly, estimate~\eqref{apriori_est_so2} holds. It is obvious that $s_1$ is $C^{k+2}$. The fact that $s_2$ is $C^{k+2}$ follows from Lemma \ref{IIOR} applied to the $C^{k+1}$ function $ \int_0^{\tau}\rho S_2(\rho)\,d\rho$.
\end{proof}

\subsection{Existence near the singular orbits}

For $p\in[0,1]$, consider the system of ODEs
\begin{align}\label{MECFSO2}
    \left(\frac{l_1'}{\phi_1}\right)'&=\frac{l_1(pl_2+(1-p)\sin\frac{\pi t}{2})^2}{\sigma}, \notag \\
    \left(\frac{l_2'}{\phi_2}\right)'&=\frac{l_2(pl_1-(1-p)\cos\frac{\pi t}{2})^2}{\sigma}.
\end{align}
This system reduces to~\eqref{so2equationsli} if~$p=1$. We will prove its solvability near the endpoints of the interval~$[0,1]$. To do so, we will linearise it and employ Lemma~\ref{IV_linearised} along with the Banach fixed point theorem. Afterwards, we will vary $p$ from 0 to~1 and use Brouwer degree theory to produce $l_1$ and $l_2$ that solve~\eqref{so2equationsli}--\eqref{so2smoothnessli}. From these $l_1$ and $l_2$, we will construct a metric of the form~\eqref{so2metric} satisfying the equation~$X(g)=Y$.

\begin{lemma}\label{IVPWD}
Given $\alpha_1,\alpha_2\in \mathbb{R}$, system~(\ref{MECFSO2}) has a unique solution on $(0,T]$ for some $T\in\big(0,\frac12\big]$ with 
\begin{align}\label{alphaICs}
l_1(0)=-1,\qquad l_1'(0)=l_2(0)=0,\qquad l_1''(0)=\alpha_1,\qquad  l_2'(0)=\alpha_2.
\end{align}
The values of $l_1$, $l_1'$, $l_2$ and $l_2'$ at every $t\in(0,T]$ depend continuously on $\alpha_1,\alpha_2\in \mathbb{R}$ and $p\in [0,1]$. Similarly, given $\beta_1,\beta_2\in \mathbb{R}$, system~(\ref{MECFSO2}) has a unique solution on $[1-T,1)$ for some $T\in\big(0,\frac12]$ with 
\begin{align}\label{betaICs}
l_1(1)=l_2'(1)=0,\qquad l_2(1)=1, \qquad l_1'(1)=\beta_1,\qquad  l_2''(1)=-\beta_2.
\end{align}
Again, the values of $l_1$, $l_1'$, $l_2$ and $l_2'$ depend continuously on $\beta_1,\beta_2\in\mathbb R$ and $p\in [0,1]$. 
\end{lemma}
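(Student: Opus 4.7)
The plan is to absorb the prescribed initial data into an ansatz, reduce the resulting system to the framework of Lemma~\ref{IV_linearised}, and apply the Banach fixed point theorem. I focus on the problem near $t=0$; the argument at $t=1$ is entirely analogous after the substitution $\tau=1-t$ and the interchange of $l_1$ and $l_2$.

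Set
\begin{align*}
l_1(t)=-1+\tfrac{1}{2}\alpha_1 t^2+s_1(t),\qquad l_2(t)=\alpha_2 t+s_2(t),
\end{align*}
so that the data \eqref{alphaICs} is equivalent to $s_1(0)=s_1'(0)=s_1''(0)=0$ and $s_2(0)=s_2'(0)=0$. Expanding $(l_i'/\phi_i)'$ and multiplying through by $\phi_i$, the system \eqref{MECFSO2} becomes
\begin{align*}
l_1''-\tfrac{l_1'}{t}-l_1'S_{\phi_1}&=\tfrac{\phi_1}{\sigma}\,l_1\bigl(pl_2+(1-p)\sin\tfrac{\pi t}{2}\bigr)^2, \\
l_2''+\tfrac{l_2'}{t}-l_2'S_{\phi_2}&=\tfrac{\phi_2}{\sigma}\,l_2\bigl(pl_1-(1-p)\cos\tfrac{\pi t}{2}\bigr)^2.
\end{align*}
The crucial observation is that the leading singular terms cancel: since $\phi_2/\sigma=1/t^2+O(1)$ and the squared factor on the right of the second equation equals $1+O(t)$ (using $l_1(0)=-1$), the $\alpha_2 t$ piece of $l_2$ produces an $\alpha_2/t$ singularity on the right that exactly matches the $\alpha_2/t$ appearing in $l_2'/t$ on the left; the first equation admits an analogous cancellation because $\phi_1/\sigma$ is bounded and $(pl_2+(1-p)\sin\tfrac{\pi t}{2})^2=O(t^2)$. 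Substituting the ansatz and collecting terms, the system reduces to
\begin{align*}
s_1''-\tfrac{s_1'}{t}=tS_1,\qquad s_2''+\tfrac{s_2'}{t}-\tfrac{s_2}{t^2}=tS_2,
\end{align*}
where $S_1,S_2$ are smooth functions of $t$, $(s_1,s_2,s_1',s_2')$ and $(\alpha_1,\alpha_2,p)$, bounded on any set where $|s_i|/t^2$ and $|s_i'|/t$ are bounded and Lipschitz in $(s_1,s_2,s_1',s_2')$ in those seminorms.

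Next introduce the Banach space $\mathcal{B}_T$ of pairs of $C^1$ functions on $[0,T]$ vanishing to the prescribed order at $t=0$, with norm $\|(s_1,s_2)\|_T=\sum_{i=1}^2\sup_{t\in(0,T]}(|s_i(t)|/t^2+|s_i'(t)|/t)$. For $(s_1,s_2)\in\mathcal{B}_T$, Lemma~\ref{IV_linearised} delivers a unique $\Phi(s_1,s_2)=(\tilde{s}_1,\tilde{s}_2)$ solving the reduced linear system with data $S_i[s_1,s_2]$, and \eqref{apriori_est_so2} gives $\|\Phi(s_1,s_2)\|_T\le CT\sup_t|S_i|$. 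The Lipschitz dependence of $S_i$ on the state then makes $\Phi$ map a closed ball of $\mathcal{B}_T$ into itself and act as a contraction there, provided $T$ is sufficiently small. The unique fixed point produces the claimed solution on $(0,T]$. Uniqueness in the original ODE class is automatic: any $C^2$ solution of \eqref{MECFSO2} satisfying \eqref{alphaICs} lies in a ball of $\mathcal{B}_{T'}$ for some $T'\le T$ by Taylor's theorem, and uniqueness of fixed points in that ball forces agreement on $(0,T']$, hence on $(0,T]$ by standard ODE uniqueness away from the singular endpoint.

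Continuous dependence of the fixed point, and therefore of $l_1,l_2,l_1',l_2'$ at each $t\in(0,T]$, on the parameters $(\alpha_1,\alpha_2,p)\in\mathbb{R}^2\times[0,1]$ is a standard consequence of the contraction mapping theorem with parameters, since $\Phi$ varies continuously in these parameters. The analysis near $t=1$ is identical after substituting $\tau=1-t$: the analogue of \eqref{smoothessphi} at the other endpoint swaps the roles of $\phi_1$ and $\phi_2$, and the correct ansatz becomes $l_1=\beta_1\tau+\tilde{s}_1$, $l_2=1-\tfrac{1}{2}\beta_2\tau^2+\tilde{s}_2$. The main technical obstacle is the verification of the delicate cancellations in the second paragraph; one must track the $t^{-2}$, $t^{-1}$ and $O(1)$ contributions from $\phi_i/\sigma$, $\phi_i'/\phi_i$ and from the ansatz for $l_i$ simultaneously to confirm that no negative power of $t$ survives on either side of the reduced system and that the residual right-hand side indeed factors as $tS_i$.
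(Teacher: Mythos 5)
Your proposal is correct and follows essentially the same route as the paper: the same ansatz $l_1=-1+\tfrac12\alpha_1t^2+s_1$, $l_2=\alpha_2t+s_2$, the same reduction via~\eqref{smoothessphi} to a system of the form $s_1''-s_1'/t=t\mathcal S_1$, $s_2''+s_2'/t-s_2/t^2=t\mathcal S_2$ (with the same key cancellation coming from $(pl_1-(1-p)\cos\tfrac{\pi t}2)^2-1$ vanishing to order $t^2$), the same weighted Banach space, and the same contraction argument built on Lemma~\ref{IV_linearised}. The only blemish is a harmless sign slip in the ansatz at $t=1$ (the conditions~\eqref{betaICs} give $l_1=-\beta_1\tau+\tilde s_1$ in the variable $\tau=1-t$).
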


\begin{proof}
Let us prove the existence of $l_1$ and $l_2$ satisfying~\eqref{MECFSO2} near $t=0$ with the initial conditions specified in the lemma. Consider the system
\begin{align}\label{MECFSO2'}
    s_1''-\frac{s_1'\phi_1'}{\phi_1}&=\frac{(2s_1+\alpha_1t^2-2)\phi_1\mathcal C_1(t,s_2,\alpha_2,p)^2}{2\sigma}+\frac{\alpha_1t\phi_1'}{\phi_1}-\alpha_1, \notag \\
    s_2''-\frac{s_2'\phi_2'}{\phi_2}&=\frac{(s_2+\alpha_2t)\phi_2\mathcal C_2(t,s_1,\alpha_1,p)^2}{4\sigma}+\frac{\alpha_2\phi_2'}{\phi_2},
\end{align}
where
\begin{align*}
\mathcal C_1(t,s_2,\alpha_2,p)&=ps_2+p\alpha_2t+(1-p)\sin\tfrac{\pi t}{2}, \\
\mathcal C_2(t,s_1,\alpha_1,p)&=2ps_1+p\alpha_1t^2-2p-2(1-p)\cos\tfrac{\pi t}{2}.
\end{align*}
One can recover solutions to~\eqref{MECFSO2} from those to~\eqref{MECFSO2'} by setting
\begin{align*}
l_1=s_1+\frac{\alpha_1t^2}2-1,\qquad l_2=s_2+\alpha_2t. 
\end{align*}
Thus, it suffices to prove the existence of $s_1$ and $s_2$ satisfying~\eqref{MECFSO2'} with the initial conditions
\begin{align}\label{ini_cond_homog}
    s_1''(0)=s_1'(0)=s_1(0)=0,\qquad s_2'(0)=s_2(0)=0.
\end{align}
Formulas~\eqref{smoothessphi} enable us to transform~\eqref{MECFSO2'} as
\begin{align*}
s_1''-\frac{s_1'}{t}&=\frac12(2s_1+\alpha_1t^2-2)\Big(\frac1{y_2(0)^2}+S_{\sigma1}\Big)\mathcal C_1(t,s_2,\alpha_2,p)^2+(s_1'+\alpha_1t)S_{\phi_1},
\\
s_2''+\frac{s_2'}{t}-\frac{s_2}{t^2}&=\frac14\Big(\frac{s_2}{t^2}+\frac{\alpha_2}t\Big)(\mathcal C_2(t,s_1,\alpha_1,p)-2)(\mathcal C_2(t,s_1,\alpha_1,p)+2)
\\
&\hphantom{=}~+
\frac14(s_2+\alpha_2t)\Big(S_{\sigma2}-\frac{y_1'''(0)}3\Big)\mathcal C_2(t,s_1,\alpha_1,p)^2+(s_2'+\alpha_2)S_{\phi_2}.
\end{align*}
Straightforward verification shows that
\begin{align*}
\mathcal C_2(t,s_1,\alpha_1,p)+2=2ps_1+O(t^2),\qquad t\to0.
\end{align*}
This means we can find smooth functions $\mathcal S_1$ and $\mathcal S_2$ from $\mathbb R^7$ to $\mathbb R$ such that~\eqref{MECFSO2'} is equivalent to
\begin{align}\label{sys_FixedPT}
    s_1''-\frac{s_1'}{t}&=t \mathcal S_1\big(t,s_1',s_1,\tfrac{s_2}t,\alpha_1,\alpha_2,p\big), \notag \\
    s_2''+\frac{s_2'}{t}-\frac{s_2}{t^2}&=t\mathcal S_2\big(t,s_2',\tfrac{s_1}{t^2},\tfrac{s_2}{t},\alpha_1,\alpha_2,p\big).
\end{align}
We will prove the solvability of this system subject to the initial conditions~\eqref{ini_cond_homog}. The existence of $l_1$ and $l_2$ satisfying~\eqref{MECFSO2}--\eqref{alphaICs} will follow immediately.

Choose $T\in(0,1)$. Let $\mathfrak B$ stand for the completion of the space of pairs $(u,v)$ of smooth compactly supported functions on $(0,T]$ with respect to the norm
\begin{align*}
     \|(u,v)\|_{\mathfrak B}=\sup_{t\in (0,T]}\left(\frac{|u(t)|}{t^2}+\frac{|u'(t)|}{t}+\frac{|v(t)|}{t^2}+\frac{|v'(t)|}{t}\right).
\end{align*}
Denote by $\mathfrak B_1$ the unit ball in $\mathfrak B$ centred at~0. We will apply the Banach fixed point theorem in $\mathfrak B_1$ to prove the solvability of~\eqref{sys_FixedPT}. Let $L$ be the map that takes $(u,v)\in\mathfrak B$ to the unique pair $(x,y)$ of $C^2$ functions on $[0,T]$ satisfying
\begin{align*}
&x''-\frac{x'}{t}=t \mathcal S_1\big(t,u',u,\tfrac{v}t,\alpha_1,\alpha_2,p\big), && x''(0)=x'(0)=x(0)=0,\\
&y''+\frac{y'}{t}-\frac{y}{t^2}=t\mathcal S_2\big(t,v',\tfrac{u}{t^2},\tfrac{v}{t},\alpha_1,\alpha_2,p\big), && y'(0)=y(0)=0.
\end{align*}
Lemma~\ref{IV_linearised} guarantees the existence of such a pair. Estimate~\eqref{apriori_est_so2} implies that $L$ is a contraction on~$\mathfrak B_1$ if $T$ is sufficiently small. Thus, $L$ has a fixed point satisfying~\eqref{sys_FixedPT}. We conclude that system~\eqref{MECFSO2} has a solution $(l_1,l_2)$ near $t=0$ subject to the initial conditions specified in the lemma. It remains to prove uniqueness and continuous dependence on $\alpha_1$, $\alpha_2$ and~$p$.

Consider a pair $(s_1,s_2)$ satisfying~\eqref{sys_FixedPT} and~\eqref{ini_cond_homog}. Given $\epsilon>0$, choose $\alpha_1^\epsilon,\alpha_2^\epsilon\in\mathbb R$ and $p^\epsilon\in[0,1]$ such that
\begin{align*}
|\alpha_1-\alpha_1^\epsilon|+|\alpha_2-\alpha_2^\epsilon|+|p-p^\epsilon|\le\epsilon. 
\end{align*}
Fix a number $\alpha^{*}$ greater than
$\max\{\left|\alpha_1\right|,\left|\alpha_2\right|,\left|\alpha_1^\epsilon\right|,\left|\alpha_2^\epsilon\right|\}$. Consider a pair $(s_1^\epsilon,s_2^\epsilon)$ such that~\eqref{sys_FixedPT} and~\eqref{ini_cond_homog} hold with $s_1$, $s_2$, $\alpha_1$, $\alpha_2$ and $p$ replaced by~$s_1^\epsilon$, $s_2^\epsilon$, $\alpha_1^\epsilon$, $\alpha_2^\epsilon$ and $p^\epsilon$. Estimate~\eqref{apriori_est_so2} and the smoothness of the functions $\mathcal S_i$ on $\mathbb R^7$ imply that
\begin{align*}
\sum_{i=1}^2\sup_{t\in(0,T^*]}&\bigg(\frac{|s_i(t)-s_i^\epsilon(t)|}{t^2}+\frac{|(s_i(t)-s_i^\epsilon(t))'|}{t}\bigg)
\\ &\le T^*C^*\sum_{i=1}^2\sup_{t\in(0,T^*]}\bigg(\frac{|s_i(t)-s_i^\epsilon(t)|}{t^2}+\frac{|(s_i(t)-s_i^\epsilon(t))'|}{t}\bigg) \\ 
&\hphantom{=}~+T^*C^*\sum_{i=1}^2|\alpha_i-\alpha_i^\epsilon|+T^*C^*|p-p^\epsilon|
\end{align*}
for every $T^*\le T$ and some $C^*>0$ depending only on $\alpha^*$ and~$\mathcal S_i$. If $T^*$ is sufficiently small, then
\begin{align*}
\sum_{i=1}^2\sup_{t\in(0,T^*]}\bigg(\frac{|s_i(t)-s_i^\epsilon(t)|}{t^2}+\frac{|(s_i(t)-s_i^\epsilon(t))'|}{t}\bigg)\le \sum_{i=1}^2|\alpha_i-\alpha_i^\epsilon|+|p-p^\epsilon|.
\end{align*}
Consequently, system~\eqref{sys_FixedPT} has at most one solution on $[0,T^*]$ subject to conditions~\eqref{ini_cond_homog}. This solution depends continuously on $\alpha_1$, $\alpha_2$ and~$p$. Using standard ODE theory, we can easily conclude that uniqueness holds on $[0,T]$. We also obtain continuous dependence on $\alpha_1$, $\alpha_2$ and~$p$ on this interval.
The same arguments can be used to produce solutions on $[1-T,1)$ for some $T>0$ with the specified conditions at $t=1$. 
\end{proof}

\subsection{A priori estimates for the global problem}

To extend the functions $l_1$ and $l_2$ constructed in Lemma~\ref{IVPWD} to larger intervals, we consider a slight modification of system~\eqref{MECFSO2}. Namely, suppose that $F$ is a smooth nonnegative function such that $F(x)=x^2$ if $|x|\le2$, $F(x)=0$ if $|x|\ge3$, and $F(x)\le 8$ for all $x\in\mathbb R$. The following result shows that system~\eqref{MECFSO2} is equivalent to
\begin{align}\label{so2eqs_F}
\bigg(\frac{l_1'}{\phi_1}\bigg)'=\frac{l_1F(pl_2+(1-p)\sin\frac{\pi t}{2})}{\sigma}, \qquad \bigg(\frac{l_2'}{\phi_2}\bigg)'=\frac{F(pl_1-(1-p)\cos\frac{\pi t}{2})l_2}{\sigma},
\end{align}
when conditions~\eqref{so2smoothnessli} are imposed at $t=0$ and~$t=1$. In a sense, it is the maximum principle for~\eqref{so2eqs_F}.

\begin{lemma}\label{so2_lem_max_princ}
Assume that $l_1$ and $l_2$ satisfy~(\ref{so2eqs_F}) on $(0,1)$ with the boundary conditions 
\begin{align}\label{half_BC0}
l_1(0)=-1,\qquad l_2(0)=0,
\qquad l_1(1)=0,\qquad l_2(1)=1. 
\end{align}
Then $-1<l_1<0$ and $0<l_2<1$ on $(0,1)$.
\end{lemma}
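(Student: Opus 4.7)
The strategy is to prove the four strict inequalities in two stages: first $l_1<0$ and $l_2>0$, and then $-1<l_1$ and $l_2<1$. Throughout, observe that $F\ge 0$, $\sigma>0$, and $\phi_1,\phi_2>0$ on $(0,1)$, so each ODE in~\eqref{so2eqs_F} is a linear second-order equation in the single unknown $l_i$, with the other unknown entering only through the nonnegative coefficient $F$.

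For the first stage, I would follow the pattern already used in Lemma~\ref{so2equationslemma}. Suppose $l_1$ attains a positive maximum at an interior point $r_0$. Then $l_1'(r_0)=0$, and from $(l_1'/\phi_1)'=l_1F/\sigma\ge0$ on any interval $[r_0,\delta]$ on which $l_1\ge0$, the quantity $l_1'/\phi_1$ is non-decreasing and thus stays $\ge 0$, so $l_1$ is non-decreasing on $[r_0,\delta]$. Letting $\delta$ be maximal, either $\delta<1$ and $l_1(\delta)=0<l_1(r_0)$, or $\delta=1$ and $l_1(1)=0<l_1(r_0)$; both are contradictions. The max value must therefore be $\le 0$. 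If $l_1(r_0)=0$ with $r_0\in(0,1)$, then $r_0$ is an interior maximum, so $l_1'(r_0)=0$ as well; viewing the first equation in~\eqref{so2eqs_F} as a linear homogeneous second-order ODE in $l_1$ (with coefficients smooth on $(0,1)$ once $l_2$ is fixed) and applying uniqueness forces $l_1\equiv 0$, contradicting $l_1(0)=-1$. This gives $l_1<0$ on $(0,1)$. The argument for $l_2>0$ is entirely analogous using $l_2(0)=0$ and $l_2(1)=1$; the sign now flips because the min (rather than max) is under scrutiny.

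For the second stage, I use the sign information just obtained. Since $l_1<0$ on $(0,1)$ and $F\ge 0$, the right-hand side $l_1F/\sigma$ of the first ODE is $\le 0$, so $l_1'/\phi_1$ is non-increasing on $(0,1)$. Suppose $l_1\le -1$ somewhere in $(0,1)$, and let $m=\min_{[0,1]}l_1$. If $m<-1$, the minimum is attained at an interior point $r^*$ where $l_1'(r^*)=0$. Monotonicity of $l_1'/\phi_1$ then gives $l_1'\ge 0$ on $(0,r^*]$, so $l_1$ is non-decreasing there, which is incompatible with $l_1(r^*)=m<-1=l_1(0)$. If instead $m=-1$ and $l_1(r_0)=-1$ at some $r_0\in(0,1)$, the same monotonicity forces $l_1\equiv -1$ on $(0,r_0]$. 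Plugging back into the ODE gives $F(pl_2+(1-p)\sin(\pi t/2))\equiv 0$ on $(0,r_0]$. Since the argument of $F$ is continuous and tends to $0$ as $t\to 0^+$, it has magnitude less than $3$ on some $(0,\epsilon)$, so the only way for $F$ to vanish there is that $pl_2(t)+(1-p)\sin(\pi t/2)\equiv 0$. For $p=0$ this is immediate nonsense; for $p>0$ it forces $l_2(t)=-\frac{1-p}{p}\sin(\pi t/2)\le 0$ on $(0,\epsilon)$, contradicting the first-stage result $l_2>0$. Hence $l_1>-1$ on $(0,1)$. The proof of $l_2<1$ mirrors this, using that $(l_2'/\phi_2)'=F(\cdot)l_2/\sigma\ge 0$ makes $l_2'/\phi_2$ non-decreasing, and ruling out the degenerate case $l_2\equiv 1$ on some $[r_0,1)$ by observing that it forces $pl_1(t)=(1-p)\cos(\pi t/2)>0$ near $t=1$, contradicting $l_1<0$.

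The main obstacle is the borderline case where the extremum of $l_i$ equals exactly the claimed bound $\pm 1$. Here the sign of $l_iF/\sigma$ degenerates and the $\delta$-argument alone only yields that $l_i$ is identically constant on a whole interval; the contradiction is extracted by substituting this constant back into the ODE, which forces the coefficient $F$ to vanish on that interval, and then using the already-established first-stage sign of the \emph{other} variable together with the explicit form of the argument of $F$ to reach an impossibility. This coupling between the two equations through $F$ is the only nontrivial ingredient.
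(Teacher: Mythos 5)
Your proof is correct and follows the same maximum-principle strategy as the paper, which spells out only the bound $l_1<0$ and dismisses the remaining three inequalities as ``analogous''; your explicit treatment of the borderline cases $l_1\equiv-1$ and $l_2\equiv1$ on a subinterval --- where constancy forces $F\equiv0$ and hence a sign contradiction with the other function --- is precisely the detail needed to make that analogous reasoning rigorous, since the ODE-uniqueness argument used for the $\max l_1=0$ case does not transfer to an extremum at $\pm1$ (the equation is not homogeneous about those values). One pedantic point: to conclude that $F$ vanishes only when its argument does, shrink $\epsilon$ until the argument has magnitude at most $2$ (where $F(x)=x^2$), not merely less than $3$, since $F$ is only assumed nonnegative for $2<|x|<3$.
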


The lemma implies that solutions to~\eqref{so2eqs_F}--\eqref{half_BC0} also solve~(\ref{MECFSO2}).

\begin{proof}
We argue as in the proof of Lemma \ref{so2equationslemma}.
Let us show that $l_1<0$ on $(0,1)$. Assuming the contrary, we pick a point $r_0\in(0,1)$ where $l_1$ attains its maximum. If $l_1(r_0)$ vanishes, then $l_1$ is identically~0 by uniqueness of solutions to an ODE. However, this contradicts~\eqref{half_BC0}. Thus, we may assume $l_1(r_0)>0$. Let $\delta$ be the largest number in $[0,1-r_0]$ such that $l_1\ge0$ on~$[r_0,r_0+\delta)$. The first equation in~\eqref{so2eqs_F} implies
\begin{align*}
l_1''=\frac{l_1'\phi_1'}{\phi_1}+
    \frac{l_1F(pl_2+(1-p)\sin\frac{\pi t}{2})\phi_1}{\sigma}.
\end{align*}
Consequently, $l_1'\ge0$ on $[r_0,r_0+\delta)$, which means $l_1(r_0+\delta)\ge l_1(r_0)>0$. However, this is impossible in light of the maximality of $\delta$ and the conditions~\eqref{half_BC0}. We conclude that $l_1<0$ on $(0,1)$. Analogous reasoning shows that $l_1>-1$ and $0<l_2<1$.
\end{proof}

Lemma~\ref{IVPWD} and the global Lipschitz continuity of $F$ on $\mathbb R$ imply that system~\eqref{so2eqs_F} possesses a unique solution on $(0,\frac12]$ such that conditions~\eqref{alphaICs} hold and a unique solution on $[\frac12,1)$ such that~\eqref{betaICs} hold. We denote these solutions $(l^{p,\alpha}_{1-},l^{p,\alpha}_{2-})$ and  $(l^{p,\beta}_{1+},l^{p,\beta}_{2+})$, respectively. They depend continuously on $\alpha=(\alpha_1,\alpha_2)$,  
$\beta=(\beta_1,\beta_2)$ and $p$. We will show that, for certain values of $\alpha_1$, $\alpha_2$, $\beta_1$ and $\beta_2$, they can be combined to produce a solution to problem~\eqref{so2equationsli}--\eqref{so2smoothnessli} on~$[0,1]$. In light of Lemma~\ref{so2equationslemma}, this will complete the proof of Theorem~\ref{so2existence}.

Consider the function $G_p:\mathbb{R}^4\to \mathbb{R}^4$ given by 
\begin{align*}
G_p(\alpha,\beta)&=G_p(\alpha_1,\alpha_2,\beta_1,\beta_2)
\\
&=\big((l^{p,\beta}_{1+}-l^{p,\alpha}_{1-})(\tfrac12),(l^{p,\beta}_{1+}-l^{p,\alpha}_{1-})'(\tfrac12),(l^{p,\beta}_{2+}-l^{p,\alpha}_{2-})(\tfrac12),(l^{p,\beta}_{2+}-l^{p,\alpha}_{2-})'(\tfrac12)\big).
\end{align*}
We will show that $G_1$ has a zero at some $(\hat\alpha,\hat\beta)=(\hat\alpha_{1},\hat\alpha_{2},\hat\beta_{1},\hat\beta_{2})\in\mathbb R^4$. Then the following smooth functions would solve~\eqref{so2eqs_F} on $(0,1)$, subject to conditions~\eqref{so2smoothnessli}:
\begin{align}\label{sol_formula}
l_1=
\begin{cases}
l^{1,\hat\alpha}_{1-} & \mbox{on}~\big[0,\frac12\big], \\ l^{1,\hat\beta}_{1+} & \mbox{on}~\big[\frac12,1\big],
\end{cases}
\qquad
l_2=
\begin{cases}
l^{1,\hat\alpha}_{2-} & \mbox{on}~\big[0,\frac12\big], \\ l^{1,\hat\beta}_{2+} & \mbox{on}~\big[\frac12,1\big].
\end{cases}
\end{align}
By Lemma~\ref{so2_lem_max_princ}, they would satisfy~\eqref{so2equationsli}--\eqref{so2smoothnessli}.

We will use Brouwer degree theory to prove the existence of $(\hat\alpha,\hat\beta)$. This requires showing that the zeros of $G_p$ are all contained in some bounded set. Denote by $\Omega_R$ the open disc in $\mathbb R^2$ of radius $R>0$ centred at the origin.

\begin{lemma}\label{so2uniformbounds}
There exists a number $R_0>0$, independent of $p\in [0,1]$, such that every zero of the function $G_p$ lies in $\Omega_{R_0}\times\Omega_{R_0}$.
\end{lemma}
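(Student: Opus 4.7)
A zero $(\alpha,\beta)$ of $G_p$ assembles the near-endpoint solutions produced by Lemma~\ref{IVPWD} into a smooth pair $(l_1,l_2)$ satisfying the full boundary-value problem~\eqref{so2eqs_F},~\eqref{so2smoothnessli} on $[0,1]$, with parameters $(l_1''(0),l_2'(0),l_1'(1),-l_2''(1)) = (\alpha_1,\alpha_2,\beta_1,\beta_2)$. By Lemma~\ref{so2_lem_max_princ}, every such solution satisfies $-1\le l_1 \le 0$ and $0\le l_2\le 1$. Setting $A = l_1'/\phi_1$ and $B = l_2'/\phi_2$, the signs $l_1\le 0$ and $l_2\ge 0$ in the right-hand side of~\eqref{so2eqs_F} show that $A$ is non-increasing and $B$ non-decreasing on $(0,1)$. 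Using the asymptotics $\phi_1\sim r/y_2(0)$ and $\phi_2\sim y_2(0)/r$ near $r=0$, and the analogous ones near $r=1$, one finds $A(0)=\alpha_1 y_2(0)$, $A(1)=0$, $B(0)=0$, $B(1)=\beta_2 y_1(1)$. Monotonicity then forces $A,B\ge 0$, so $l_1',l_2'\ge 0$ and each of $\alpha_1,\alpha_2,\beta_1,\beta_2$ is non-negative; only upper bounds remain.

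The boundary conditions give the integral identities $\int_0^1\phi_1 A\,dr = l_1(1)-l_1(0)=1$ and $\int_0^1\phi_2 B\,dr = 1$. Combined with the monotonicity of $A$, for any fixed $r_*\in(0,1)$ we have
\[
A(r_*)\int_0^{r_*}\phi_1\,ds \le \int_0^{r_*}\phi_1 A\,ds = l_1(r_*)+1 \le 1,
\]
so $A(r_*) \le 1/\int_0^{r_*}\phi_1\,ds$, a bound depending only on $Y$. The symmetric argument gives the analogous bound on $B(1-r_*)$. These interior bounds, together with the smoothness of the coefficients of~\eqref{so2eqs_F} on compact subsets of $(0,1)$ and standard ODE theory applied to the integrated form $A(r) = \int_r^1 (-l_1)(pl_2+(1-p)\sin(\pi s/2))^2/\sigma\,ds$ and its counterpart for $B$, yield uniform $C^k_{\text{loc}}((0,1))$ control of $(l_1,l_2)$, independently of $p\in[0,1]$.

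I would complete the proof by contradiction. Assume the bound fails: there exist sequences $p_n\in[0,1]$ and $(\alpha^n,\beta^n)\in G_{p_n}^{-1}(0)$ with $|(\alpha^n,\beta^n)|\to\infty$. From the previous step, the corresponding solutions $(l_1^n,l_2^n)$ admit a subsequence converging in $C^k_{\text{loc}}((0,1))$ to a smooth pair $(l_1^\infty,l_2^\infty)$ solving~\eqref{so2eqs_F} for some $p^\infty\in[0,1]$, and still satisfying $-1\le l_1^\infty\le 0$, $0\le l_2^\infty\le 1$. One then passes to the limit in the integral representations of $A^n,B^n$ using dominated convergence (ensured by the monotonicity and the precise endpoint asymptotics of $\phi_1,\phi_2,\sigma$, which make the integrands dominated by $L^1$ functions on $(0,1)$) to conclude that $(l_1^\infty,l_2^\infty)$ extends smoothly to $[0,1]$ with the prescribed boundary data. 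Consequently its associated boundary parameters are finite, contradicting, after a further subsequence extraction, the divergence $|(\alpha^n,\beta^n)|\to\infty$.

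\textbf{Main obstacle.} The delicate step is the last one: controlling the boundary behaviour of $(l_1^n,l_2^n)$ well enough that $C^k_{\text{loc}}$ convergence upgrades to matching of the boundary parameters. A divergent sequence $\alpha_2^n\to\infty$ would force $l_2^n$ to develop a thin transition layer near $r=0$ in which $l_2^n$ rises from $0$ to a value close to $1$ in a shrinking interval, and analogously for $\beta_1^n\to\infty$ near $r=1$. Ruling this out requires exploiting the exact cancellation between the singularities of $\phi_j$ and $\sigma$ and the vanishing of the relevant numerators at the endpoints, and in particular uses both equations together: the estimate for $\alpha_1$ couples to $\alpha_2$ through $F_1$, while that for $\beta_2$ couples to $\beta_1$ through $F_2$, and only the coupled system closes up into a uniform bound.
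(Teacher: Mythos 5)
Your soft a priori information is all correct and consistent with the paper: the maximum principle (Lemma~\ref{so2_lem_max_princ}) gives $-1\le l_1\le 0$ and $0\le l_2\le1$, the quantities $l_1'/\phi_1$ and $l_2'/\phi_2$ are indeed monotone with the endpoint values you compute, and this yields non-negativity of the parameters together with uniform $C^k$ bounds on compact subsets of $(0,1)$. But the proof is not complete: the entire content of the lemma is the step you defer to the "main obstacle" paragraph, and that step is not a technicality one can wave at. Your compactness scheme cannot produce the contradiction as described. If, say, $\alpha_2^n=l_2^{n\prime}(0)\to\infty$, the functions $l_2^n$ can a priori develop a transition layer of width $\sim1/\alpha_2^n$ near $r=0$ and still converge in $C^k_{\mathrm{loc}}((0,1))$ to a perfectly smooth limit with finite boundary parameters; $C^k_{\mathrm{loc}}$ convergence on the open interval simply does not see the boundary data, so "the limit has finite parameters" does not contradict "$|(\alpha^n,\beta^n)|\to\infty$". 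The dominated convergence step you invoke to transfer the boundary data to the limit also fails precisely in the divergent scenario: near $r=0$ the integrand for $A^n$ behaves like $(\alpha_2^n)^2 s/y_2(0)$, so there is no $n$-independent $L^1$ dominating function, and a direct endpoint computation of $\alpha_2=l_2'(0)$ from the integral representation of $B$ is circular ($B(r)\approx\alpha_2 r/y_2(0)$ reproduces $\alpha_2$ with no gain). So the argument, as written, does not close.

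What is actually needed — and what the paper does — is a blow-up analysis at the scale of the putative layer. One sets $\lambda_k=\bigl(\max_{[0,1/2]}(|l_{k1}'|^2+|l_{k1}''|+|l_{k2}'|^2)\bigr)^{-1/2}$, picks $\tau_k$ realising the maximum, and rescales $t\mapsto\lambda_kt$ (about $0$ or about $\tau_k$, according to whether $\tau_k/\lambda_k$ stays bounded). The singular linear theory of Lemma~\ref{IV_linearised} then shows that on the rescaled interval the solutions are forced to be close to the exact profiles $\tfrac{\alpha_1t^2}{2}-1$ and $\alpha_2t$ (respectively, to affine functions in the interior case); a nontrivial profile must then exceed the bounds $|l_1|\le1$, $|l_2|\le1$ at a bounded rescaled time, contradicting Lemma~\ref{so2_lem_max_princ}, while a trivial profile contradicts the normalisation that the rescaled derivative quantity equals $1$ at $\tau_k/\lambda_k$. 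In other words, the layer is excluded not by cancellations in the coefficients alone but by the quantitative rigidity of the ODE at the blow-up scale combined with the global $L^\infty$ bound. This is the missing core of the proof, and your proposal acknowledges rather than supplies it.
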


\begin{proof}
\noindent\textit{Step 1.}
Assume that the assertion of the lemma fails to hold. Then there exist sequences $(\alpha_{k1})_{k=1}^\infty$, $(\alpha_{k2})_{k=1}^\infty$, $(\beta_{k1})_{k=1}^\infty$, $(\beta_{k2})_{k=1}^\infty$ and $(p_k)_{k=1}^\infty\subseteq [0,1]$ such that
\begin{align*}
G_{p_k}(\alpha_{k1},\alpha_{k2},\beta_{k1},\beta_{k2})=0,\qquad \lim_{k\to\infty}\max\{\alpha_{k1},\alpha_{k2},\beta_{k1},\beta_{k2}\}=\infty.
\end{align*}
Let $l_{k1}$ and $l_{k2}$ be functions on $[0,1]$ that solve equations~\eqref{so2eqs_F}, subject to the conditions~\eqref{so2smoothnessli}, and satisfy the formulas
\begin{align*}
l_{k1}''(0)=\alpha_{k1},\qquad l_{k2}'(0)=\alpha_{k2},\qquad l_{k1}'(1)=\beta_{k1},\qquad l_{k2}''(1)=-\beta_{k1}.
\end{align*}
Lemma~\ref{so2_lem_max_princ} implies that $|l_{k1}|\le1$ and $|l_{k2}|\le1$ for all $k\in\mathbb N$. We will show that, after appropriate re-scaling, $l_{k1}$ and $l_{k2}$ or some of their derivatives converge uniformly on every bounded interval. This will lead to a contradiction, completing the proof.

\vspace{5pt}
\noindent\textit{Step 2.}
Let us introduce some notation. Assume that
\begin{align}\label{lim_alpha12}
\lim_{k\to\infty}\max\{\alpha_{k1},\alpha_{k2}\}=\infty.
\end{align}
The case where $\max\{\alpha_{k1},\alpha_{k2}\}$ remains bounded but $\max\{\beta_{k1},\beta_{k2}\}$ goes to infinity is analogous. Choose $\tau_k\in\big[0,\frac12\big]$ so that 
\begin{align*}
|l_{k1}'(\tau_k)|^2+|l_{k1}''(\tau_k)|+|l_{k2}'(\tau_k)|^2=\max_{\tau\in[0,\frac12]}\big(|l_{k1}'(\tau)|^2+|l_{k1}''(\tau)|+|l_{k2}'(\tau)|^2\big).
\end{align*}
Uniqueness of solutions to ordinary differential equations and conditions~\eqref{so2smoothnessli} imply that the quantity on the right-hand side is strictly positive. Denote
\begin{align*}
\lambda_k=\frac1{\sqrt{|l_{k1}'(\tau_k)|^2+|l_{k1}''(\tau_k)|+|l_{k2}'(\tau_k)|^2}}.
\end{align*}
By passing to subsequences if necessary, we may assume that $(\lambda_k)_{k=1}^\infty$, $(\alpha_{k1}\lambda_k^2)_{k=1}^\infty$, $(\alpha_{k2}\lambda_k)_{k=1}^\infty$ and $\big(\frac{\tau_k}{\lambda_k}\big)_{k=1}^\infty$ are all monotone. Formula~\eqref{lim_alpha12} implies that $(\lambda_k)_{k=1}^\infty$ goes to~0. Moreover, $(\alpha_{k1}\lambda_k^2)_{k=1}^\infty$ and $(\alpha_{k2}\lambda_k)_{k=1}^\infty$ are convergent in $[0,1]$. 

\vspace{5pt}\noindent\textit{Step 3.} Assume that
\begin{align}\label{case1_so2_est}
\lim_{k\to\infty}\frac{\tau_k}{\lambda_k}<\infty.
\end{align}
Consider the functions $\tilde{l}_{k1}$ and $\tilde{l}_{k2}$ obtained from $l_{k1}$ and $l_{k2}$ by re-scaling the independent variable as follows:
\begin{align*}
\tilde{l}_{k1}(t)=l_{k1}(\lambda_k t), \qquad \tilde{l}_{k2}(t)=l_{k2}(\lambda_k t),\qquad t\in\big[0,\tfrac1{\lambda_k}\big].
\end{align*}
Our next goal is to show that $\tilde l_{k1}$ converges to $-1$ in the $C^2$-norm on $[0,T]$ for every $T>0$. Clearly,
\begin{align}\label{norm_le1}
\max_{t\in[0,\frac1{2\lambda_k}]}\big(|\tilde l_{k1}'(t)|^2+|\tilde l_{k1}''(t)|+|\tilde l_{k2}'(t)|^2\big) =\lambda_k^2\max_{\tau\in[0,\frac12]}\big(|l_{k1}'(\tau)|^2+|l_{k1}''(\tau)|+|l_{k2}'(\tau)|^2\big)=1.
\end{align}
In light of~\eqref{so2smoothnessli},
\begin{align}\label{l1l2_MVT}
|\tilde l_{k1}'(t)|\le t\max_{\tau\in[0,t]}|\tilde l_{k1}''(\tau)|\le t,\qquad |\tilde l_{k2}(t)|\le t\max_{\tau\in[0,t]}|\tilde l_{k2}'(\tau)|\le t, \qquad t\in\big[0,\tfrac1{2\lambda_k}\big].
\end{align}
The function
\begin{align*}
s_{k1}(t)=\tilde l_{k1}(t)-\frac{\alpha_{k1}\lambda_k^2t^2}2+1
\end{align*}
satisfies the formula
\begin{align*}
\bigg|s_{k1}''(t)-\frac{s_{k1}'(t)}{t}\bigg|&=\bigg|\tilde{l}_{k1}''(t)-\frac{\tilde{l}_{k1}'(t)}{t}\bigg|=\lambda_k^2\bigg|l_{k1}''(\tau)-\frac{l_{k1}'(\tau)}{\tau}\bigg|\bigg|_{\tau=\lambda_kt}
\\
&=\lambda_k^2 \bigg|\frac{S_{\phi_1}(\lambda_k t)\tilde{l}_{k1}'(t)}{\lambda_k}+\frac{\phi_1(\lambda_k t)}{\sigma(\lambda_k t)}\tilde{l}_{k1}(t)\big(p\tilde{l}_{k2}(t)+(1-p)\sin\tfrac{\lambda_k\pi t}{2}\big)^2\bigg|
\\
&\le\lambda_k|S_{\phi_1}(\lambda_k t)|t+\lambda_k^2\bigg|\frac{\phi_1(\lambda_k t)}{\sigma(\lambda_k t)}\bigg|\left(pt+(1-p)\big|\sin\tfrac{\lambda_k\pi t}{2}\big|\right)^2 \le C_1\lambda_k t,
\qquad t\in\big(0,\tfrac1{2\lambda_k}\big],
\end{align*}
where $S_{\phi_1}$ is given by~\eqref{smoothessphi} and $C_1>0$ is a constant independent of~$k$. Lemma~\ref{IV_linearised} implies that
\begin{align*}
\lim_{k\to\infty}\sup_{t\in (0,T]}\bigg(\frac{|s_{k1}(t)|}{t^2}+\frac{\left|s_{k1}'(t)\right|}{t}\bigg)=0
\end{align*}
for each $T>0$. This means
$(\tilde l_{k1})_{k=1}^\infty$ converges to $\frac{\alpha_1t^2}2-1$ in the $C^2$-norm on $[0,T]$ as $k\to\infty$ with
\begin{align*}
\alpha_1=\lim_{k\to\infty}\alpha_{k1}\lambda_k^2\in [0,1].
\end{align*}
Moreover, if $\alpha_1\ne0$, then
\begin{align*}
\lim_{k\to\infty}\tilde l_{k1}\big(\tfrac3{\sqrt{\alpha_1}}\big)=\tfrac72>1,
\end{align*}
which contradicts Lemma~\ref{so2_lem_max_princ}. Thus, $(\tilde l_{k1})_{k=1}^\infty$ converges to $-1$ in the $C^2$-norm on $[0,T]$.

\vspace{5pt}\noindent\textit{Step 4.}
Assume, as above, that~\eqref{case1_so2_est} holds. Our next goal is to show that $\tilde l_{k2}$ converges to $0$. Keeping~\eqref{norm_le1}, \eqref{l1l2_MVT} and~\eqref{so2smoothnessli} in mind, we observe that the function
\begin{align*}
s_{k2}(t)=\tilde l_{k2}(t)-\alpha_{k2}\lambda_kt
\end{align*}
satisfies
\begin{align*}
\bigg|s_{k2}''(t)+\frac{s_{k2}'(t)}{t}-\frac{s_{k2}(t)}{t^2}\bigg|&=\lambda_k^2\bigg|l_2''(\tau) +\frac{l_2'(\tau)}{\tau}-\frac{l_2(\tau)}{\tau^2}\bigg|\bigg|_{\tau=\lambda_k t} 
\\
&\le
\lambda_k^2\bigg|\frac{S_{\phi_2}(\lambda_kt)\tilde{l}'_{k2}(t)}{\lambda_k}+\frac{\tilde{l}_{k2}(t)}{\lambda_k^2 t^2}\big(\big(p\tilde{l}_{k1}(t)-(1-p)\cos\tfrac{\lambda_k\pi t}{2}\big)^2-1\big)\bigg|\\
&\hphantom{=}~+\lambda_k^2\bigg|-\frac{y_1'''(0)}3+S_{\sigma_2}(\lambda_kt)\bigg|\big|\tilde{l}_{k2}(t)\big(p\tilde{l}_{k1}(t)-(1-p)\cos\tfrac{\lambda_k\pi t}{2}\big)^2\big|
\\
&\le
\lambda_k^2t\sup_{\tau\in[0,\frac12]}\bigg|\frac{S_{\phi_2}(\tau)}{\tau}\bigg|+\lambda_k^2t\sup_{\tau\in[0,\frac12]}\bigg|-\frac{y_1'''(0)}3+S_{\sigma_2}(\tau)\bigg|
\\
&\hphantom{=}~+\frac2{t}\big|p(\tilde{l}_{k1}(t)+1)-(1-p)\big(\cos\tfrac{\lambda_k\pi t}{2}-1\big)\big)\big|
\\
&\le C_2 \lambda_k^2t+\frac2{t}|\tilde{l}_{k1}(t)+1|
\\
&\le C_2\lambda_k^2t+2t\max_{\tau\in[0,t]}|\tilde l_{k1}''(\tau)|,\qquad t\in\big(0,\tfrac1{2\lambda_k}\big],
\end{align*}
with $C_2>0$ independent of~$k$. We proved at Step~3 that
\begin{align*}
\lim_{k\to\infty}\max_{\tau\in[0,T]}|\tilde l_{k1}''(\tau)|=0
\end{align*}
for each $T>0$. Consequently, by Lemma~\ref{IV_linearised},
\begin{align*}
\lim_{k\to\infty}\sup_{t\in (0,T]}\bigg(\frac{|s_{k2}(t)|}{t^2}+\frac{\left|s_{k2}'(t)\right|}{t}\bigg)=0.
\end{align*}
This means
$(\tilde l_{k2})_{k=1}^\infty$ converges to $\alpha_2t$ in the $C^2$-norm on $[0,T]$ as $k\to\infty$ with
\begin{align*}
\alpha_2=\lim_{k\to\infty}\alpha_{k2}\lambda_k.
\end{align*}
If $\alpha_2\ne0$, then
\begin{align*}
\lim_{k\to\infty}\tilde l_{k2}\big(\tfrac2{\alpha_2}\big)=2>1,
\end{align*}
which contradicts Lemma~\ref{so2_lem_max_princ}. Thus, $(\tilde l_{k1})_{k=1}^\infty$ converges to $0$ in the $C^2$-norm on $[0,T]$.

\vspace{5pt}\noindent\textit{Step 5.}
Assuming~\eqref{case1_so2_est} holds, fix $T>0$ such that 
\begin{align*}
T>\sup_{k\in\mathbb N}\frac{\tau_k}{\lambda_k}.
\end{align*}
Then
\begin{align*}
\max_{t\in[0,T]}\big(|\tilde l_{k1}'(t)|^2+|\tilde l_{k1}''(t)|+|\tilde l_{k2}'(t)|^2\big)
&\ge\big|\tilde l_{k1}'\big(\tfrac{\tau_k}{\lambda_k}\big)\big|^2+\big|\tilde l_{k1}''\big(\tfrac{\tau_k}{\lambda_k}\big)\big|+\big|\tilde l_{k2}'\big(\tfrac{\tau_k}{\lambda_k}\big)\big|^2
\\
&=\lambda_k^2\big(|l_{k1}'(\tau_k)|^2\big)+|l_{k1}''(\tau_k)|+|l_{k2}'(\tau_k)|^2=1
\end{align*}
for all~$k$. However, as we showed at Steps~3 and 4, $(\tilde l_{k1})_{k=1}^\infty$ must converge to $-1$ in the $C^2$-norm on~$[0,T]$, while $(\tilde l_{k2})_{k=1}^\infty$ converges to~$0$. This contradiction proves the lemma when~\eqref{case1_so2_est} holds.

\vspace{5pt}\noindent\textit{Step 6.}
Assume that
\begin{align}\label{case2_so2_est}
\lim_{k\to\infty}\frac{\tau_k}{\lambda_k}=\infty.    
\end{align}
Consider the functions $\bar{l}_{k1}$ and $\bar{l}_{k2}$ defined by
\begin{align*}
\bar{l}_{k1}(t)=l_{k1}(\tau_k+\lambda_kt),\qquad \bar{l}_{k2}(t)=l_{k2}(\tau_k+\lambda_kt),\qquad t\in\Big[-\frac{\tau_k}{\lambda_k},\frac{1-2\tau_k}{2\lambda_k}\Big].
\end{align*}
Clearly,
\begin{align*}
\max_{t\in\big[-\frac{\tau_k}{\lambda_k},\frac{1-2\tau_k}{2\lambda_k}\big]}\big(|\bar l_{k1}'(t)|^2&+|\bar l_{k1}''(t)|+|\bar l_{k2}'(t)|^2\big)  
\\
&=\lambda_k^2\max_{\tau\in[0,\frac12]}\big(|l_{k1}'(\tau)|^2+|l_{k1}''(\tau)|+|l_{k2}'(\tau)|^2\big)=1.
\end{align*}
Using~\eqref{smoothessphi}, we find
\begin{align*}
|\bar{l}_{k1}''(t)|&=\lambda_k^2|l_{k1}''(\tau)||_{\tau=\tau_k+\lambda_kt}
\\
&=\lambda_k^2\bigg|\frac{\phi_1'(\tau)}{\phi_1(\tau)}l_{k1}'(\tau)+\frac{\phi_1(\tau)}{\sigma(\tau)}l_{k1}(\tau)\big(pl_{k2}(\tau) +(1-p)\sin\tfrac{\pi\tau}{2}\big)^2\bigg|_{\tau=\tau_k+\lambda_kt}
\\
&\le\lambda_k\bigg|\frac{\phi_1'(\tau)}{\phi_1(\tau)}\bigg|_{\tau=\tau_k+\lambda_kt}+\lambda_k^2\bigg|\frac{\phi_1(\tau)}{\sigma(\tau)}\bigg|_{\tau=\tau_k+\lambda_kt}
\\
&\le\frac{\lambda_k}{\tau_k+\lambda_kt}+\lambda_k\sup_{\tau\in[0,\frac12]}|S_{\phi_1}(\tau)|+\frac{\lambda_k^2}{y_2(0)^2}+\lambda_k^2\sup_{\tau\in[0,\frac12]}|S_{\sigma1}(\tau)|
\end{align*}
and, similarly,
\begin{align*}
|\bar{l}_{k2}''(t)|
&\le\lambda_k\bigg|\frac{\phi_2'(\tau)}{\phi_2(\tau)}\bigg|_{\tau=\tau_k+\lambda_kt}+\lambda_k^2\bigg|\frac{\phi_2(\tau)}{\sigma(\tau)}\bigg|_{\tau=\tau_k+\lambda_kt}
\\
&\le\frac{\lambda_k}{\tau_k+\lambda_kt}+\lambda_k\sup_{\tau\in[0,\frac12]}|S_{\phi_2}(\tau)|+\frac{\lambda_k^2}{(\tau_k+\lambda_kt)^2}+\frac{\lambda_k^2\left|y_1'''(0)\right|}{3}+\lambda_k^2\sup_{\tau\in[0,\frac12]}|S_{\sigma2}(\tau)|
\end{align*}
on the interval $\big(-\frac{\tau_k}{\lambda_k},\frac{1-2\tau_k}{2\lambda_k}\big]$. These estimates and formula~\eqref{case2_so2_est} imply that $\bar l_{k1}''$ and $\bar l_{k2}''$ converge to 0 uniformly on $[-T,0]$ for each~$T>0$.

\vspace{5pt}\noindent\textit{Step 7.}
We have
\begin{align*}
|\bar l_{k1}'(0)|^2+|\bar l_{k1}''(0)|+|\bar l_{k2}'(0)|^2 =\lambda_k^2\big(|l_{k1}'(\tau_k)|^2+|l_{k1}''(\tau_k)|+|l_{k2}'(\tau_k)|^2\big)=1.
\end{align*}
Passing to subsequences if necessary, we may assume that
\begin{align*}
\lim_{k\to\infty}\bar l_{ki}'(0)=\eta_i\in[-1,1],\qquad i=1,2.
\end{align*}
The convergence of $\bar l_{k1}''$ to 0 implies that $\eta_1^2+\eta_2^2=1$. For each $T>0$,
\begin{align*}
\lim_{k\to\infty}\max_{t\in[-T,0]}|\bar l_{ki}'(t)-\eta_i|=\lim_{k\to\infty}\max_{t\in[-T,0]}|\bar l_{ki}'(t)-\bar l_{ki}'(0)|\le T\lim_{k\to\infty}\max_{t\in[-T,0]}|\bar l_{ki}''(t)|=0.
\end{align*}
If $T=\frac9{|\eta_1|+|\eta_2|}$ and $k$ is sufficiently large, then
\begin{align*}
|\bar l_{k1}(0)-\bar l_{k1}(-T)|&+|\bar l_{k2}(0)-\bar l_{k2}(-T)|
\\
&\ge T\Big(\min_{t\in[-T,0]}|\bar l_{k1}'(t)|+\min_{t\in[-T,0]}|\bar l_{k2}'(t)|\Big)
\ge\frac{T(|\eta_1|+|\eta_2|)}2>4.
\end{align*}
At the same time, by Lemma~\ref{so2_lem_max_princ},
\begin{align*}
|\bar l_{k1}(0)-\bar l_{k1}(-T)|+|\bar l_{k2}(0)-\bar l_{k2}(-T)|\le|\bar l_{k1}(0)|+|\bar l_{k1}(-T)|+|\bar l_{k2}(0)|+|\bar l_{k2}(-T)|\le4.
\end{align*}
This contradiction completes the proof.
\end{proof}

If $p=0$, system~\eqref{so2eqs_F} becomes 
\begin{align}\label{eqs_so2_p=0}
\bigg(\frac{l_1'}{\phi_1}\bigg)'=\frac{l_1\sin^2\frac{\pi t}{2}}{\sigma}, \qquad \bigg(\frac{l_2'}{\phi_2}\bigg)'=\frac{l_2\cos^2\frac{\pi t}{2}}{\sigma}.
\end{align}
In order to use Brouwer degree theory, we will need the following two results concerning solutions to equations~\eqref{eqs_so2_p=0}. The first one is a variant of the maximum principle for~\eqref{eqs_so2_p=0}.

\begin{lemma}\label{lem_mp_p=0}
Assume that $l_1$ and $l_2$ are continuous on $\big[0,\frac12\big]$ and satisfy~(\ref{eqs_so2_p=0}) on $\big(0,\frac12\big)$. The following statements hold:
\begin{enumerate}
\item
If $l_1(0)=-1$ and $l_1(\frac12)\le 0$, then $-1<l_1<0$ on $\big(0,\tfrac12\big)$.
\item If $l_2(0)=0$ and $l_2(\frac{1}{2})\le 1$, then $0<l_2<1$ on $\big(0,\frac{1}{2}\big)$.
\end{enumerate}
Similarly, assume that $l_1$ and $l_2$ are continuous on $\big[\frac12,1\big]$ and satisfy~(\ref{eqs_so2_p=0}) on $\big(\frac12,1\big)$. Then we have:
\begin{enumerate}
\item
If $l_1(\frac12)\ge -1$ and $l_1(1)=0$, then $-1<l_1<0$ on $\big(\tfrac12,1\big)$.
\item If $l_2(\frac{1}{2})\ge 0$ and $l_2(1)=1$, then $0<l_2<1$ on $\big(\frac{1}{2},1\big)$. 
\end{enumerate}
\end{lemma}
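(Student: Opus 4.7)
The plan is to apply the maximum principle separately to each of the two decoupled ODEs in (\ref{eqs_so2_p=0}). Since the two equations are independent of one another, the assertions about $l_1$ and about $l_2$ can be handled in turn; moreover, the arguments on $[\tfrac12,1]$ follow from those on $[0,\tfrac12]$ via the reflection $r\mapsto 1-r$ (which swaps the roles of $\sin(\pi r/2)$ and $\cos(\pi r/2)$ and the two singular orbits). So it suffices to treat one case, say the first assertion about $l_1$ on $[0,\tfrac12]$.

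First I would rewrite the equation in expanded form as
\[
l_1''(r)=\frac{\phi_1'(r)}{\phi_1(r)}\,l_1'(r)+c_1(r)\phi_1(r)\,l_1(r),\qquad c_1(r):=\frac{\sin^2(\pi r/2)}{\sigma(r)},
\]
noting that $c_1\ge 0$ on $[0,\tfrac12]$ and $c_1>0$ on $(0,\tfrac12]$, while $\phi_1>0$ on $(0,\tfrac12]$ with only a regular singular point at $r=0$ (since $\phi_1'/\phi_1\sim 1/r$). This is a standard linear second-order equation at every interior point, so the weak and strong maximum principles apply.

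For the upper bound $l_1<0$, I would argue by contradiction with the weak maximum principle. If $l_1$ attained a strictly positive maximum at an interior point $r_*\in(0,\tfrac12)$, then $l_1'(r_*)=0$ and $l_1''(r_*)\le 0$; however, the ODE gives $l_1''(r_*)=c_1(r_*)\phi_1(r_*)l_1(r_*)>0$, a contradiction. Hence $\max_{[0,1/2]}l_1\le\max(l_1(0),l_1(\tfrac12))\le 0$. The strict inequality $l_1<0$ on $(0,\tfrac12)$ then follows from ODE uniqueness at a regular point: an interior zero of $l_1$ would be an interior maximum with $l_1'=0$ there, forcing $l_1\equiv 0$ locally and hence globally, which is incompatible with $l_1(0)=-1$.

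For the lower bound $l_1>-1$, I would exploit the strict upper bound just established: since $l_1<0$ and $c_1>0$ on $(0,\tfrac12)$, we have $(l_1'/\phi_1)'=c_1l_1<0$, so $l_1'/\phi_1$ is strictly decreasing and, since $\phi_1>0$, $l_1'$ changes sign at most once (from positive to negative). Consequently $l_1$ is unimodal on $[0,\tfrac12]$, its minimum is attained only at the endpoints, and the ODE precludes the constant $l_1\equiv -1$; together with $l_1(0)=-1$, this yields $l_1(r)>-1$ for $r\in(0,\tfrac12)$. The main technical subtlety I expect is the regular singular behavior at $r=0$, which prevents a naive ``strong maximum principle at the boundary'' argument; fortunately the strict inequalities are all obtained by working at interior regular points where the standard theory applies cleanly. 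The remaining three assertions (for $l_2$ on $[0,\tfrac12]$ and for $l_1,l_2$ on $[\tfrac12,1]$) are proved by identical arguments after the obvious substitutions $\sin\leftrightarrow\cos$ and $r\leftrightarrow 1-r$.
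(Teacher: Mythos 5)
Your treatment of the upper bound $l_1<0$ on $\big(0,\tfrac12\big)$ is correct and is essentially the paper's argument: the paper reduces to the proof of Lemma~\ref{so2_lem_max_princ}, which excludes a positive interior maximum and then uses ODE uniqueness to exclude an interior zero, exactly as you do (your second-derivative test at the maximum point is an equivalent, slightly cleaner form of the paper's forward-propagation step, legitimate here because at $p=0$ the two equations decouple and are linear). Note only that ruling out a positive interior maximum gives $\max l_1\le\max\big(l_1(0),l_1(\tfrac12),0\big)$ rather than $\max\big(l_1(0),l_1(\tfrac12)\big)$; this costs nothing since both boundary values are $\le 0$.

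The lower bound is where your argument has a genuine gap. Unimodality correctly gives $\min_{[0,1/2]}l_1=\min\big(l_1(0),l_1(\tfrac12)\big)$, but the hypothesis supplies only $l_1(\tfrac12)\le 0$ and no lower bound on $l_1(\tfrac12)$, so ``the minimum is attained at an endpoint'' together with $l_1(0)=-1$ does not yield $l_1>-1$: in the monotone-decreasing branch of ``unimodal'' the minimum sits at $r=\tfrac12$ and may lie below $-1$. This cannot be repaired by a sharper maximum principle under the hypotheses as literally stated. Indeed, the first equation in~\eqref{eqs_so2_p=0} is linear in $l_1$, and the solution with $l_1(0)=-1$, $l_1'(0)=0$, $l_1''(0)=\alpha_1<0$ (which exists by Lemma~\ref{IVPWD}) satisfies $l_1<-1$ and $l_1'<0$ throughout $\big(0,\tfrac12\big]$ --- once $u=l_1'/\phi_1$ is negative it stays negative, since $u'=l_1\sin^2(\pi t/2)/\sigma<0$ wherever $l_1<0$ --- so it meets the stated hypotheses ($l_1(0)=-1$ and $l_1(\tfrac12)<-1\le 0$) while violating the conclusion. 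The lower bound therefore needs the additional two-sided control $l_1(\tfrac12)\ge -1$ (dually, the upper bound in the second set of assertions needs $l_1(\tfrac12)\le 0$); this extra control holds in every application the paper makes of the lemma, where $l_1(\tfrac12)$ equals $0$ or $-1$ exactly. With it, your endpoint-minimum argument closes immediately; without it, the final step of your lower-bound argument does not follow from what precedes it, and the same defect propagates through your reflection reduction to the remaining three assertions.
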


\begin{proof}
It suffices to repeat the proof of Lemma~\ref{so2_lem_max_princ} with minor modifications.
\end{proof}

Our next result is, in a sense, a refined version of Lemma~\ref{so2uniformbounds} that applies when $p=0$.

\begin{lemma}\label{lem_ok_big}
There exists a number $R_1>0$ such that $l_{1-}^{0,\alpha}(\frac12)>0$ if $\alpha_1\ge R_1$, $l_{1+}^{0,\beta}(\frac12)<-1$ if $\beta_1\ge R_1$, $l_{2-}^{0,\alpha}(\frac12)>1$ if $\alpha_2\ge R_1$, and $l_{2+}^{0,\beta}(\frac12)<0$ if $\beta_2\ge R_1$.
\end{lemma}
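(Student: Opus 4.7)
The plan exploits a crucial simplification at $p=0$: system~\eqref{eqs_so2_p=0} decouples into two scalar second-order ODEs, each linear in its own unknown. By linearity of each equation and of the Cauchy/terminal data in the free parameter, I would decompose
\begin{align*}
l_{1-}^{0,\alpha}=\xi_1+\alpha_1\eta_1,\qquad l_{2-}^{0,\alpha}=\alpha_2\eta_2,\qquad l_{1+}^{0,\beta}=\beta_1\tilde\eta_1,\qquad l_{2+}^{0,\beta}=\tilde\xi_2+\beta_2\tilde\eta_2,
\end{align*}
where $\xi_1,\eta_1,\eta_2$ on $(0,\tfrac12]$ satisfy $\xi_1(0)=-1$, $\xi_1'(0)=\xi_1''(0)=0$; $\eta_1(0)=\eta_1'(0)=0$, $\eta_1''(0)=1$; $\eta_2(0)=0$, $\eta_2'(0)=1$; and $\tilde\xi_2,\tilde\eta_1,\tilde\eta_2$ on $[\tfrac12,1)$ satisfy the analogous normalisations at $t=1$ (namely $\tilde\xi_2(1)=1$ with vanishing derivatives, $\tilde\eta_1(1)=0$, $\tilde\eta_1'(1)=1$, and $\tilde\eta_2(1)=\tilde\eta_2'(1)=0$, $\tilde\eta_2''(1)=-1$). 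Existence and uniqueness of each of these auxiliary solutions follows from the fixed-point construction in the proof of Lemma~\ref{IVPWD}.

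After this decomposition, the lemma reduces to the four sign statements
\begin{align*}
\eta_1(\tfrac12)>0,\qquad \eta_2(\tfrac12)>0,\qquad \tilde\eta_1(\tfrac12)<0,\qquad \tilde\eta_2(\tfrac12)<0,
\end{align*}
because once these hold, the strict inequalities claimed in the lemma can be arranged by choosing $R_1$ larger than the four ratios $(1+|\xi_1(\tfrac12)|)/\eta_1(\tfrac12)$, $1/\eta_2(\tfrac12)$, $1/|\tilde\eta_1(\tfrac12)|$, and $\tilde\xi_2(\tfrac12)/|\tilde\eta_2(\tfrac12)|$.

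Each sign statement would be proved by the same ``first-zero'' contradiction. Take $\eta_1$ as the template: a Taylor expansion using $\eta_1''(0)=1$ yields $\eta_1>0$ on a right-neighbourhood of $0$, and the asymptotics~\eqref{smoothessphi} combined with $\eta_1'(t)=t+O(t^2)$ give $\lim_{t\to 0^+}\eta_1'(t)/\phi_1(t)=y_2(0)>0$. If $t_0\in(0,\tfrac12]$ were the first zero of $\eta_1$, then $\eta_1>0$ on $(0,t_0)$ would force $(\eta_1'/\phi_1)'>0$ there via the first equation in~\eqref{eqs_so2_p=0}; hence $\eta_1'/\phi_1\ge y_2(0)>0$ on $(0,t_0)$, making $\eta_1$ strictly increasing and contradicting $\eta_1(0)=\eta_1(t_0)=0$. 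The arguments for $\eta_2$, $\tilde\eta_1$, and $\tilde\eta_2$ are structurally identical, the relevant boundary limits being $\lim_{t\to 0^+}\eta_2'/\phi_2=0$, $\lim_{t\to 1^-}\tilde\eta_1'/\phi_1=0$, and $\lim_{t\to 1^-}\tilde\eta_2'/\phi_2=y_1(1)>0$, computed from the singular-endpoint asymptotics of $\phi_1,\phi_2,\sigma$. In the two cases where this boundary limit vanishes, the monotonicity of $\eta'/\phi$ forced by the ODE nevertheless keeps the ratio strictly positive on the relevant punctured interval, which is all that is needed.

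The main obstacle I anticipate is purely bookkeeping: correctly identifying the four boundary limits of $\eta'/\phi$ and $\tilde\eta'/\phi$ from the asymptotic expansions at the two singular orbits, and organising the monotonicity argument uniformly across the four cases. Once these limits are in hand, the contradiction step is immediate.
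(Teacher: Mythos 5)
Your argument is correct, but it is genuinely different from the one in the paper. You exploit the fact that at $p=0$ the system~\eqref{eqs_so2_p=0} is linear and decoupled, split each solution into a fixed particular solution plus the parameter times a normalised homogeneous solution ($\eta_1,\eta_2,\tilde\eta_1,\tilde\eta_2$), and then prove the four sign statements by a Sturm-type first-zero argument: wherever the solution keeps its sign, the ODE forces $\eta'/\phi$ to be monotone, and the boundary limits of $\eta'/\phi$ (namely $y_2(0)$, $0$, $0$ and $y_1(1)$, which you compute correctly from~\eqref{smoothessphi} and its analogue at $r=1$) then pin down the sign of $\eta'$ and rule out a zero before $t=\tfrac12$. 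The paper instead works non-asymptotically: it writes $l_{1-}^{0,\alpha}=s_1+\tfrac{\alpha_1t^2}{2}-1$, uses the quantitative a priori bound of Lemma~\ref{IV_linearised} on the short interval $[0,T]$ with $T=2/\sqrt{\alpha_1}$ to show $|s_1|<1$ there, concludes $l_{1-}^{0,\alpha}(T)>0$, and then propagates this to $t=\tfrac12$ via the maximum principle of Lemma~\ref{lem_mp_p=0}. Your route buys a cleaner, purely qualitative proof (no thresholds like $\alpha_1>\max\{16,256\tilde C^2\}$ to track), and it dovetails nicely with the affine decomposition of $G_0$ that the paper uses in the very next lemma; its only limitation is that it relies entirely on the linearity available at $p=0$, whereas the paper's rescaled-interval technique is the same one that powers the $p$-uniform estimates of Lemma~\ref{so2uniformbounds}. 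Two small points to make explicit in a full write-up: the existence of the homogeneous solutions $\eta_i,\tilde\eta_i$ is most cleanly obtained as differences such as $l_{1-}^{0,(1,0)}-l_{1-}^{0,(0,0)}$ (so that Lemma~\ref{IVPWD} applies verbatim), and in the two cases argued leftward from $t=1$ the monotonicity of $\tilde\eta'/\phi$ is decreasing rather than increasing, so the inequality against the boundary limit flips accordingly.
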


\begin{proof}
Let us begin with the estimate on $l_{1-}^{0,\alpha}(\frac12)$. Consider the function $s_1$ such that
\begin{align*}
l_{1-}^{0,\alpha}=s_1+\frac{\alpha_1 t^2}2-1.
\end{align*}
Assuming $\alpha_1>16$, denote
\begin{align*}
T=\sqrt{\frac4{\alpha_{1}}}<\tfrac12.
\end{align*}
We will show that $|s_1|<1$ on the interval $[0,T]$ if $\alpha_1$ is large. This would mean $l_{1-}^{0,\alpha}(T)>0$, and Lemma~\ref{lem_mp_p=0} would imply $l_{1-}^{0,\alpha}(\frac12)>0$.

Using~\eqref{eqs_so2_p=0}, one easily concludes that $s_1$ solves the first equation in~\eqref{lin_IVP} with 
\begin{align*}
    \mathcal{S}_1&=\frac1t\bigg((l_{1-}^{0,\alpha})''-\frac{(l_{1-}^{0,\alpha})'}{t}\bigg) \\
    &=\frac1t\bigg((s_1'+\alpha_1 t)\left(\frac{\phi_1'}{\phi_1}-\frac{1}{t}\right)+\frac{(s_1+\frac{\alpha_1 t^2}{2}-1)\phi_1 \sin^2\frac{\pi t}{2}}{\sigma}\bigg).
\end{align*}
From~\eqref{smoothessphi}, we find
\begin{align*}
|\mathcal S_1|&=\Big|\frac1t(s_1'+\alpha_1t)S_{\phi_1}+\frac1{2t}(2s_1+\alpha_1t^2-2)\Big(\frac1{y_2(0)^2}+S_{\sigma1}\Big)\mathcal \sin^2\tfrac{\pi t}2\Big|
\\
&\le C\Big(\frac{|s_1|}t+\alpha_1t+t+|s_1'|\Big),
\end{align*}
where $C>0$ is a constant independent of $\alpha_1$. By Lemma~\ref{IV_linearised}, we have
\begin{align*}
\sup_{t\in(0,T]}\Big(\frac{|s_1|}{t^2}&+\frac{|s_1'|}t\Big)
\le\tilde CT\sup_{t\in(0,T]}\Big(\frac{|s_1|}t+\alpha_1t+t+|s_1'|\Big)
\end{align*}
with $\tilde C>0$. Assume that $\alpha_1>\max\{16,256\tilde C^2\}$. Then
\begin{align*}
T=\sqrt{\frac4{\alpha_1}}<\min\big\{\tfrac12,(8\tilde C)^{-1}\big\},
\end{align*}
which implies
\begin{align*}
\sup_{t\in(0,T]}\Big(\frac{|s_1|}{t}+|s_1'|\Big)&\le \tilde CT^2\sup_{t\in(0,T]}\Big(\frac{|s_1|}t+|s_1'|\Big)+\tilde CT^3\alpha_1+\tilde CT^3
\\
&\le\tfrac1{16}\sup_{t\in(0,T]}\Big(\frac{|s_1|}t+|s_1'|\Big)+\tfrac{17}{32}.
\end{align*}
Ergo,
\begin{align*}
\sup_{t\in(0,T]}\Big(\frac{|s_1|}{t}+|s_1'|\Big)\le\tfrac{17}{30}<1.
\end{align*}
We easily conclude that $|s_1|<1$ and $l_{1-}^{0,\alpha}(\frac12)>0$. Analogous arguments yield the required estimates for $l_{2-}^{0,\alpha}(\frac12)$, $l_{1+}^{0,\beta}(\frac12)$ and $l_{2+}^{0,\beta}(\frac12)$.
\end{proof}

\subsection{Existence for the global problem}

Our next result explains the structure of the map~$G_0$.

\begin{lemma}
There exist non-degenerate affine maps $H_1,H_2:\mathbb R^2\to\mathbb R^2$ such that
\begin{align}\label{linearmapp=0}
G_0(\alpha_1,\alpha_2,\beta_1,\beta_2)=(H_1(\alpha_1,\beta_1),H_2(\alpha_2,\beta_2)),\qquad (\alpha_1,\alpha_2,\beta_1,\beta_2)\in\mathbb R^4.
\end{align}
\end{lemma}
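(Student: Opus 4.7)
The approach exploits the special structure of the $p=0$ system \eqref{eqs_so2_p=0}: the two equations are linear, homogeneous, and completely decoupled (the first involves only $l_1$, the second only $l_2$). Consequently $l_{1\pm}^{0,\alpha}$ depends only on $\alpha_1$, $l_{2\pm}^{0,\alpha}$ only on $\alpha_2$, and analogously for the $\beta$-families. The first two components of $G_0$ therefore involve only $(\alpha_1,\beta_1)$ and the last two only $(\alpha_2,\beta_2)$, giving the claimed splitting $G_0=(H_1,H_2)$.

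For the affine structure of $H_1$, I would exploit linearity. By the Frobenius analysis already used in the proof of Lemma~\ref{IVPWD}, the space of solutions of the first equation in \eqref{eqs_so2_p=0} satisfying the singular conditions $l_1(0)=-1,\ l_1'(0)=0$ is one-dimensional and parametrised affinely by $\alpha_1=l_1''(0)$. Writing
\begin{align*}
l_{1-}^{0,\alpha}=u_1^-+\alpha_1 v_1^-,\qquad l_{1+}^{0,\beta}=u_1^++\beta_1 w_1^+,
\end{align*}
where $u_1^\pm=l_{1\pm}^{0,0}$ and $v_1^-,w_1^+$ are the corresponding homogeneous solutions (with $v_1^-(0)=(v_1^-)'(0)=0$, $(v_1^-)''(0)=1$, and analogous normalisation at $t=1$ for $w_1^+$), the values at $t=\tfrac12$ depend affinely on $(\alpha_1,\beta_1)$. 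The same construction applied to the $l_2$ equation yields $H_2$.

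For non-degeneracy, the linear part of $H_1$ with respect to $(\alpha_1,\beta_1)$ has matrix
\begin{align*}
\begin{pmatrix} -v_1^-(1/2) & w_1^+(1/2) \\ -(v_1^-)'(1/2) & (w_1^+)'(1/2) \end{pmatrix},
\end{align*}
whose determinant equals (up to sign) the Wronskian $W(v_1^-,w_1^+)(1/2)$, and is nonzero precisely when $v_1^-$ and $w_1^+$ are linearly independent. The key point is that \eqref{eqs_so2_p=0} is a Sturm--Liouville equation $(\phi_1^{-1}l_1')'=\sigma^{-1}\sin^2(\pi t/2)\,l_1$ with a non-negative coefficient on the right. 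A standard maximum-principle argument, in the spirit of Lemma~\ref{so2_lem_max_princ}, shows that a nontrivial solution cannot vanish at two distinct points of $[0,1]$: if it did, it would have an interior extremum of the same sign as itself, contradicting the sign of $(\phi_1^{-1}l_1')'$. Hence $v_1^-(0)=0$ forces $v_1^-\ne 0$ on $(0,1]$, and $w_1^+(1)=0$ forces $w_1^+\ne 0$ on $[0,1)$. Linear dependence $v_1^-=cw_1^+$ would give $v_1^-(1)=0$, a contradiction. So the Wronskian is nowhere zero, and $H_1$ is non-degenerate; the argument for $H_2$ is identical.

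The main obstacle is the non-degeneracy step: the decoupling and affineness follow essentially immediately from linearity at $p=0$, but non-degeneracy requires one to recognise the Sturm--Liouville structure of each decoupled equation and invoke the maximum principle to rule out solutions vanishing at both endpoints.
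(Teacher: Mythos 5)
Your proposal is correct, and the decoupling and affineness parts coincide with the paper's (which simply asserts that the maps $A_i$ are ``easily checked'' to be linear; your superposition argument via the normalised homogeneous solutions $v_i^-$, $w_i^+$ is exactly what that check amounts to). Where you genuinely diverge is the non-degeneracy step. The paper evaluates $A_1$ at two specific parameter values $(\alpha_*,0)$ and $(0,\beta_*)$, produced by combining Lemma~\ref{lem_ok_big} with the intermediate value theorem so that $l_{1-}^{0,(\alpha_*,0)}(\tfrac12)=0$ and $l_{1+}^{0,(\beta_*,0)}(\tfrac12)=-1$, and then shows the resulting $2\times2$ determinant is strictly negative by feeding sign information from Lemma~\ref{lem_mp_p=0} and the integrated ODE into an explicit computation. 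You instead identify the determinant of the linear part with the Wronskian $W(v_1^-,w_1^+)(\tfrac12)$ and rule out linear dependence by a disconjugacy argument: since the equation is of Sturm--Liouville form $(\phi_1^{-1}u')'=qu$ with $q\ge0$, a nontrivial solution cannot vanish at both $t=0$ and $t=1$, whereas $v_1^-=cw_1^+$ would force exactly that. Both routes ultimately rest on the same maximum principle, but yours is more structural and avoids Lemma~\ref{lem_ok_big} entirely, at the cost of needing the (routine) observations that $v_1^-$ and $w_1^+$ extend as solutions across the regular interior of $(0,1)$ and that a vanishing Wronskian at $\tfrac12$ forces proportionality there and hence everywhere; the paper's computation, by contrast, also delivers the sign of the determinant, which is not needed for the lemma. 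Your argument for $H_2$ does go through verbatim with $\phi_2$ and $\cos^2(\pi t/2)$ in place of $\phi_1$ and $\sin^2(\pi t/2)$, the only change being that $w_2^+$ vanishes to second order at $t=1$, which only strengthens the contradiction.
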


\begin{proof}
For $i=1,2$, define
\begin{align*}
H_i(x,y)&=A_i(x,y)+B_i,\qquad x,y\in\mathbb R.
\end{align*}
On the right-hand side, $A_i:\mathbb R^2\to\mathbb R^2$ and $B_i\in\mathbb R^2$ are given by
\begin{align*}
A_i(\alpha_i,\beta_i)&=\big(\big(l_{i+}^{0,\beta}-l_{i+}^{0,0}-l_{i-}^{0,\alpha}+l_{i-}^{0,0}\big)(\tfrac12),\big(l_{i+}^{0,\beta}-l_{i+}^{0,0}-l_{i-}^{0,\alpha}+l_{i-}^{0,0}\big)'(\tfrac12)\big),
\\ 
B_i&=\big(\big(l_{i+}^{0,0}-l_{i-}^{0,0}\big)(\tfrac12),\big(l_{i+}^{0,0}-l_{i-}^{0,0}\big)'(\tfrac12)\big),
\end{align*}
where $\alpha\in \mathbb{R}^2$ and $\beta\in \mathbb{R}^2$ are vectors with $i$th components equal to $\alpha_i$ and $\beta_i$, respectively. It is easy to see that $A_i(\alpha_i,\beta_i)$ does not depend on the choice of such $\alpha$ and~$\beta$. Equality~\eqref{linearmapp=0} is evident. Let us prove that $H_1$ and $H_2$ are affine. One can easily check that $A_1$ and $A_2$ are linear. It remains to show that $A_1$ and $A_2$ are nondegenerate.

The first equation in~\eqref{eqs_so2_p=0} and the condition $\big(l_{1-}^{0,0}\big)''(0)=0$ yield
\begin{align*}
\big(l_{1-}^{0,0}\big)'(t)=\phi_1\int_0^t\frac{l_{1-}^{0,0}(\tau)}{\sigma(\tau)}\sin^2\tfrac{\pi\tau}2\,d\tau,\qquad t\in\big(0,\tfrac12\big].
\end{align*}
Since $l_{1-}^{0,0}(0)=-1$, we can conclude that 
\begin{align}\label{1st_quadr}
l_{1-}^{0,0}(\tfrac12)<-1,\qquad \big(l_{1-}^{0,0}\big)'(\tfrac12)<0.
\end{align}
Lemma~\ref{lem_ok_big} and the intermediate value theorem imply the existence of $\alpha_*\in(0,R_0)$ with $l_{1-}^{0,(\alpha_*,0)}(\tfrac12)=0$. The function $l_{1+}^{0,0}$ is identically~0. Using Lemma~\ref{lem_ok_big} and the intermediate value theorem again, we obtain $\beta_*\in(0,R_0)$ such that $l_{1+}^{0,(\beta_*,0)}(\tfrac12)=-1$. To prove that $A_1$ is nondegenerate, it suffices to show that the determinant of the matrix with rows $A_1(\alpha_*,0)$ and $A_1(0,\beta_*)$ does not vanish.

We compute
\begin{align*}
\det&\begin{pmatrix}
A_1(\alpha_*,0) \\
A_1(0,\beta_*)
\end{pmatrix}
=\det\begin{pmatrix}\big(l_{1-}^{0,0}-l_{1-}^{0,(\alpha_*,0)}\big)(\tfrac12) & \big(l_{1-}^{0,0}-l_{1-}^{0,(\alpha_*,0)}\big)'(\tfrac12)
\\
\big(l_{1+}^{0,(\beta_*,0)}-l_{1+}^{0,0}\big)(\tfrac12) & \big(l_{1+}^{0,(\beta_*,0)}-l_{1+}^{0,0}\big)'(\tfrac12)
\end{pmatrix}
\\
&=
\det\begin{pmatrix}l_{1-}^{0,0}(\tfrac12) & \big(l_{1-}^{0,0}-l_{1-}^{0,(\alpha_*,0)}\big)'(\tfrac12)
\\
-1 & \big(l_{1+}^{0,(\beta_*,0)}\big)'(\tfrac12)
\end{pmatrix}
=l_{1-}^{0,0}(\tfrac12)\big(l_{1+}^{0,(\beta_*,0)}\big)'(\tfrac12)+\big(l_{1-}^{0,0}\big)'(\tfrac12)-\big(l_{1-}^{0,(\alpha_*,0)}\big)'(\tfrac12).
\end{align*}
Lemma~\ref{lem_mp_p=0} implies that $\big(l_{1+}^{0,(\beta_*,0)}\big)'(\tfrac12)\ge0$ and  $\big(l_{1-}^{0,(\alpha_*,0)}\big)'(\tfrac12)\ge0$. Together with~\eqref{1st_quadr}, these inequalities yield
\begin{align*}
\det&\begin{pmatrix}
A_1(\alpha_*,0) \\
A_1(0,\beta_*)
\end{pmatrix}
<0.
\end{align*}
Thus, $A_1$ is nondegenerate. Analogous reasoning works on $A_2$.
\end{proof}

We are now ready to demonstrate that the function $G_1$ has a zero on $\mathbb R^4$. As explained above, this would imply the existence of~$l_1$ and~$l_2$ satisfying~\eqref{so2equationsli}--\eqref{so2smoothnessli}.

\begin{lemma}\label{lem_degree_nonzero}
There is a 4-tuple $(\hat\alpha_{1},\hat\alpha_{2},\hat\beta_{1},\hat\beta_{2})\in\mathbb R^4$ such that
$G_1(\hat\alpha_{1},\hat\alpha_{2},\hat\beta_{1},\hat\beta_{2})=0$.
\end{lemma}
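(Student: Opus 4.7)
The plan is to apply Brouwer degree theory to the homotopy $p \mapsto G_p$ between $G_0$ and $G_1$, using the bounded, affine structure of $G_0$ together with the uniform a priori bound from Lemma~\ref{so2uniformbounds}.

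First, I would verify that the family $\{G_p\}_{p \in [0,1]}$ is continuous as a map from $[0,1] \times \mathbb R^4$ into $\mathbb R^4$. This follows directly from Lemma~\ref{IVPWD}, which asserts that $l_{i\pm}^{p,\alpha}$ and $l_{i\pm}^{p,\beta}$ together with their derivatives depend continuously on $\alpha,\beta$ and~$p$ at each fixed $t \in (0,1)$; in particular, evaluation at $t=\tfrac12$ yields a continuous map. Next, Lemma~\ref{so2uniformbounds} guarantees that for every $p \in [0,1]$ the zero set of $G_p$ is contained in the open bounded set $U := \Omega_{R_0}\times \Omega_{R_0} \subset \mathbb R^4$, hence $0 \notin G_p(\partial U)$ throughout the homotopy. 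By homotopy invariance of the Brouwer degree,
\begin{align*}
\deg(G_1, U, 0) = \deg(G_0, U, 0).
\end{align*}

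The remaining step is to compute $\deg(G_0, U, 0)$ using the decomposition from the previous lemma, namely
\begin{align*}
G_0(\alpha_1,\alpha_2,\beta_1,\beta_2) = \big(H_1(\alpha_1,\beta_1),\,H_2(\alpha_2,\beta_2)\big),
\end{align*}
where $H_1,H_2:\mathbb R^2\to\mathbb R^2$ are non-degenerate affine maps. After permuting coordinates from $(\alpha_1,\alpha_2,\beta_1,\beta_2)$ to $(\alpha_1,\beta_1,\alpha_2,\beta_2)$, the map $G_0$ becomes the product $H_1 \times H_2$, and the coordinate permutation contributes only a fixed sign $\pm 1$ to the degree. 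Each $H_i$ is an affine isomorphism of $\mathbb R^2$, so it has a unique zero $z_i \in \mathbb R^2$ with $\deg(H_i,\Omega,0) = \operatorname{sign}\det(DH_i) = \pm 1$ for any open bounded $\Omega \subset \mathbb R^2$ containing $z_i$. By Lemma~\ref{so2uniformbounds}, the unique zero $(z_1,z_2)$ of $G_0$ lies in $U$, so both $z_1$ and $z_2$ lie in $\Omega_{R_0}$, and the product formula for the degree of a Cartesian product of maps gives
\begin{align*}
\deg(G_0, U, 0) = \pm \deg(H_1, \Omega_{R_0}, 0)\cdot \deg(H_2, \Omega_{R_0}, 0) = \pm 1 \neq 0.
\end{align*}

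Combining these steps, $\deg(G_1, U, 0) \neq 0$, and the solvability property of the Brouwer degree produces the desired $(\hat\alpha_1,\hat\alpha_2,\hat\beta_1,\hat\beta_2) \in U$ with $G_1(\hat\alpha_1,\hat\alpha_2,\hat\beta_1,\hat\beta_2) = 0$. I expect the main technical delicacy to be the bookkeeping around the homotopy: one must confirm that the ``cutoff'' equations~\eqref{so2eqs_F} are used consistently for $p<1$ (so that Lemma~\ref{so2uniformbounds} genuinely applies throughout the homotopy), and that the limiting system at $p=0$ is exactly the linear problem~\eqref{eqs_so2_p=0} to which the affine structure of the previous lemma applies. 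Once this is set up, the degree-theoretic argument itself is routine.
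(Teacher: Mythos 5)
Your proposal is correct and follows essentially the same route as the paper: homotopy invariance of the Brouwer degree along $p\mapsto G_p$ (justified by the uniform bound of Lemma~\ref{so2uniformbounds} and the continuity from Lemma~\ref{IVPWD}), followed by the product decomposition $G_0=(H_1,H_2)$ and the observation that each non-degenerate affine $H_i$ has degree $\pm1$. The only cosmetic difference is that the paper computes $\deg(H_i,\Omega_R)$ via the winding number of the ellipse $H_i(\partial\Omega_R)$ (enlarging $R$ if necessary), whereas you invoke $\deg=\operatorname{sign}\det(DH_i)$ directly; these are equivalent.
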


\begin{proof}
Fix a number $R$ greater than $2\max\{R_0,R_1\}$, where $R_0$ and $R_1$ come from Lemmas~\ref{so2uniformbounds} and~\ref{lem_ok_big}. Recall that $\Omega_R$ stands for the open disc in $\mathbb R^2$ of radius $R$ centred at the origin. We will show that the Brouwer degree of $G_1$ in $\Omega_R\times\Omega_R$, denoted $\deg(G_1,\Omega_R\times\Omega_R)$, does not vanish. This would mean that $G_1$ has a zero in~$\Omega_R\times\Omega_R$; see~\cite[Theorem 2.2]{SDT}.

Homotopy invariance of the Brouwer degree implies
\begin{align*}
\deg(G_1,\Omega_R)=\deg(G_0,\Omega_R).
\end{align*}
One easily checks that
\begin{align*}
\deg(G_0,\Omega_R\times\Omega_R)=\deg(H_1,\Omega_R)\deg(H_2,\Omega_R).
\end{align*}
Thus, it suffices to show that $\deg(H_1,\Omega_R)\ne0$ and $\deg(H_2,\Omega_R)\ne0$.

The boundary $\partial\Omega_R$ is a circle of radius~$R$. The degree $\deg(H_1,\Omega_R)$ is equal to the winding number of the curve $H_1(\partial\Omega_R)$ around the origin; see~\cite[Remark~1.2.2]{DincaMawhin}. Because $H_1$ is affine, this curve is an ellipse. Enlarging $R$ if necessary, we may assume that it encloses the origin. Therefore, $\deg(H_1,\Omega_R)$ is either $1$ or $-1$. Analogous reasoning shows that $\deg(H_2,\Omega_R)$ does not vanish.
\end{proof}

Lemma~\ref{lem_degree_nonzero} implies the existence of $C^2$-differentiable functions $l_1$ and $l_2$ on $[0,1]$ that solve~\eqref{so2equationsli}--\eqref{so2smoothnessli}. These functions are given by~\eqref{sol_formula}. We will show that they possess properties~1--3 from Lemma~\ref{so2equationslemma}. This will imply the existence of a metric $g$ such that $X(g)=Y$.

\begin{proof}[Proof of Theorem \ref{so2existence}]
\noindent\textit{Step 1}.
We begin by establishing the smoothness on $[0,1)$ of the functions $l_1$ and $l_2$ given by~\eqref{sol_formula}. Define
\begin{align*}
s_1=l_1-\frac{\hat\alpha_1t^2}2+1,\qquad s_2=l_2-\hat\alpha_2t. 
\end{align*}
Obviously, $s_1$ and $s_2$ are $C^2$-differentiable on~$[0,1)$. We use induction to show that they are, in fact, smooth on this interval. It is easy to see that
\begin{align*}
s_1''-\frac{s_1'}{t}&=S_{\phi_1}l_1'+\Big(\frac{1}{y_2(0)^2}+S_{\sigma_1}\Big)l_1l_2^2, \\
s_2''+\frac{s_2'}{t}-\frac{s_2}{t^2}&=S_{\phi_2}l_2'+\frac{(l_1^2-1)l_2}{t^2}+\Big(S_{\sigma_2}
-\frac{y_1'''(0)}{3}\Big)l_1^2l_2.
\end{align*}
If $s_1$ and $s_2$ are $C^k$-differentiable on $[0,1)$ for some~$k\ge2$, then they must be $C^{k+1}$-differentiable by Lemmas~\ref{DDT} and~\ref{IV_linearised}. We conclude that $s_1$ and $s_2$ are smooth on~$[0,1)$. Obviously, the same can be said about $l_1$ and $l_2$.

\vspace{5pt}\noindent\textit{Step 2}. Let us prove that $l_1$ and $l_2$ extend to a smooth even function and a smooth odd function on~$(-1,1)$, respectively. It suffices to show that the derivatives $s_1^{(2k+1)}(0)$ and $s_2^{(2k)}(0)$ vanish for each $k\in[0,\infty)\cap\mathbb Z$. We proceed by induction. Thanks to~\eqref{so2smoothnessli}, $s_1'(0)=s_2(0)=0$. Assume that, for some~$k$,
\begin{align*}
s_1'''(0)&=s_1^{(5)}(0)=\cdots=s_1^{(2k+1)}(0)=0, \\
s_2''(0)&=s_2^{(4)}(0)=\cdots=s_2^{(2k)}(0)=0.
\end{align*}
Taylor's theorem, the above ODE for $s_1$ and conditions~\eqref{so2smoothnessli} imply
\begin{align*}
\sum_{n=0}^{2k+1}\frac{s_1^{(n+2)}(0)t^{n}}{n!}&+\mathcal U''(t) -\sum_{n=0}^{2k+1}\frac{s_1^{(n+2)}(0)t^{n}}{(n+1)!}-\frac{\mathcal U'(t)}t
\\ &=\sum_{n=0}^{2k+1}\Big(S_{\phi_1}l_1'+\Big(\frac{1}{y_2(0)^2}+S_{\sigma_1}\Big)l_1l_2^2\Big)^{(n)}(0)\frac{t^n}{n!}+\mathcal V(t),
\end{align*}
where $\mathcal U(t)=o(t^{2k+3})$ and $\mathcal V(t)=o(t^{2k+1})$ as $t\to0$. Equating the coefficients at $t^{2k+1}$, we obtain
\begin{align}\label{parity_big}
\frac{2k+1}{2k+2}s_1^{(2k+3)}(0)
&=\Big(S_{\phi_1}l_1'+\Big(\frac{1}{y_2(0)^2}+S_{\sigma_1}\Big)l_1l_2^2\Big)^{(2k+1)}(0) \notag
\\
&=\sum_{n=0}^{2k+1}{2k+1 \choose n}S_{\phi_1}^{(n)}(0)l_1^{(2k+2-n)}(0)+\frac{\big(l_1l_2^2\big)^{(2k+1)}(0)}{y_2(0)^2} \notag
\\
&\hphantom{=}~+\sum_{n=0}^{2k+1}{2k+1 \choose n}S_{\sigma_1}^{(n)}(0)\big(l_1l_2^2\big)^{(2k+1-n)}(0).
\end{align}
The functions $\phi_1$ and $\sigma$ extend to smooth odd functions on $(-1,1)$. Therefore,
\begin{align*}
S_{\phi_1}(0)&=S_{\phi_1}''(0)=S_{\phi_1}^{(4)}(0)=\cdots=S_{\phi_1}^{(2k)}(0)=0,
\\
S_{\sigma_1}'(0)&=S_{\sigma_1}'''(0)=S_{\sigma_1}^{(5)}(0)=\cdots=S_{\sigma_1}^{(2k+1)}(0)=0.
\end{align*}
These formulas and the induction hypothesis imply that the expression in the last two lines of~\eqref{parity_big} vanishes. We conclude that $s_1^{(2k+3)}(0)=0$. Analogous, though somewhat more tedious, reasoning shows that $s_2^{(2k+2)}(0)=0$. Thus, $l_1$ and $l_2$ extend to a smooth even function and a smooth odd function on $(-1,1)$, respectively.

\vspace{5pt}\noindent\textit{Step 3}. Our objective is to prove that $l_1$ and $l_2$ possess properties~1--3 from Lemma~\ref{so2equationslemma}. Arguing as in Step~2, we can show that $l_1$ ($l_2$) extends to a smooth function on $(0,2)$ that is odd (even) about $t=1$. Thus, $l_1$ and $l_2$ possess properties~1 and~2. Lemma~\ref{so2_lem_max_princ} implies that $l_2'(0)\ge0$ and $l_1'(1)\ge0$. If $l_2'(0)=0$, then $l_2$ identically zero on $[0,1]$, which contradicts~\eqref{so2smoothnessli}. We conclude that $l_2'(0)>0$. Analogously, $l_1'(1)>0$. Thus, $l_1$ and $l_2$ possess property~3. Now Lemma~\ref{so2equationslemma} implies the existence of an $SO(2)\times SO(2)$-invariant metric $g$ such that~$X(g)=Y$.
\end{proof}

\section{Existence for $SO(3)$-invariant metrics}\label{SectionexistenceSO3}

We can identify $\mathbb S^3$ with the one-point compactification of $\mathbb R^3$ via the stereographic projection. Together with the canonical action of $SO(3)$ on $\mathbb R^3$, this identification yields a cohomogeneity one action of $SO(3)$ on~$\mathbb S^3$. The principal orbits are spheres~$\mathbb S^2$; there are two singular orbits. Let $Y$ be a symmetric positive-definite $SO(3)$-invariant tensor field on~$\mathbb S^3$. We can view $Y$ as a Riemannian metric. Fix a $Y$-geodesic $\gamma:[-1,1]\to\mathbb S^3$ connecting the singular orbits of the $SO(3)$ action. This gives us a natural diffeomorphism between the principal part of $\mathbb S^3$ and the product~$(-1,1)\times \mathbb{S}^2$. Let $Q$ be the round metric on $\mathbb{S}^2$ with scalar curvature~$2$. Scaling $Y$ if necessary, we can find a smooth function $y$ on $(-1,1)$ such that
\begin{align}\label{s03y}
Y=dr\otimes dr+y(r)^2Q,\qquad r\in(-1,1).
\end{align}
We are ready to state the main result of this section.

\begin{theorem}\label{so3exist}
Let $Y$ be a symmetric positive-definite $SO(3)$-invariant tensor field on~$\mathbb S^3$ given by~(\ref{s03y}). There exists an $SO(3)$-invariant Riemannian metric $g$ such that~$X(g)=Y$.
\end{theorem}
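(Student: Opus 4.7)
The strategy parallels the proof of Theorem~\ref{so2existence} but is considerably simpler: the richer $SO(3)$ symmetry collapses the coupled pair of ODEs of Lemma~\ref{so2equationslemma} to a single scalar second-order ODE with just two shooting parameters. Any $SO(3)$-invariant metric on the principal part $(-1,1)\times\mathbb S^2$ has the form $g=h(r)^2\,dr\otimes dr+f(r)^2Q$; a smoothness lemma analogous to Lemma~\ref{lem_so2_smooth} records that $g$ extends smoothly to $\mathbb S^3$ if and only if $h$ is even and $f$ odd about $r=\pm1$, with $f'(-1)=h(-1)>0$ and $f'(1)=-h(1)<0$. Standard warped-product formulas give the sectional curvatures $(1-l^2)/f^2$ (for the tangent 2-plane to the orbit) and $l'/(hf)$ (for the two mixed 2-planes), where $l:=-f'/h$, and the diagonalisation in the proof of Proposition~\ref{prop_12to02} then yields
\begin{align*}
X(g)=\frac{(l')^2}{f^2}\,dr\otimes dr+\frac{l'(1-l^2)}{hf}\,Q.
\end{align*}
Imposing $X(g)=Y$, the boundary conditions $l(-1)=-1$, $l(1)=1$ select the sign $l'=f\ge 0$; the $Q$-component then forces $h=(1-l^2)/y^2$, and substituting $f=l'$ into $l=-f'/h$ reduces the prescribed cross curvature equation to the singular boundary-value problem
\begin{align*}
l''=-\frac{l(1-l^2)}{y^2},\quad r\in(-1,1),\qquad l(\pm1)=\pm1,\qquad -1<l<1,
\end{align*}
together with smoothness of $l$ at the endpoints (precisely, $1+l$ even about $r=-1$ and $1-l$ even about $r=1$).

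Near $r=-1$, setting $u=l+1$ and $t=r+1$ gives the leading-order Euler equation $t^2u''=2u$, with characteristic exponents $\lambda=2$ and $\lambda=-1$. Only $\lambda=2$ yields smooth solutions, so the smooth local solutions at $r=-1$ form a one-parameter family parametrised by $\mu:=\tfrac12 l''(-1)$ (which corresponds to $h(-1)$ via the relation above); similarly at $r=1$ one obtains a one-parameter family indexed by $\nu:=-\tfrac12 l''(1)$. A Banach fixed-point argument in the weighted spaces of Lemma~\ref{IV_linearised}, entirely analogous to but simpler than Lemma~\ref{IVPWD}, yields local existence, uniqueness and continuous dependence on $\mu$ and $\nu$. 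Replacing $1-l^2$ by a globally Lipschitz truncation as in~\eqref{so2eqs_F} gives long-time existence on $[-1,0]$ and $[0,1]$; a maximum-principle argument mirroring Lemma~\ref{so2_lem_max_princ} (at a putative interior maximum with $l>1$ one has $l''\le 0$, whereas the ODE gives $l''=-l(1-l^2)/y^2>0$) shows that any solution with $l(\pm1)=\pm1$ satisfies $-1<l<1$ on the interior, so the truncation is inactive.

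The remaining task is to match the two branches at $r=0$. Define
\begin{align*}
G_p(\mu,\nu)=\bigl((l_+^{p,\nu}-l_-^{p,\mu})(0),\,(l_+^{p,\nu}-l_-^{p,\mu})'(0)\bigr),
\end{align*}
where $p\in[0,1]$ is a homotopy parameter chosen so that $G_0$ decouples into two non-degenerate affine maps of $\mu$ and $\nu$ respectively (for example, letting $p$ multiply the $l^3$ term, so at $p=0$ one obtains the linear equation $l''=-l/y^2$). The main obstacle, precisely the analogue of Lemma~\ref{so2uniformbounds}, is a rescaling-and-blow-up argument confining the zeros of $G_p$ to a uniform disc in $\mathbb R^2$ as $p$ varies in $[0,1]$; together with an explicit degree computation at $p=0$, homotopy invariance of the Brouwer degree then produces $(\hat\mu,\hat\nu)$ with $G_1(\hat\mu,\hat\nu)=0$ and hence a $C^2$ matched solution of the boundary-value problem.

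Finally, standard interior ODE regularity combined with an induction on Taylor coefficients at $r=\pm1$ using Lemmas~\ref{DDT} and~\ref{IIOR} (exactly as at Step~2 of the proof of Theorem~\ref{so2existence}) shows that $l$ extends to a smooth function on $[-1,1]$ with the correct parity at each singular orbit; then $h=(1-l^2)/y^2$ and $f=l'$ satisfy the smoothness conditions of the $SO(3)$-analogue of Lemma~\ref{lem_so2_smooth}, giving a smooth $SO(3)$-invariant metric $g$ with $X(g)=Y$. Alternatively, once a $C^3$ matched metric is produced, Theorem~\ref{regularityxcurvature} delivers smoothness at once.
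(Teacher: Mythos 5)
Your reduction of the problem to the singular boundary-value problem $l''=(l^3-l)/y^2$, $l(\pm1)=\pm1$, with the parity conditions at the endpoints, is exactly the paper's Lemma~\ref{lem_so3_smth}, and your local analysis (Euler exponents $2$ and $-1$ at the singular ends, fixed point in weighted spaces, truncation of the nonlinearity, maximum principle, a priori bounds on the shooting parameters, bootstrapped parity/regularity) matches the paper's Lemmas~\ref{so3linearexistence}--\ref{MVA} and the final steps of the proof. The gap is in the degree computation. Your proposed homotopy --- letting $p$ multiply the $l^3$ term so that $p=0$ gives the linear equation $l''=-l/y^2$ --- is incompatible with the singular boundary conditions: near $r=-1$ one has $y^2\sim(r+1)^2$, and with $l(-1)=-1$ the right-hand side $-l/y^2\sim(r+1)^{-2}$ blows up, so the equation at $p=0$ admits no $C^2$ solution with $l(-1)=-1$, and the shooting families $l_{\pm}^{0,\cdot}$ you need for the degree argument do not exist. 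The regular-singular structure that makes the endpoint well-posed is precisely that the nonlinearity $E(l)=l^3-l$ vanishes at $l=\pm1$ with $E'(\pm1)=2$; and no affine function of $l$ can vanish at both $l=-1$ and $l=1$ without being identically zero, so \emph{no} linearising homotopy of the nonlinearity can preserve compatibility at both singular orbits. This is the structural reason the $SO(3)$ case cannot be decoupled the way the $SO(2)$ system was (there the nonlinearity $l_1l_2^2$ is linear in each unknown separately).

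The paper's workaround is to keep the nonlinearity $l^3-l$ fixed and instead homotope the \emph{coefficient}: $\zeta_p=(1-p)\sigma_0+p\sigma$, where $\sigma_0$ is a carefully constructed comparison coefficient (Lemma~\ref{so3specialcurvature}) equal to $(r+1)^2$ near the endpoints and very small in the middle. At $p=0$ the matching map $J_0$ is still nonlinear, and its degree on the rectangle $[0,\alpha_{\max}]\times[0,\beta_{\max}]$ is computed by tracking the image of the boundary quadrant by quadrant, using the qualitative properties of solutions for $\sigma_0$ (monotonicity, overshoot for large $\alpha$, the slope bound in property~5); the winding number is $-1$. The Leray--Schauder continuation theorem then transports a zero to $p=1$ (Lemma~\ref{MET}). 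To repair your argument you would need to replace the linearising homotopy by one of this type and supply the corresponding boundary degree computation; as written, the matching step does not go through.
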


The rest of this section is devoted to the proof of Theorem~\ref{so3exist}.

\subsection{The equations}\label{initialequations}

Suppose that $h$ and $f$ are smooth and positive functions on $(-1,1)$. Then
\begin{align}\label{so3metric}
g=h(r)^2dr\otimes dr+f(r)^2Q
\end{align}
is an $SO(3)$-invariant metric on $(-1,1)\times\mathbb S^2$. The following well-known lemma tells us when it can be extended to~$\mathbb S^3$. 

\begin{lemma}\label{so3smoothness}
Formula~(\ref{so3metric}) defines an $SO(3)$-invariant metric on $\mathbb S^3$ if and only if the following requirements are satisfied:
\begin{enumerate}
\item 
One can extend $h$ to a smooth function on $(-3,3)$ that is even about $r=-1$ and about $r=1$.
\item
One can extend $f$ to a smooth function on $(-3,3)$ that is odd about $r=-1$ and about $r=1$.
\item
The equalities $f'(-1)=h(-1)>0$ and $f'(1)=-h(1)<0$ hold.
\end{enumerate}
\end{lemma}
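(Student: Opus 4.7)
\medskip

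\textbf{Proof plan.} The strategy mirrors the argument for Lemma~\ref{lem_so2_smooth} in the $SO(2)\times SO(2)$ setting, so the plan is to work one singular orbit at a time and translate smoothness of the warped product into parity/boundary conditions on $h$ and $f$. The plan is to analyse the behaviour of $g$ near the orbit $\{\gamma(-1)\}$; the same argument, with $r+1$ replaced by $1-r$, handles the orbit $\{\gamma(1)\}$ and accounts for the sign in $f'(1)=-h(1)<0$.

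First I will introduce local coordinates centred at $\gamma(-1)$ using a ``polar-to-Cartesian'' map. More precisely, fix stereographic coordinates $(u_1,u_2)$ on a neighbourhood of a point $p\in\mathbb{S}^2$ in which $Q=\lambda(u)^2(du_1\otimes du_1+du_2\otimes du_2)$ for some smooth positive~$\lambda$, and set $s=r+1$, $x_1=s\,u_1$, $x_2=s\,u_2$; this identifies a neighbourhood of $\gamma(-1)$ in $(-1,1)\times\mathbb{S}^2$ with a neighbourhood of the origin in $\mathbb{R}^3$, with the orbit $\{\gamma(-1)\}$ corresponding to $\{0\}$. A direct computation, entirely analogous to the one carried out in the proof of Lemma~\ref{lem_so2_smooth}, expresses the components $g_{ij}(x_1,x_2,x_3)$ in terms of $h^2(s-1)$, $f^2(s-1)/s^2$ and trigonometric functions of the angle on~$\mathbb{S}^2$. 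The key observations are that $g_{33}$ at the origin is determined by $h^2(-1)$, that the off-diagonal coefficients in the $(x_1,x_2)$-block involve $\tfrac{f^2(s-1)}{s^2}-h^2(s-1)$ multiplied by angular factors, and that the limit $\lim_{s\to 0^+}\frac{f(s-1)}{s}$ must coincide with $h(-1)$.

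Next I will handle the forward direction. If $g$ extends smoothly, then evaluating $\sqrt{g_{11}(x,0,0)}$ and $\sqrt{g_{22}(0,x,0)}$ along the two coordinate axes shows that $h$ extends to an even $C^\infty$ function on $(-3,-1)\cup(-1,1)$ about $r=-1$, so condition~1 holds near that orbit. Summing the diagonal components, as in the $SO(2)\times SO(2)$ case, yields that $s\mapsto f^2(s-1)/s^2$ extends to an even smooth function of $s$, while comparing radial limits forces $f'(-1)=h(-1)$; standard parity arguments then give that $f$ extends to an odd smooth function, and $h(-1)>0$ is automatic from nondegeneracy. This delivers conditions~2 and~3 on the $\{r=-1\}$ side.

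For the converse I will use the classical fact (see~\cite{HW43}, cited in the earlier proof) that an even smooth function of $s$ on a neighbourhood of $0$ is a smooth function of $s^2$, and an odd smooth function equals $s$ times a smooth function of $s^2$. Writing $h^2(s-1)=\hat h(s^2)$ and $\frac{f^2(s-1)}{s^2}-h^2(s-1)=s^2\check h(s^2)$ for smooth $\hat h,\check h$, substitution shows that every component $g_{ij}(x_1,x_2,x_3)$ is a polynomial (in the angular variables) in $x_1,x_2$ whose coefficients are smooth functions of $x_1^2+x_2^2$, hence smooth on a neighbourhood of the origin; the $SO(3)$-equivariance extends this to a smooth $SO(3)$-invariant metric across the whole singular orbit. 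The only mildly delicate point, and hence the main obstacle, is verifying that the coordinate change is genuinely smooth at the singular orbit in every direction on $\mathbb{S}^2$, not just along $\gamma$; this is handled by using $SO(3)$-invariance to transport the local computation to an arbitrary point of the orbit, thereby reducing the check to the one performed above in a single chart.
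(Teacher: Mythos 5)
The paper itself gives no details here (it simply notes the proof is similar to that of Lemma~\ref{lem_so2_smooth} and cites Hamilton), so you are right to model your argument on the $SO(2)\times SO(2)$ case. The parity analysis, the matching condition $f'(-1)=h(-1)$ at the collapsing orbit, and the use of Whitney's theorem on even functions for the converse are all the correct ingredients. However, your coordinate chart near the singular orbit is wrong, and the error traces back to a misreading of the geometry: for this $SO(3)$ action the two singular orbits are single fixed points (the origin and the point at infinity under the stereographic identification), and the \emph{entire} $\mathbb S^2$ factor collapses there. The map $(s,u_1,u_2)\mapsto(su_1,su_2)$ built from a stereographic chart on $\mathbb S^2$ is not a coordinate system: it lands in $\mathbb R^2$, the third coordinate $x_3$ is never defined, and even after any reasonable completion its image does not contain a neighbourhood of the origin, since a single chart on $\mathbb S^2$ only sweeps out a solid cone of directions. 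Relatedly, your claims that ``$g_{33}$ at the origin is determined by $h^2(-1)$'' as a separate block, and that one can ``transport the local computation to an arbitrary point of the orbit,'' import the structure of Lemma~\ref{lem_so2_smooth} (where the singular orbit is a circle and $\theta_2$ is a genuine non-collapsing direction) into a situation where no such direction and no such positive-dimensional orbit exist.

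The fix is to use the standard embedding $\omega\in\mathbb S^2\subset\mathbb R^3$ and set $x=s\,\omega$ with $s=r+1$, which does give a chart on a full neighbourhood of $\gamma(-1)$. Since $Q=\tfrac1{s^2}\big(\sum_i dx_i\otimes dx_i-ds\otimes ds\big)$ and $ds=\sum_i\tfrac{x_i}{s}dx_i$, one finds
\begin{align*}
g_{ij}=\frac{f(s-1)^2}{s^2}\,\delta_{ij}+\Big(h(s-1)^2-\frac{f(s-1)^2}{s^2}\Big)\frac{x_ix_j}{s^2}.
\end{align*}
Restricting to the coordinate axes then yields that $h^2$ and $f^2/s^2$ extend to smooth even functions of $s$ and that both tend to the same value $h(-1)^2$ at $s=0$ (forcing $f'(-1)=h(-1)>0$), which is your forward direction; conversely, writing $h(s-1)^2=\hat h(s^2)$ and $\tfrac{f(s-1)^2}{s^2}-h(s-1)^2=s^2\check h(s^2)$ via Whitney gives $g_{ij}=\big(\hat h(|x|^2)+|x|^2\check h(|x|^2)\big)\delta_{ij}-\check h(|x|^2)x_ix_j$, which is manifestly smooth. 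With this replacement your outline goes through; as written, the existence of the chart on which every subsequent step depends is not established.
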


The proof is similar to that of Lemma~\ref{lem_so2_smooth}; see~\cite[page~65]{RH84}.

Applying Lemma~\ref{so3smoothness} to the tensor field $Y$, we arrive at the following conclusions:
\begin{enumerate}
\item
One can extend $y$ to a smooth function on $(-3,3)$ that is odd about $r=-1$ and about $r=1$.
\item
The formula $y'(-1)=-y'(1)=1$ holds.
\end{enumerate}

Let us convert the equation $X(g)=Y$ to an ODE. Given a metric $g$ on $\mathbb S^3$ of the form~\eqref{so3metric}, define 
\begin{align}\label{lsigmaso3}
l=-\frac{f'}{h},\qquad \sigma=y^2. 
\end{align}
Clearly, $l$ can be extended to a smooth function on $(-3,3)$ that is even about $r=-1$ and about $r=1$.

\begin{lemma}\label{lem_so3_smth}
If a metric $g$ on $\mathbb S^3$ of the form~\eqref{so3metric} is such that $X(g)=Y$, then $l$ solves the ODE
\begin{align}\label{MEFL'}
l''=\frac{l^3-l}{\sigma},\qquad r\in(-1,1),
\end{align} 
subject to the boundary conditions
\begin{equation}\label{SCSO3}
l(-1)=-1, \qquad l(1)=1.
\end{equation}
Moreover, $l'>0$ for $r\in(-1,1)$, $l''(-1)>0$ and $l''(1)<0$. Conversely, suppose one can find $l$ that satisfies~(\ref{MEFL'})--(\ref{SCSO3}) and possesses the following properties:
\begin{enumerate}
\item
Onc can extend $l$ to a smooth function on $(-3,3)$ that is even about $r=-1$ and about $r=1$.
\item
The inequality $l'>0$ holds on $(-1,1)$. Also, $l''(-1)>0$ and $l''(1)<0$.
\end{enumerate}
Then there exists an $SO(3)$-invariant metric $g$ on $\mathbb S^3$ such that $X(g)=Y$.
\end{lemma}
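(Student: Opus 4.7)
The plan is to compute the cross curvature of a metric of the form~(\ref{so3metric}) directly via the warped-product formulas and then translate $X(g)=Y$ into an ODE for $l$. Using the standard expressions for the Ricci and scalar curvatures of a warped product together with $f'=-lh$ and $f''=-l'h-lh'$, I obtain
\begin{align*}
\Ric(g)&=\frac{2hl'}{f}\,dr\otimes dr+\Big(1-l^2+\frac{fl'}{h}\Big)Q,\\
S(g)&=\frac{4l'}{hf}+\frac{2(1-l^2)}{f^2}.
\end{align*}
Consequently, the Einstein $(1,1)$-tensor $\mathcal E(g)$ is diagonal in the splitting $\mathbb R\partial_r\oplus TS^2$ with eigenvalues $(l^2-1)/f^2$ and $-l'/(hf)$ (the latter with multiplicity two), and a direct calculation of $X(g)=\det(\mathcal E(g))\,g(\mathcal E(g)^{-1}\cdot,\cdot)$ yields
\begin{align*}
X(g)=\frac{(l')^2}{f^2}\,dr\otimes dr+\frac{(1-l^2)l'}{hf}\,Q.
\end{align*}

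For the forward direction, equating $X(g)$ with $Y=dr\otimes dr+\sigma Q$ gives $(l')^2=f^2$ and $(1-l^2)l'=\sigma hf$. Since $f>0$ on $(-1,1)$, the first identity and continuity force $l'$ to be of one sign; combined with the boundary values $l(\mp1)=\mp1$ obtained next, this produces $l'=f>0$ and $h=(1-l^2)/\sigma$. Substituting into $l''=f'=-lh$ yields the ODE $l''=(l^3-l)/\sigma$. Lemma~\ref{so3smoothness} applied to the smooth metric $g$ gives $f(\pm1)=0$ with $f'(-1)=h(-1)>0$ and $f'(1)=-h(1)<0$, so $l(-1)=-1$ and $l(1)=1$. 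The inequalities $l''(-1)>0$ and $l''(1)<0$ follow by expanding the identity $h\sigma=1-l^2$ at the endpoints: the parity conditions on $l$ and $y$ give $1-l^2=l''(-1)(r+1)^2+O((r+1)^4)$ and $\sigma=(r+1)^2+O((r+1)^4)$ near $r=-1$, hence $h(-1)=l''(-1)$, and symmetrically $h(1)=-l''(1)$ near $r=1$; positivity of $h$ then forces the claimed signs.

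For the converse, given $l$ with the listed properties, set
\begin{align*}
h:=\frac{1-l^2}{\sigma},\qquad f:=l'.
\end{align*}
The main obstacle is the singular behaviour at $r=\pm1$, where both numerator and denominator of $h$ vanish. Because $l$ is smooth and even about $r=\pm1$ with $l(\pm1)=\pm1$, and $y$ is smooth, odd about $r=\pm1$ with $|y'(\pm1)|=1$, the Taylor expansions above show that $(1-l^2)/\sigma$ extends to a smooth even function about each of $r=\pm1$ with $h(-1)=l''(-1)>0$ and $h(1)=-l''(1)>0$. The function $f=l'$ is smooth, positive on $(-1,1)$, odd about $r=\pm1$, and satisfies $f'(-1)=l''(-1)=h(-1)$ and $f'(1)=l''(1)=-h(1)$, so every hypothesis of Lemma~\ref{so3smoothness} is met. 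Hence $g=h^2\,dr\otimes dr+f^2Q$ extends to a smooth $SO(3)$-invariant metric on $\mathbb S^3$. The ODE rearranges to $f'=l''=-l(1-l^2)/\sigma=-lh$, equivalently $l=-f'/h$, so the cross-curvature formula above gives $X(g)=dr\otimes dr+\sigma Q=Y$.

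The principal technical issue throughout is the careful book-keeping of parity and order of vanishing at $r=\pm1$, where the ODE is singular and the metric degenerates in the $Q$-direction; once that is controlled, the rest is a direct computation using the warped-product structure.
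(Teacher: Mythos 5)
Your proposal is correct and follows essentially the same route as the paper: compute $\Ric$, $S$, $\Ein$ and hence $X(g)$ for the cohomogeneity one ansatz, read off $l'=\pm f$ and $l^2-1=\mp\sigma h$, fix the sign via positivity of $f$ and the boundary values from Lemma~\ref{so3smoothness}, and in the converse set $f=l'$, $h=(1-l^2)/\sigma$. Your extra book-keeping of the order of vanishing at $r=\pm1$ simply fills in details the paper leaves as ``easy to see''.
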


\begin{proof}
If $g$ is given by~\eqref{so3metric}, then
\begin{align*}
\Ric(g)&=-2\left(\frac{f''}{f}-\frac{h'f'}{hf}\right)dr\otimes dr+\left(-\frac{f''f}{h^2}+\frac{fh'f'}{h^3}-\frac{(f')^2}{h^2}+1\right)Q 
\\
&=\frac{2hl'}{f}\,dr\otimes dr+\left(\frac{fl'}{h}-l^2+1\right)Q,
\\
S(g)&=\frac{4l'}{hf}-\frac{2(l^2-1)}{f^2};
\end{align*}
see, e.g.,~\cite[page~67]{RH84} and~\cite[Lemma~3.1]{Pulemotov16}. Consequently,
\begin{align*}
\Ein(g)&=\frac{h^2(l^2-1)}{f^2}\,dr\otimes dr-\frac{fl'}{h}Q,
\\
X(g)&=\frac{(l')^2}{f^2}\,dr\otimes dr-\frac{l'(l^2-1)}{hf}Q.
\end{align*}
If $X(g)=Y$, then
\begin{align*}
l'=\pm f, \qquad l^2-1=\mp y^2h. 
\end{align*}
Lemma~\ref{so3smoothness} shows that conditions~\eqref{SCSO3} hold for~$l$. Since $f$ is assumed positive, these conditions imply $l'=f>0$ on $(-1,1)$. This means
\begin{align*}
    l''=f'=-lh=\frac{l(l^2-1)}{y^2}=\frac{l^3-l}{\sigma}.
\end{align*}
Conversely, suppose $l$ satisfies~\eqref{MEFL'}--\eqref{SCSO3} and possesses properties~1--2 from the statement of the lemma. Define
\begin{align*}
f=l',\qquad h=\frac{1-l^2}{y^2}.
\end{align*}
It is easy to see that the metric $g$ given by~\eqref{so3metric} solves $X(g)=Y$.
\end{proof}

\subsection{The linear problem and existence near the singular orbits}

Choose a function $\zeta_p:[-1,1]\to(0,\infty)$ for every~$p\in[0,1]$ so that the map
\begin{align*}
[-1,1]\times[0,1]\ni(r,p)\mapsto\zeta_p(r)\in(0,\infty)
\end{align*}
is smooth. Assume that
\begin{align}\label{bc_zeta_p}
\zeta_p(-1)=\zeta_p'(1)=\zeta_p'''(-1)=\zeta_p'''(1)=0,\qquad \zeta_p''(-1)=\zeta_p''(1)=2,\qquad p\in[0,1].
\end{align}
Formulas~\eqref{bc_zeta_p} imply
\begin{align}\label{zeta_p_asy}
\frac1{\zeta_p}=\frac1{(r+1)^2}+S_{\zeta_p},\qquad r\in(-1,1),
\end{align}
with $S_{\zeta_p}$ smooth on $[-1,1)$. Our plan is to prove the solvability of the ODE
\begin{align}\label{eq_zeta}
l''=\frac{l^3-l}{\zeta_p},\qquad r\in(-1,1),~p\in[0,1],
\end{align}
subject to the boundary conditions~\eqref{SCSO3}. Afterwards, we will use Brouwer degree theory to show that a solution possessing properties~1--2 from Lemma~\ref{lem_so3_smth} exists if $\zeta_p=\sigma$.

Let us begin by studying the linearised version of equation~\eqref{eq_zeta}.

\begin{lemma}\label{so3linearexistence}
Consider a continuous function $S$ on $[-1,-1+T]$ for some $T>0$. Given $k\in[0,\infty)\cap\mathbb Z$, assume that $(1+t)S$ is $C^k$-differentiable on $[-1,-1+T]$. There exists a unique $C^{k+2}$ function $s:[-1,-1+T]\to\mathbb R$ such that
\begin{align*}
 s''=\frac{2s}{(1+t)^2}+(1+t)S,\qquad s(-1)=s'(-1)=s''(-1)=0.
\end{align*}
Moreover,
 \begin{align}\label{so3linearestimates}
   \sup_{t\in(-1,-1+T]} \frac{|s(t)|}{(1+t)^2}\le \frac T4\sup_{t\in(-1,-1+T]}|S(t)|. 
 \end{align}
\end{lemma}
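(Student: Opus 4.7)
The plan is to reduce the problem to an Euler equation via the substitution $\tau=1+t$. Writing $\tilde S(\tau):=(1+t)S$, one has $\tilde S\in C^k$ and $\tilde S(0)=0$ by the continuity of~$S$, and the ODE becomes $s''-2s/\tau^2=\tilde S$ on $[0,T]$ with $s(0)=s'(0)=s''(0)=0$. The homogeneous equation $s''=2s/\tau^2$ has the fundamental system $\{\tau^2,\tau^{-1}\}$ with Wronskian $-3$, and variation of parameters suggests the candidate
\begin{equation*}
s(\tau)=\frac{1}{3\tau}\int_0^\tau(\tau^3-\rho^3)\,S(\rho-1)\,d\rho.
\end{equation*}

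Direct differentiation confirms that this $s$ satisfies the ODE, and the rapid vanishing of $\tau^3-\rho^3$ at the origin, combined with the ODE itself, yields $s(0)=s'(0)=s''(0)=0$. The bound (\ref{so3linearestimates}) follows from
\begin{equation*}
|s(\tau)|\le\frac{\sup|S|}{3\tau}\int_0^\tau(\tau^3-\rho^3)\,d\rho=\frac{\sup|S|\,\tau^3}{4}.
\end{equation*}
For uniqueness, the difference of two solutions lies in the two-dimensional kernel of the homogeneous Euler operator, spanned by $\tau^2$ and $\tau^{-1}$; continuity at~$0$ rules out $\tau^{-1}$, and the condition $s''(0)=0$ rules out~$\tau^2$.

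The main obstacle is establishing $s\in C^{k+2}$ rather than the weaker regularity one reads off the integral formula directly. The key identity, obtained by differentiating the explicit formula and observing that the $S(\tau-1)$ contributions cancel exactly, is
\begin{equation*}
\tau s(\tau)=\int_0^\tau\eta^2 P(\eta)\,d\eta,\qquad P(\eta):=\int_0^\eta S(\rho-1)\,d\rho.
\end{equation*}
Since $S(\rho-1)=\tilde S(\rho)/\rho$ is continuous on $[0,T]$ (by the continuity of~$S$) and, for $k\ge1$, belongs to $C^{k-1}$ by Lemma~\ref{DDT}, integration gives $P\in C^{\max(k,1)}$. Lemma~\ref{IIOR} with $i=2$ then yields $V(\tau):=\tau^{-2}\int_0^\tau\eta^2 P(\eta)\,d\eta\in C^{k+1}$, and $V(0)=0$ by a short limit computation, so Lemma~\ref{DDT} gives $s/\tau^2=V/\tau\in C^k$. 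Substituting back into $s''=2s/\tau^2+\tilde S$ shows $s''\in C^k$, and two integrations using the initial conditions conclude $s\in C^{k+2}$.
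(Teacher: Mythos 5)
Your proof is correct and follows essentially the same route as the paper's: an explicit solution formula for the singular Euler-type equation (your variation-of-parameters integral is the collapsed, single-integral form of the paper's double integral), the same direct estimate giving the factor $T/4$, and a regularity bootstrap through Lemmas~\ref{DDT} and~\ref{IIOR} combined with the ODE itself. The only differences are in bookkeeping — the paper applies Lemma~\ref{IIOR} directly to $(1+\rho)^{3}S(\rho)=(1+\rho)^{2}\cdot(1+\rho)S(\rho)$, whereas you first peel off a factor with Lemma~\ref{DDT} and then integrate — so no further comparison is needed.
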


\begin{proof}
The ODE for $s$ is equivalent to
\begin{align*}
\bigg((1+t)^4\bigg(\frac{s}{(1+t)^2}\bigg)'\bigg)'=(1+t)^3S.
\end{align*}
Solving explicitly and using the conditions at $t=-1$, we find
\begin{align*}
s(t)=(1+t)^2\int_{-1}^t\int_{-1}^\tau\frac{(1+\rho)^3}{(1+\tau)^4}S(\rho)\,d\rho\,d\tau.
\end{align*}
Clearly, \eqref{so3linearestimates} holds. Let us check the regularity of~$s$. It follows from~\eqref{so3linearestimates} that $s$ is $C^2$-differentiable. Therefore, we may assume that $k\ge1$. By Lemma~\ref{IIOR}, the expression 
\begin{align*}
R(t)=\frac{1}{(1+t)^2}\int_{-1}^{t}(1+\rho)^3S(\rho)d\rho
\end{align*}
defines a $C^{k+1}$ function on $[-1,-1+T]$. The continuity of $S$ implies
$$
\lim_{t\to-1}\frac{R(t)}{1+t}=R(-1)=0.
$$
Consequently, by Lemma~\ref{DDT}, the expression $\frac{R(t)}{(1+t)^2}$ defines a $C^{k-1}$ function on $[-1,-1+T]$. We compute
\begin{align*}
s''(t)=2\int_{-1}^{t}\frac{R(\tau)}{(1+\tau)^2} +
2\frac{R(t)}{(1+t)}+R'(t).
\end{align*}
Another application of Lemma~\ref{DDT} enables us to conclude that $s$ is $C^{k+2}$-differentiable.
\end{proof}

Next, we prove a short-time existence result for~\eqref{eq_zeta}.

\begin{lemma}\label{LocalExistence}
Given $\alpha\in\mathbb{R}$, equation~\eqref{eq_zeta} has a unique solution on $(-1,-1+T]$ for some $T>0$ with
\begin{equation}\label{ICL''-1}
l(-1)=-1, \qquad l'(-1)=0,\qquad l''(-1)=\alpha.
\end{equation}
The values of $l$ and $l'$ at every $t\in(-1,-1+T]$ depend continuously on~$\alpha$ and~$p$. Similarly, given $\beta\in\mathbb{R}$,~\eqref{eq_zeta} has a unique solution on $[1-T,1)$ for some $T>0$ with
\begin{equation}\label{ICL''1}
l(1)=1, \qquad l'(1)=0,\qquad l''(1)=-\beta.
\end{equation}
The values of $l$ and $l'$ at every $t\in[1-T,1)$ depend continuously on~$\beta$ and~$p$.
\end{lemma}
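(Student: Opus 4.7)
The plan is to mirror the contraction mapping argument of Lemma~\ref{IVPWD}, with Lemma~\ref{so3linearexistence} playing the role of Lemma~\ref{IV_linearised}.

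First I would normalise the initial data by setting $s(r)=l(r)+1-\tfrac{\alpha}{2}(r+1)^2$, so that \eqref{ICL''-1} becomes equivalent to $s(-1)=s'(-1)=s''(-1)=0$. Writing $l(l^2-1)=2A-3A^2+A^3$ with $A=\tfrac{\alpha}{2}(r+1)^2+s$ and combining with the expansion \eqref{zeta_p_asy}, a short algebraic manipulation shows that \eqref{eq_zeta} is equivalent to
\begin{align*}
s''-\frac{2s}{(r+1)^2}=(r+1)\,\mathcal{S}\bigg(r+1,\frac{s}{(r+1)^2},\alpha,p\bigg),
\end{align*}
where $\mathcal{S}:\mathbb{R}^4\to\mathbb{R}$ is a smooth function (smooth in $p$ thanks to the assumed joint smoothness of $(r,p)\mapsto\zeta_p(r)$). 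The key arithmetic input is that, after subtracting the singular linear term $2s/(r+1)^2$, every remaining contribution carries a factor of $(r+1)^2$ and depends on $s$ only through the bounded combination $s/(r+1)^2$.

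Next I would set up a Banach fixed-point problem. For $T\in(0,1)$, let $\mathfrak{B}$ be the completion of the space of smooth compactly supported functions on $(-1,-1+T]$ under the weighted norm
\begin{align*}
\|u\|_{\mathfrak{B}}=\sup_{t\in(-1,-1+T]}\frac{|u(t)|}{(1+t)^2},
\end{align*}
and let $\mathfrak{B}_1$ denote its closed unit ball. Define $\Lambda:\mathfrak{B}_1\to\mathfrak{B}$ by $\Lambda(u)=s$, where $s$ is the unique $C^2$ solution on $[-1,-1+T]$, provided by Lemma~\ref{so3linearexistence}, of the linear problem
\begin{align*}
s''-\frac{2s}{(r+1)^2}=(r+1)\,\mathcal{S}\bigg(r+1,\frac{u}{(r+1)^2},\alpha,p\bigg),\qquad s(-1)=s'(-1)=s''(-1)=0.
\end{align*}
The estimate \eqref{so3linearestimates} together with the smoothness (hence local Lipschitz continuity) of $\mathcal{S}$ shows that $\Lambda$ takes $\mathfrak{B}_1$ into itself and is a contraction on $\mathfrak{B}_1$ provided $T$ is sufficiently small. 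The unique fixed point produces the desired $l$ on $(-1,-1+T]$.

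Continuous dependence of $l$ and $l'$ on $(\alpha,p)$ follows from the very same contraction estimate, now applied to solutions corresponding to distinct parameter values, in the spirit of the end of the proof of Lemma~\ref{IVPWD}. The existence statement at $r=1$ is identical after the reflection $r\mapsto -r$, which preserves the hypothesis \eqref{bc_zeta_p}. I expect the main obstacle to be the first step: verifying the precise algebraic form of the nonlinearity, in particular the cancellation of the $\alpha$ term generated by $l''$ against the $\alpha$ term arising from $l^3-l$, and confirming that the residual can be written as a smooth function of $(r+1,\,s/(r+1)^2,\,\alpha,\,p)$ multiplied by $(r+1)$. Once this algebraic preparation is in place, the fixed-point iteration is routine.
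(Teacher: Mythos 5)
Your proposal is correct and follows essentially the same route as the paper: the same substitution $l=s+\tfrac{\alpha}{2}(t+1)^2-1$, the same reduction via \eqref{zeta_p_asy} to an equation of the form $s''=\tfrac{2s}{(1+t)^2}+(1+t)\mathcal S\big(t,\tfrac{s}{(1+t)^2},\alpha,p\big)$, and the same contraction argument in the unit ball of the weighted space with norm $\sup_t |u(t)|/(1+t)^2$ built on Lemma~\ref{so3linearexistence}, with continuous dependence and the reflection to $t=1$ handled exactly as in Lemma~\ref{IVPWD}. The algebraic cancellation you flag as the main obstacle is indeed the only computation required, and your expansion $l^3-l=2A-3A^2+A^3$ with $A=\tfrac{\alpha}{2}(r+1)^2+s$ carries it out correctly.
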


\begin{proof}
We begin by proving existence near $t=-1$. Formula~\eqref{zeta_p_asy} implies that the ODE
\begin{align*}
s''&=\frac{2s}{\zeta_p}+\frac{s^2(s-3)}{\zeta_p}+\frac{\alpha^3(t+1)^6}{8\zeta_p} \notag \\ &\hphantom{=}~+\frac{3\alpha^2(t+1)^4(s-1)}{4\zeta_p}+\frac{3\alpha(t+1)^2s(s-2)}{2\zeta_p}+\frac{\alpha(t+1)^2}{\zeta_p}-\alpha
\end{align*}
is equivalent to
\begin{align}\label{eq_s_SO3}
s''=\frac{2s}{(t+1)^2}+(t+1)\mathcal S\Big(t,\frac{s}{(t+1)^2},\alpha,p\Big),
\end{align}
where $\mathcal S:\mathbb R^4\to\mathbb R$ is a smooth function. One can recover solutions to~\eqref{eq_zeta} from those to~\eqref{eq_s_SO3} by setting
\begin{align*}
l=s+\frac{\alpha(t+1)^2}2-1. 
\end{align*}
Thus, it suffices to prove the existence of $s$ satisfying~\eqref{eq_s_SO3} with the initial conditions
\begin{align}\label{ini_hom_so3}
s''(-1)=s'(-1)=s(-1)=0.
\end{align}

Choose $T\in(0,1)$. Let $\mathfrak C$ stand for the completion of the space of smooth compactly supported functions on $(-1,-1+T]$ with respect to the norm
\begin{align*}
\|u\|_{\mathfrak C}=\sup_{t\in (-1,-1+T]}\frac{|u(t)|}{(t+1)^2}.
\end{align*}
Denote by $\mathfrak C_1$ the unit ball in $\mathfrak C$ centred at~0. We will apply the Banach fixed point theorem in $\mathfrak C_1$ to prove the solvability of~\eqref{eq_s_SO3}. Consider the map $L$ taking $u\in\mathfrak C$ to the unique $C^2$ function $x$ on $[-1,-1+T]$ such that
\begin{align*}
x''=\frac{2x}{(t+1)^2}+(t+1)\mathcal S\Big(t,\frac{u}{(t+1)^2},\alpha,p\Big), \qquad x''(-1)=x'(-1)=x(-1)=0.
\end{align*}
Lemma~\ref{so3linearexistence} guarantees the existence of~$x$. It also implies that $L$ is a contraction on~$\mathfrak C_1$ if $T$ is sufficiently small. Thus, $L$ has a fixed point $s$ satisfying~\eqref{eq_s_SO3}--\eqref{ini_hom_so3}. We conclude that equation~\eqref{eq_zeta} has a solution $l$ near $t=-1$ such that~\eqref{ICL''-1} holds. It remains to verify uniqueness and continuous dependence on $\alpha$ and~$p$. One can do this using inequality~\eqref{so3linearestimates} as in the proof of Lemma~\ref{IVPWD}. 

Similar arguments work near $t=1$.
\end{proof}

To extend $l$ constructed in Lemma~\ref{LocalExistence} to a larger interval, we consider a slight modification of equation~\eqref{eq_zeta}. Namely, suppose $E$ is a smooth function, nonpositive on $(-\infty,-1)$ and nonnegative on $(1,\infty)$, such that $E(x)=x^3-x$ if $|x|\le2$, $|E(x)|\le |x^3-x|$ if $2\le|x|\le3$, and $E(x)=0$ if $|x|\ge3$. Obviously, $|E|\le24$ on~$\mathbb R$. Instead of~\eqref{eq_zeta}, we will consider the equation
\begin{align}\label{MEFL}
l''=\frac{E(l)}{\zeta_p}.
\end{align}
Lemma~\ref{LocalExistence} and the boundedness of $E$ imply that~\eqref{MEFL} has a solution on $(-1,1)$ subject to condition~\eqref{ICL''-1}. This solution is unique.

\subsection{A particular choice of curvature}\label{subsec_sp_curv}

Our next goal is to prove the existence of a function $\sigma_0$ with a series of specific properties. We will use $\sigma_0$ as our ``starting point" when we apply Brouwer degree theory. In this subsection, $l$~denotes the solution to the initial-value problem
\begin{align}\label{eq_sigma_0}
l''=\frac{E(l)}{\sigma_0},\qquad l(-1)=-1,\qquad l'(-1)=0,\qquad l''(-1)=\alpha,
\end{align}
on $[-1,1)$.

\begin{lemma}\label{so3specialcurvature}
There exists a smooth function $\sigma_0:[-1,1]\to(0,\infty)$ possessing the following properties:
\begin{enumerate}
\item
The equality $\sigma_0(r)=(r+1)^2$ holds for all $r\in\left[-1,-\frac78\right]$.

\item
The function $\sigma_0$ is even about $r=0$.

\item
If $\alpha>0$, then the solution $l$ to the initial-value problem~(\ref{eq_sigma_0}) has positive derivative on~$(-1,0]$.

\item There exists $\alpha_0>0$ such that $l\left(-\frac78\right)>1$ if $\alpha\ge\alpha_0$.

\item If $l\big(\frac78\big)=0$, then $l'\big(\frac78\big)<\frac57$. 
\end{enumerate}
\end{lemma}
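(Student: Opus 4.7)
The task is to construct a specific $\sigma_0$ by gluing: properties 1 and 2 force $\sigma_0(r)=(r+1)^2$ on $[-1,-7/8]$ and (by evenness) $\sigma_0(r)=(1-r)^2$ on $[7/8,1]$, so the work is to choose $\sigma_0$ smoothly, positively and symmetrically about $0$ on $[-7/8,7/8]$, matching these polynomials to all orders at $r=\pm 7/8$. A natural first attempt is to take $\sigma_0\equiv M$ (a large constant to be fixed) on $[-3/4,3/4]$, with monotone bump-function transitions on $[-7/8,-3/4]$ (nondecreasing) and $[3/4,7/8]$ (nonincreasing). Additional flexibility in the shape of the transitions may be needed to secure property 5.

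Property 4 follows from scale invariance on $[-1,-7/8]$: with $\sigma_0=(r+1)^2$ there, the substitution $\tau=\log(r+1)$ converts the ODE into the autonomous system $\ddot l-\dot l=E(l)$, and solutions scale as $l_\alpha(r)=l_1(-1+\sqrt{\alpha}(r+1))$. Writing $V(l)=\int_{-1}^l E(s)\,ds$, the identity $\tfrac{d}{d\tau}(\tfrac12\dot l^2-V(l))=\dot l^2$ integrates to $\tfrac12\dot l^2=V(l)+\int_{-\infty}^\tau\dot l^2$, which forces $\dot l>0$ for all $\tau$ (whenever $\alpha>0$) and, after ruling out the equilibria $l\in\{-1,0,1\}\cup[3,\infty)$ of $E$ by direct inspection, yields $l_1(r)\to\infty$ as $r\to\infty$; hence $l_\alpha(-7/8)\to\infty$ as $\alpha\to\infty$.

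Property 3 uses the auxiliary quantity $\mathcal H(r)=\tfrac12(l')^2-V(l)/\sigma_0$, which satisfies $\mathcal H'=V(l)\sigma_0'/\sigma_0^2$. The $\tau$-identity translates to $\mathcal H(r)=(r+1)^{-2}\int_{-\infty}^{\log(r+1)}\dot l^2\,d\tau'$, strictly positive on $(-1,-7/8]$ for $\alpha>0$; the nondecreasing choice of $\sigma_0$ on $[-7/8,3/4]$ keeps $\mathcal H$ nondecreasing there, so $\mathcal H>0$ throughout. Then $(l')^2=2V(l)/\sigma_0+2\mathcal H>0$ on $(-1,3/4]$, and continuity from $l'(-1^+)=0^+$ (visible from $l_\alpha\sim -1+\tfrac{\alpha}{2}(r+1)^2$) gives $l'>0$ on $(-1,0]$.

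Property 5 is the heart of the proof and the main obstacle. The condition $l(7/8)=0$ combined with $\sigma_0(7/8)=1/64$ and $V(0)=1/4$ gives $\tfrac12(l'(7/8))^2=16+\mathcal H(7/8)$, so the claim reduces to $\mathcal H(7/8)<25/98-16=-1543/98$. Integrating $\mathcal H'$ and substituting $u=1/\sigma_0$ on $[3/4,7/8]$ expresses the drop as $-\int_{1/M}^{64}V(l(r(u)))\,du$, whose maximum magnitude $V_{\max}(64-1/M)\le\tfrac14\cdot 64=16$ is approached only when the trajectory $l$ hovers near zero throughout $[3/4,7/8]$, keeping $V(l)\approx V(0)=1/4$. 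The proof is completed by designing the middle $\sigma_0$ so that, for the critical $\alpha^*$ with $l_{\alpha^*}(7/8)=0$, the trajectory $l_{\alpha^*}$ does sit close to $0$ on $[3/4,7/8]$ and the rise of $\mathcal H$ on $[-1,3/4]$ is kept below $16-1543/98=25/98$. Controlling the latter via the explicit $\tau$-representation of $\mathcal H$ and the scaling $\mathcal H_\alpha(r)=\alpha\,\mathcal H_1(-1+\sqrt{\alpha}(r+1))$ on $[-1,-7/8]$ is the technical crux; the bound $5/7$ is tight enough that both $M$ and the profile on the transition intervals must be calibrated carefully.
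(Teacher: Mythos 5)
Your treatment of properties 1--4 is essentially sound. For property 4 you use the same mechanism as the paper: on $[-1,-\tfrac78]$ the equation is scale-invariant, the substitution $\tau=\log(r+1)$ makes it autonomous, and the monotonicity of $\tfrac12\dot l^2-V(l)$ shows the reference solution eventually exceeds $1$, whence $l(-\tfrac78)>1$ for large $\alpha$. For property 3 your energy quantity $\mathcal H=\tfrac12(l')^2-V(l)/\sigma_0$, with $\mathcal H'=V(l)\sigma_0'/\sigma_0^2\ge0$ wherever $\sigma_0'\ge0$ (since $V\ge0$), is a clean alternative to the paper's three-case analysis in $\alpha$ and works for any even $\sigma_0$ that is nondecreasing on $[-\tfrac78,0]$.

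The genuine gap is property 5, which you yourself flag as ``the technical crux'' and then do not carry out. Your reduction requires $\mathcal H(\tfrac78)<\tfrac{25}{98}-16$, i.e.\ the drop $-\int V(l)\,d(1/\sigma_0)$ on $[\tfrac34,\tfrac78]$ must come within $\tfrac{25}{98}-\mathcal H(\tfrac34)$ of its theoretical maximum $16$. But $\mathcal H(\tfrac34)\approx\tfrac12(l'(\tfrac34))^2$ for large $M$, and since $l$ must climb from $l(-\tfrac78)\ge-1$ to $0$ across an interval of length $\tfrac74$, a nearly affine critical trajectory has $l'\approx\tfrac47$ there, giving $\mathcal H(\tfrac34)\approx\tfrac{8}{49}=\tfrac{16}{98}$; you would then need the drop to be within $\tfrac{9}{98}\approx0.09$ of $16$, a near-saturation that must moreover hold for \emph{every} $\alpha$ with $l_\alpha(\tfrac78)=0$, not just one ``critical'' value. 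Nothing in the proposal establishes this, and it is far from clear the calibration closes. The paper avoids all of this with an elementary device: choose $\sigma_0$ so large in the middle that $\int_{-7/8}^{7/8}\sigma_0^{-1}<\tfrac1{192}$ (and also $<l_*/48$, which is what its version of property 3 uses). Then $|E|\le24$ gives $|l'(\tfrac78)-l'(r)|<\tfrac18$ on $[-\tfrac78,\tfrac78]$, so $l'(\tfrac78)\ge\tfrac57$ would force $l'>\tfrac47$ throughout, hence $l(\tfrac78)>l(-\tfrac78)+1>0$, contradicting $l(\tfrac78)=0$. You should replace your energy bookkeeping for property 5 by this mean-value-theorem argument (or supply the missing calibration in full detail); as written, the central claim of the lemma is unproved.
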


\begin{proof}
\textit{Step 1.} Denote by $\tilde l$ the solution to the initial-value problem
\begin{align}\label{eq_tildel_sp_cu}
\tilde l''=\frac{E(\tilde l)}{(r+1)^2},\qquad \tilde l(-1)=-1,\qquad \tilde l'(-1)=0,\qquad \tilde l''(-1)=1,
\end{align}
on $[-1,\infty)$. At Steps 1--3, we will show that $\tilde l'>0$ except at $r=-1$. To this end, define
$$
r_0=\sup\{\delta\ge-1\,|\,\tilde l(r)\in[-1,0]~\mbox{for}~r\in[-1,\delta]\}.
$$
The initial conditions in~\eqref{eq_tildel_sp_cu} imply that $r_0>-1$. The derivatives $\tilde l'$ and $\tilde l''$ are positive on $(-1,r_0)$. It is easy to see that $r_0<\infty$ and $\tilde l(r_0)=0$.
\vspace{5pt}

\noindent\textit{Step 2.} Consider the function
\begin{align*}
x(t)=\tilde l(e^t-1),\qquad t\in\mathbb R.
\end{align*}
Formulas~\eqref{eq_tildel_sp_cu} imply that $x$ satisfies
\begin{align*}
x''-x'-E(x)=0,\qquad \lim_{t\to-\infty}x(t)=-1,\qquad \lim_{t\to-\infty}x'(t)=0.
\end{align*}
Setting $t_0=\ln(r_0+1)$, we conclude that $x'$ and $x''$ are positive on $(-\infty,t_0)$ and $x(t_0)=0$. Moreover,
\begin{align*}
(x'-x)'=E(x)=x^3-x>0
\end{align*}
on this interval, and
\begin{align*}
\lim_{t\to-\infty}(x'-x)(t)=1.
\end{align*}
As a consequence, $$x'(t_0)>1+x(t_0)=1.$$
We will prove that $x'$ stays above 1 as long as $x$ is below~1. This will help us show that~$\tilde l'>0$.
\vspace{5pt}

\noindent\textit{Step 3.} Define
$$
t_1=\sup\{\delta\ge t_0\,|\,x(t)\in[0,1]~\mbox{for}~t\in[t_0,\delta]\}.
$$
Since $x(t_0)=0$ and $x'(t_0)>1$, this supremum is greater than~$t_0$. On the interval $(t_0,t_1)$,
\begin{align*}
x''=x'+E(x)\ge x'-1,
\end{align*}
which means $x'(t)\ge x'(t_0)>1$. It follows that $t_1<\infty$, $x(t_1)=1$ and $x'(t_1)>1$. Ergo, $\tilde l'>0$ on $[r_0,r_1]$, where $r_1=e^{t_1}-1$. Noting that $\tilde l(r_1)=x(t_1)=1$, one then checks easily that $\tilde l'>0$ on $(r_1,\infty)$. There exists a point $r_*\in(r_1,\infty)$ such that $\tilde l(r_*)>1$.
\vspace{5pt}

\noindent\textit{Step 4.} We are now ready to construct a function $\sigma_0$ with the properties listed in the lemma. Define
\begin{align*}
\alpha_0=64(r_*+1)^2.
\end{align*}
Choose a number $\alpha_-\in(0,\alpha_0)$ so that
\begin{align*}
\tilde l\Big(\frac{\sqrt{\alpha_-}}8-1\Big)<-\tfrac34, \qquad
\sqrt{\alpha_-}\,\sup\Big\{\tilde l'(r)\,\Big|\,r\in\Big[-1,\frac{\sqrt{\alpha_-}}8-1\Big]\Big\}<\tfrac14.
\end{align*}
The initial conditions in~\eqref{eq_tildel_sp_cu} imply that such a number exists. Denote
\begin{align*}
l_*=\sqrt{\alpha_-}\,\inf\Big\{\tilde l'(r)\,\Big|\,r\in\Big[\frac{\sqrt{\alpha_-}}8-1,\frac{\sqrt{\alpha_0}}8-1\Big]\Big\}.
\end{align*}
Let $\sigma_0$ be an even function on $(-1,1)$ such that $\sigma_0(r)=(r+1)^2$ for $r\in\left[-1,-\frac78\right]$ and
$$
\int_{-\frac78}^{\frac78}\frac1{\sigma_0(r)}\,dr<\min\Big\{\frac{l_*}{48},\frac1{192}\Big\}.
$$
Obviously, $\sigma_0$ possesses properties~1 and~2 listed in the lemma. Assume that $\alpha>0$. The solution $l$ to~\eqref{eq_sigma_0} satisfies 
\begin{align}\label{sp_cu_scaling}
l(r)=\tilde l(\sqrt{\alpha}(r+1)-1),\qquad r\in\left[-1,-\tfrac78\right].
\end{align}
If $\alpha\ge\alpha_0$, then
$$
l\left(-\tfrac78\right)\ge\tilde l\Big(\frac{\sqrt{\alpha_0}}8-1\Big)=\tilde l(r_*)>1.
$$
Thus, $\sigma_0$ possesses property~4.
\vspace{5pt}

\noindent\textit{Step 5.}
Formula~\eqref{sp_cu_scaling} implies that $l'>0$ on $\left[-1,-\tfrac78\right]$. Let us show that this inequality holds on $\left(-\tfrac78,\tfrac78\right]$. If $\alpha>\alpha_0$, the claim is evident since $E(l(r))$ is nonnegative for $r\in\left(-\tfrac78,\tfrac78\right]$. If $\alpha\in[\alpha_-,\alpha_0]$, then
$$
l'\left(-\tfrac78\right)=\sqrt{\alpha}\,\tilde l'\Big(\frac{\sqrt{\alpha}}8-1\Big)\ge l_*.
$$
In this case, the ODE in~\eqref{eq_sigma_0} implies
\begin{align*}
l'(r)&=l'\left(-\tfrac78\right)+\int_{-\frac78}^r\frac{E(l(\tau))}{\sigma_0(\tau)}\,d\tau 
\\
&\ge l_*-\sup_{\tau\in\mathbb R}|E(\tau)|\int_{-\frac78}^{\frac78}\frac1{\sigma_0(\tau)}\,d\tau \ge\frac{l_*}2>0,\qquad r\in\left(-\tfrac78,\tfrac78\right].
\end{align*}
Thus, the claim holds. Finally, assume $\alpha<\alpha_-$. Then, by~\eqref{sp_cu_scaling},
\begin{align*}
l\left(-\tfrac78\right)&=\tilde l\Big(\frac{\sqrt{\alpha}}8-1\Big)\le\tilde l\Big(\frac{\sqrt{\alpha_-}}8-1\Big)<-\tfrac34, \\
l'\left(-\tfrac78\right)&=\sqrt{\alpha}\,\tilde l'\Big(\frac{\sqrt{\alpha}}8-1\Big)\le\sqrt{\alpha_-}\,\sup\Big\{\tilde l'(r)\,\Big|\,r\in\Big[-1,\frac{\sqrt{\alpha_-}}8-1\Big]\Big\}<\tfrac14.
\end{align*}
This implies
\begin{align*}
l(r)&=l\left(-\tfrac78\right)+l'\left(-\tfrac78\right)\big(r+\tfrac78\big)+\int_{-\frac78}^r\int_{-\frac78}^\rho\frac{E(l(\tau))}{\sigma_0(\tau)}\,d\tau\,d\rho
\\
&<
-\tfrac14+48\int_{-\frac78}^{\frac78}\frac1{\sigma_0(\tau)}\,d\tau\le0,\qquad r\in\left(-\tfrac78,\tfrac78\right].
\end{align*}
Consequently, by the ODE in~\eqref{eq_sigma_0}, $l''>0$ on $\left(-\tfrac78,\tfrac78\right]$, which means $l'>0$ on this interval. We have shown that $\sigma_0$ possesses property~3.
\vspace{5pt}

\noindent\textit{Step 6.} It is easy to see that $l(r)$ remains negative for all $r$ if $\alpha\le0$. To prove property~5, assume that $l\left(\tfrac78\right)=0$ and $l'\left(\tfrac78\right)\ge\frac57$. Then $\alpha>0$ and, by property~3, $l'(r)>0$. Ergo, $l(r)\in[-1,0]$ for $r\in\left[-\tfrac78,\tfrac78\right]$. The ODE in~\eqref{eq_sigma_0} implies
\begin{align*}
l'(r)&=l'\left(\tfrac78\right)-\int_r^{\frac78}\frac{E(l(\tau))}{\sigma_0(\tau)}\,d\tau\ge \tfrac57-24\int_{-\frac78}^{\frac78}\frac1{\sigma_0(\tau)}\,d\tau>\tfrac47,\qquad r\in\left[-\tfrac78,\tfrac78\right].
\end{align*}
Consequently,
\begin{align*}
l\left(\tfrac78\right)\ge l\left(-\tfrac78\right)+1>0.
\end{align*}
This contradicts the assumption $l\big(\frac78\big)=0$. Thus, $\sigma_0$ must possess property~5.
\end{proof}
 
\subsection{Estimates and existence for the global problem}\label{so3existencesection}

Our plan is to use Brouwer degree theory to prove the existence of solutions to problem~\eqref{MEFL'}--\eqref{SCSO3} satisfying conditions~1 and~2 of Lemma~\ref{lem_so3_smth}. To that end, we introduce a continuous deformation of the function $\sigma_0$ into the function $\sigma$. More precisely, suppose
\begin{align}\label{zetap_def}
\zeta_p=(1-p)\sigma_0+p\sigma,\qquad p\in [0,1].
\end{align}
In order to be able to apply degree theory, we need the following result.

\begin{lemma}\label{MVA}
Let $l$ be a solution to equation~(\ref{MEFL}) on $(-1,1)$ with $\zeta_p$ given by~(\ref{zetap_def}), subject to the conditions
\begin{equation}\label{bound_ini_conds_so3}
l(-1)=-1, \qquad l(1)=1, \qquad l'(-1)=0=l'(1),\qquad l''(-1)=\alpha, \qquad l''(1)=-\beta.
\end{equation}
Then $\alpha<\alpha_{\max}$ and $\beta<\beta_{\max}$ for some positive $\alpha_{\max}$ and $\beta_{\max}$ independent of~$p$.
\end{lemma}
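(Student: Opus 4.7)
The plan is to argue by contradiction, imitating the blow-up strategy in the proof of Lemma~\ref{so2uniformbounds}. Suppose a sequence of solutions $(l_k)$ of~(\ref{MEFL})--(\ref{bound_ini_conds_so3}), with parameters $p_k \in [0,1]$, satisfies $\alpha_k = l_k''(-1) \to \infty$; the case $\beta_k \to \infty$ is analogous and handled at the very end. The first step is to establish the uniform a priori bound $-1 \le l_k \le 1$ on $[-1,1]$. To see $l_k \le 1$, suppose $l_k(r_1) > 1$ for some $r_1 \in (-1,1)$ and let $r_0$ be the largest point in $[-1,r_1)$ with $l_k(r_0)=1$. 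Local uniqueness of the initial value problem at $r_0$ rules out $l_k'(r_0) = 0$ (otherwise $l_k \equiv 1$, contradicting $l_k(-1) = -1$), so $l_k'(r_0) > 0$. But the sign condition on $E$ gives $l_k'' \ge 0$ wherever $l_k \ge 1$, hence $l_k'$ stays $\ge l_k'(r_0) > 0$ throughout $[r_0,1]$, forcing $l_k(1) > 1$ and contradicting $l_k(1)=1$. The bound $l_k \ge -1$ follows by a mirror argument.

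The second step is the blow-up itself. I set $\hat l_k(u) = l_k\bigl(-1+(u+1)/\sqrt{\alpha_k}\bigr)$ for $u \in [-1, 2\sqrt{\alpha_k}-1]$, so that $\hat l_k(-1)=-1$, $\hat l_k'(-1)=0$, $\hat l_k''(-1)=1$, and
\[
\hat l_k''(u) = \frac{E(\hat l_k(u))}{\eta_k(u)}, \qquad \eta_k(u) = \alpha_k\, \zeta_{p_k}\bigl(-1+(u+1)/\sqrt{\alpha_k}\bigr).
\]
Since $\sigma_0(r)=(r+1)^2$ on $\bigl[-1,-\tfrac78\bigr]$ by property~1 of Lemma~\ref{so3specialcurvature} and $\sigma(r) = y(r)^2 = (r+1)^2 + O((r+1)^3)$ (from $y(-1)=0$ and $y'(-1)=1$), one checks that $\eta_k(u) \to (u+1)^2$ uniformly on every compact $[-1,U]$, uniformly in $p_k$. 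Combining the fixed-point estimate near $u=-1$ that underlies Lemma~\ref{LocalExistence} with classical ODE continuous dependence on each subinterval $[-1+\epsilon, U]$, I would then deduce $\hat l_k \to \tilde l$ uniformly on $[-1,U]$, where $\tilde l$ is the function studied in Steps~1--3 of the proof of Lemma~\ref{so3specialcurvature}.

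That analysis shows $\tilde l$ eventually exceeds any prescribed threshold; in particular one may pick $U > -1$ with $\tilde l(U) > \tfrac{3}{2}$. For $k$ large we then have $U \le 2\sqrt{\alpha_k}-1$ and $\hat l_k(U) > 1$, which reads as $l_k\bigl(-1+(U+1)/\sqrt{\alpha_k}\bigr) > 1$ at an interior point of $(-1,1)$, contradicting the Step~1 bound. Hence $\alpha_k$ cannot escape to infinity. The bound on $\beta_k$ comes from the mirror rescaling $\check l_k(u) = l_k\bigl(1+(u-1)/\sqrt{\beta_k}\bigr)$ at the right endpoint; the limiting ODE $\check l'' = E(\check l)/(1-u)^2$ with $\check l(1)=1$, $\check l'(1)=0$, $\check l''(1)=-1$ admits a solution that eventually drops below $-1$ by an argument parallel to Lemma~\ref{so3specialcurvature}, contradicting $l_k \ge -1$.

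The main obstacle I anticipate is precisely the uniform convergence $\hat l_k \to \tilde l$ at the singular endpoint $u=-1$: the rescaled equation has a genuine singularity there, and standard continuous dependence does not apply without first controlling $\hat l_k$ close to $u=-1$. The required control---essentially a bound of the form $|\hat l_k(u)+1-(u+1)^2/2| = O((u+1)^3)$ with constants independent of $\alpha_k$ and $p_k$---should follow from the linear estimate of Lemma~\ref{so3linearexistence} combined with the Banach fixed-point machinery from Lemma~\ref{LocalExistence}, after which ordinary ODE continuity on each compact subinterval $[-1+\epsilon, U]$ closes the argument.
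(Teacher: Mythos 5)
Your proposal is sound and I believe it can be completed; it follows the same broad blow-up-by-contradiction strategy as the paper's proof, but with a genuinely different normalisation that changes the structure of the argument. The paper rescales by $\lambda_k=(\sup_{[-1,0]}(|l_k''|+|l_k'|^2))^{-1/2}$, which forces a case analysis (whether the maximising point $\tau_k$ stays within $O(\lambda_k)$ of $-1$, and then whether $\lambda_k\sqrt{\alpha_k}$ tends to $0$ or not) together with an Arzel\`a--Ascoli compactness step away from the singular endpoint; the contradiction you derive ($\hat l_k$ exceeding $1$ at an interior point) appears there only as one of several terminal sub-cases. Your choice of the scale $1/\sqrt{\alpha_k}$ pins $\hat l_k''(-1)=1$, so the limit is forced to be exactly the model solution $\tilde l$ and the entire case analysis collapses; the price is that all the weight falls on the quantitative stability of the singular initial-value problem under the $O(1/\alpha_k)$ perturbation of the coefficient, $1/\eta_k=1/(u+1)^2+O(1/\alpha_k)$. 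You correctly identify this as the one genuine obstacle, and the resolution you sketch is the right one: it is essentially the estimate the paper itself carries out in Step~4 of its proof (the bound on $s_k=\bar l_k-\tilde l_k$ via Lemma~\ref{so3linearexistence} and the contraction set-up of Lemma~\ref{LocalExistence}), followed by ordinary Gronwall control on $[-1+T,U]$, where $E$ bounded and $\eta_k$ bounded below rule out blow-up. Two minor points to tidy when writing this up: in the lower bound $l_k\ge-1$ the exceptional point $r_0=-1$ must be treated separately (there the IVP is singular, so you should instead observe that $\alpha\le0$ forces $l_k''\le0$ and hence $l_k$ non-increasing wherever $l_k\le-1$, contradicting $l_k(1)=1$); and you should record explicitly that $\tilde l'>0$ persists past $\tilde l=1$ (Steps~1--3 of Lemma~\ref{so3specialcurvature}) so that a finite $U$ with $\tilde l(U)>3/2$ exists and $-1+(U+1)/\sqrt{\alpha_k}$ lands in $\big[-1,-\tfrac78\big]$ for large~$k$, where the expansion of $\zeta_p$ you use is valid.
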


\begin{proof}
We will prove the existence of $\alpha_{\max}$. Similar arguments work for $\beta_{\max}$.
\vspace{5pt}

\noindent\textit{Step 1.}
 Assume that $\alpha_{\max}$ does not exist. If $r_1$ is the first point in $(-1,1)$ such that $l(r_1)=1$, one can easily show using~\eqref{MEFL} and~\eqref{bound_ini_conds_so3} that $l'>0$ on $[r_1,1)$. The equality $l(1)=1$ implies that $l\ge-1$ on~$[-1,1]$. Consequently, there exist sequences $(p_k)_{k=1}^\infty\subset[0,1]$ and $(\alpha_k)_{k=1}^\infty\subset(1,\infty)$ satisfying
\begin{align}\label{alpha_to_infty_so3}
\lim_{k\to\infty}\alpha_k=\infty,\qquad |l_k(r)|\le1,\qquad r\in[-1,0],
\end{align}
where $l_k$ is the solution to~\eqref{MEFL} with $p=p_k$, subject to
\begin{align*}
l_k(-1)=-1,\qquad l_k'(-1)=0,\qquad l_k''(-1)=\alpha_k.
\end{align*}
We will show that \eqref{alpha_to_infty_so3} gives rise to a contradiction by looking at appropriate re-scalings of~$l_k$. Choose $\tau_k\in[-1,0]$ so that
\begin{align*}
|l''_k(\tau_k)|+|l_k'(\tau_k)|^2=\sup_{t\in [-1,0]}\big(|l_k''(t)|+|l_k'(t)|^2\big)
\end{align*}
and denote
\begin{align*}
\lambda_k=\frac1{\sqrt{|l''(\tau_k)|+|l'(\tau_k)|^2}}.
\end{align*}
By passing to subsequences if necessary, we may assume that $(\lambda_k)_{k=1}^\infty$, $(\lambda_k^2\alpha_k)_{k=1}^{\infty}$ and $\big(\frac{\tau_k+1}{\lambda_k}\big)_{k=1}^\infty$ are all monotone. Formula~\eqref{alpha_to_infty_so3} implies that $(\lambda_k)_{k=1}^\infty$ goes to~0,
and the definitions of $\lambda_k$ and $\alpha_k$ imply that $0\le \lambda^2_k\alpha_k\le 1$. We will use the number $\lambda_k$ to re-scale~$l_k$.
\vspace{5pt}

\noindent\textit{Step 2.}
Assume that
\begin{align}\label{so3case1}
\lim_{k\to\infty}\frac{\tau_k+1}{\lambda_k}<\infty.
\end{align}
Define
\begin{align*}
\bar l_k(t)=l_k(\lambda_k(1+t)-1), \qquad t\in\Big[-1,\frac{1-\lambda_k}{\lambda_k}\Big].
\end{align*}
Then 
\begin{align*}
\bar l_k''(t)&=\frac{\lambda_k^2\big(\bar l_k(t)^3-\bar l_k(t)\big)}{\zeta_p(\lambda_k(1+t)-1)},\qquad \bar l_k''(-1)=\lambda_k^2\alpha_k.
\end{align*}
Formula~\eqref{zeta_p_asy} implies
\begin{align*}
\frac{\lambda_k^2}{\zeta_p(\lambda_k(1+t)-1)}&=\frac{1}{(1+t)^2}+\lambda_k^2S_{\zeta_p}(\lambda_k(1+t)-1).
\end{align*}
We conclude that
\begin{align*}
\bar l_k''(t)&=\frac{\bar l_k(t)^3-\bar l_k(t)}{(1+t)^2}+\lambda_k^2S_{\zeta_p}(\lambda_k(1+t)-1)\big(\bar l_k(t)^3-\bar l_k(t)\big).
\end{align*}
\vspace{5pt}

\noindent\textit{Step 3.}
Define
\begin{align*}
\tilde l_k(t)=\tilde l(\lambda_k\sqrt{\alpha_k}(1+t)-1),\qquad t\in\Big[-1,\frac{1-\lambda_k}{\lambda_k}\Big],
\end{align*}
where $\tilde l$ is the solution to~\eqref{eq_tildel_sp_cu}. Consider the function
\begin{align*}
s_k=\bar l_k-\tilde{l}_k.
\end{align*}
We will use Lemma~\ref{so3linearexistence} to show that the sequence $(s_k)_{k=1}^\infty$ converges to~0 on every bounded interval. This will contradict the inequality $|l_k(r)|\le1$ in~\eqref{alpha_to_infty_so3} if $\lim_{k\to \infty}\lambda_k\sqrt{\alpha_k}>0$. 
\vspace{5pt}

\noindent\textit{Step 4.}
Denote
\begin{align*}
T_0=\sup\{t\in(-1,\infty)\,|\,|\tilde l(t)|\le2\},\qquad \lambda=\sup_{t\in[-1,T_0]}|\tilde l'(t)|.
\end{align*}
Arguing as in the proof of Lemma~\ref{so3specialcurvature}, it is easy to prove that $T_0<\infty$. Without loss of generality, suppose that $\frac{1-\lambda_k}{\lambda_k}\ge T_0$ for all~$k$. Since $\lambda_k\sqrt{\alpha_k}(1+t)-1\le t$, we have
\begin{align*}
|\tilde l_k(t)|&=|\tilde l(\lambda_k\sqrt{\alpha_k}(1+t)-1)|\le2,
\\
|\tilde l_k'(t)|&=\lambda_k\sqrt{\alpha_k}|\tilde l'(\tau)||_{\tau=\lambda_k\sqrt{\alpha_k}(1+t)-1}\le\lambda, \qquad t\in[-1,T_0].
\end{align*}
The mean value theorem implies
\begin{align*}
|\bar l_k(t)+1|&\le(t+1)\sup_{\tau\in[-1,t]}|\bar l_k'(\tau)|\le\lambda_k(t+1)\sup_{\tau\in[-1,0]}|l_k'(\tau)|\le(t+1),
\\
|\tilde l_k(t)+1|&\le(t+1)\sup_{\tau\in[-1,t]}|\tilde l_k'(\tau)|\le\lambda(t+1),
\\
\big|\bar l_k(t)^3-\bar l_k(t)\big|&\le|\bar l_k(t)||\bar l_k(t)-1||\bar l_k(t)+1|\le2(t+1),\qquad t\in[-1,T_0].
\end{align*}
Next, we estimate
\begin{align*}
\bigg| s_k''(t)-\frac{2s_k(t)}{(1+t)^2}\bigg|
&\le\bigg|\frac{\bar l_k^3(t)-\bar l_k(t)}{(1+t)^2}-\frac{\tilde l_k^3(t)-\tilde l_k(t)}{(1+t)^2}-\frac{2(\bar l_k(t)-\tilde l_k(t))}{(1+t)^2}\bigg|
\\
&\hphantom{=}~+\lambda_k^2\big|S_{\zeta_p}(\lambda_k(1+t)-1)\big(\bar l_k(t)^3-\bar l_k(t)\big)\big|
\\
&\le\bigg|\frac{s_k(t)\big(\bar l_k^2(t)+\tilde l_k^2(t)+\bar l_k(t)\tilde l_k(t)-3\big)}{(1+t)^2}\bigg|+2S_*\lambda_k^2(1+t)
\\
&\le\bigg|\frac{s_k(t)\big((\bar l_k(t)+\tilde l_k(t)-1)(\bar l_k(t)+1)+(\tilde l_k(t)-2)(\tilde l_k(t)+1)\big)}{(1+t)^2}\bigg|+2S_*\lambda_k^2(1+t)
\\
&\le\bigg|\frac{4(1+\lambda)s_k(t)}{1+t}\bigg|+2S_*\lambda_k^2(1+t),\qquad t\in[-1,T_0],
\end{align*}
where
\begin{align*}
S_*=\sup\{|S_{\zeta_p}(t)|\,|\,t\in(-1,0],~p\in[0,1]\}<\infty.
\end{align*}
By Lemma~\ref{so3linearexistence},
\begin{align*}
\sup_{t\in[-1,-1+T_1]}\bigg|\frac{s_k(t)}{(1+t)^2}\bigg|\le\frac{T_1}4\sup_{t\in[-1,-1+T_1]}\bigg(\bigg|\frac{4(1+\lambda)s_k(t)}{(1+t)^2}\bigg|+2S_*\lambda_k^2\bigg)
\end{align*}
for all $T_1\le T_0+1$. Setting
\begin{align*}
T_1=\min\Big\{T_0+1,\frac1{2(1+\lambda)}\Big\},
\end{align*}
we conclude that
\begin{align*}
\sup_{t\in[-1,-1+T_1]}\bigg|\frac{s_k(t)}{(1+t)^2}\bigg|\le T_1S_*\lambda_k^2.
\end{align*}
Consequently, the sequence $(s_k)_{k=1}^{\infty}$ converges to 0 in the $C^2$-norm on $[-1,-1+T_1]$.
\vspace{5pt}

\noindent\textit{Step 5.}
Observe that
\begin{align*}
s_k''(t)=\psi_{1k}(t)s_k(t)+\lambda_k^2\psi_{2k}(t),
\end{align*}
where
\begin{align*}
\psi_{1k}(t)&=\frac{\bar l_k^2(t)+\tilde l_k^2(t)+\bar l_k(t)\tilde l_k(t)-1}{(1+t)^2},\\
\psi_{2k}(t)&=S_{\zeta_p}(\lambda_k(1+t)-1)\big(\bar l_k(t)^3-\bar l_k(t)\big).
\end{align*}
Given $T>T_1$, there exists $K\in \mathbb{N}$ such that the sequences $(s_k)_{k=K}^\infty$, $(s_k')_{k=K}^\infty$, $(s_k'')_{k=K}^\infty$,
$(\psi_{1k})_{k=K}^\infty$ and  $(\psi_{2k})_{k=K}^\infty$ are uniformly bounded and uniformly equicontinuous on~$[-1+T_1,-1+T]$. By the Arzel\`a--Ascoli theorem, passing to subsequences if necessary, we may assume that $(s_k)_{k=1}^\infty$ converges in the $C^2$-norm to some function $s$ while $(\psi_{1k})_{k=1}^\infty$ and $(\psi_{2k})_{k=1}^\infty$ converge in the sup-norm to some $\psi_1$ and $\psi_2$. Moreover, 
\begin{align*}
s''(t)=\psi_1(t)s(t),\qquad s(-1+T_1)=s'(-1+T_1)=0,
\end{align*}
which means $s$ is identically 0 on $[-1+T_1,-1+T]$. We conclude that $(s_k)_{k=1}^\infty$ converges to 0 in the $C^2$-norm on $[-1,-1+T]$.
\vspace{5pt}

\noindent\textit{Step 6.}
Choose $T$ so that 
\begin{align*}
T>\sup_{k\in\mathbb N}\frac{\tau_k+1}{\lambda_k}.
\end{align*}
Then
\begin{align}\label{deep_est_contra_so3}
\sup_{[-1,-1+T]}\big(|\bar l_k''(t)|+|\bar l_k'(t)|^2\big)=\lambda_k^2\big(|l''_k(\tau_k)|+|l_k'(\tau_k)|^2\big)=1
\end{align}
for all $k$. If
\begin{align*}
\lim_{k\to\infty}\lambda_k\sqrt{\alpha_k}=0,
\end{align*}
then $\tilde l_k$ converges to $-1$ in the $C^2$-norm on $[-1,-1+T]$, which means
\begin{align*}
\bar l_k(t)=s_k(t)+\tilde l_k(t)
\end{align*}
also converges to~$-1$. This contradicts~\eqref{deep_est_contra_so3}. If
\begin{align*}
\lim_{k\to\infty}\lambda_k\sqrt{\alpha_k}>0,
\end{align*}
choose $T$ so that 
\begin{align*}
\tilde l_k(-1+T)=\tilde l(\lambda_k\sqrt{\alpha_k}T-1)>2
\end{align*}
for all $k$. Then
\begin{align*}
\bar l_k(-1+T)=s_k(-1+T)+\tilde l_k(-1+T)>1,
\end{align*}
provided $k$ is large. This is impossible in light of~\eqref{alpha_to_infty_so3}. The contradictions we obtained prove the assertion of the lemma in the case where~\eqref{so3case1} holds.
\vspace{5pt}

\noindent\textit{Step 7.}
Assume that
\begin{align*}
\lim_{k\to\infty}\frac{\tau_k+1}{\lambda_k}=\infty.
\end{align*}
For $k\in\mathbb N$, consider the function
\begin{align*}
\hat l_k(t)=l_k(\tau_k+\lambda_kt),\qquad t\in\Big[-\frac{\tau_k+1}{\lambda_k},-\frac{\tau_k}{\lambda_k}\Big].
\end{align*}
Clearly,
\begin{align*}
|\hat l_k''(t)|&\le\frac{\lambda_k^2|\hat l_k(t)^3-\hat l_k(t)|}{(1+\tau_k+\lambda_kt)^2}+2\lambda_k^2S_*\le\frac{2\lambda_k^2}{(1+\tau_k+\lambda_kt)^2}+2\lambda_k^2S_*
\end{align*}
on $\big(-\frac{\tau_k+1}{\lambda_k},-\frac{\tau_k}{\lambda_k}\big]$. This shows that $\hat l_k''$ converges uniformly to 0 on $[-T,0]$ for every $T>0$. Also,
\begin{align*}
|\hat l_k''(0)|+|\hat l_k'(0)|^2=\lambda_k^2\big(|l_k''(\tau_k)|+|l_k'(\tau_k)|^2\big)=1,
\end{align*}
which means $\hat l_k'(0)$ goes to 1 as $k\to\infty$. Consequently,
\begin{align*}
\lim_{k\to\infty}\sup_{t\in[-T,0]}|\hat l_k'(t)-1|=\lim_{k\to\infty}\sup_{t\in[-T,0]}|\hat l_k'(t)-\hat l_k'(0)|\le T\lim_{k\to\infty}\sup_{t\in[-T,0]}|\hat l_k''(t)|=0.
\end{align*}
We conclude that
\begin{align*}
|\hat l_k(0)-\hat l_k(-T)|\ge T\inf_{t\in[-T,0]}|\hat l_k'(t)|>2
\end{align*}
if $T=3$ and $k$ is sufficiently large. At the same time,
\begin{align*}
|\hat l_k(0)-\hat l_k(-T)|\le |\hat l_k(0)|+|\hat l_k(-T)|\le2
\end{align*}
by~\eqref{alpha_to_infty_so3}. This contradiction completes the proof.
\end{proof}

We are now in a position to take the final step in the proof of Theorem~\ref{so3exist}. Lemma~\ref{LocalExistence} and the Lipschitz continuity of $E$ on $\mathbb R$ imply that~\eqref{MEFL} possesses a unique solution on $(-1,0]$ such that conditions~\eqref{ICL''-1} hold and a unique solution on $[0,1)$ such that~\eqref{ICL''1} hold. We denote these solutions $l^{p,\alpha}_{-}$ and  $l^{p,\beta}_{+}$, respectively. Consider the function $J_p:\mathbb{R}^2\to \mathbb{R}^2$ given by 
\begin{align*}
J_p(\alpha,\beta)=\big((l^{p,\beta}_{+}-l^{p,\alpha}_{-})(0),(l^{p,\beta}_{+}-l^{p,\alpha}_{-})'(0)\big).
\end{align*}
Our proof of Theorem~\ref{so3exist} will rely on the analysis of the degree-theoretic properties of~$J_p$.

\begin{lemma}\label{MET}
There exists a function $l:[-1,1]\to\mathbb R$ that solves problem~(\ref{MEFL'})--(\ref{SCSO3}) and satisfies conditions~1 and~2 of Lemma~\ref{lem_so3_smth}.
\end{lemma}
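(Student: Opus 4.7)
The plan is to apply Brouwer degree theory to the family $\{J_p\}_{p\in[0,1]}$, paralleling the strategy of Section~\ref{sec_so2_exist}. Lemma~\ref{MVA} confines all zeros of $J_p$ uniformly to a disc $\Omega_R\subset\mathbb R^2$ for every $p\in[0,1]$, and continuous dependence from Lemma~\ref{LocalExistence} ensures that $\deg(J_p,\Omega_R,0)$ is well-defined and independent of~$p$. It therefore suffices to show $\deg(J_0,\Omega_R,0)\neq0$.

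To compute $\deg(J_0,\Omega_R,0)$, I would exploit the evenness of $\sigma_0$ about $r=0$ (property~2 of Lemma~\ref{so3specialcurvature}) together with the oddness of~$E$ (a natural choice of cutoff compatible with its sign constraints). The transformation $l(r)\mapsto -l(-r)$ preserves equation~\eqref{MEFL} at $p=0$ and interchanges the two Cauchy problems, so by uniqueness in Lemma~\ref{LocalExistence} we obtain $l^{0,\alpha}_+(r)=-l^{0,\alpha}_-(-r)$. Writing $f(\alpha):=l^{0,\alpha}_-(0)$ and $g(\alpha):=(l^{0,\alpha}_-)'(0)$, this gives
\begin{align*}
J_0(\alpha,\beta)=\bigl(-f(\alpha)-f(\beta),\;g(\beta)-g(\alpha)\bigr).
\end{align*}
Since $f(0)=-1$ (as $\alpha=0$ forces $l\equiv-1$ on $[-1,0]$) and properties~3--4 of $\sigma_0$ yield $f(\alpha)>1$ for $\alpha\ge\alpha_0$, the intermediate value theorem provides $\alpha_*\in(0,\alpha_0)$ with $f(\alpha_*)=0$, so that $(\alpha_*,\alpha_*)$ is a zero of~$J_0$.

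The main obstacle is to upgrade this existence to a nonzero degree. I would argue that $(\alpha_*,\alpha_*)$ is the unique zero of $J_0$ in $\Omega_R$ and that the Jacobian, which reduces to $-2f'(\alpha_*)g'(\alpha_*)$, is nondegenerate. Uniqueness is expected to follow from strict monotonicity of $f$, obtained via the variational equation along the $\alpha$-family of solutions, which is a linear Sturm-type ODE with coefficient $(3l^2-1)/\sigma_0$; property~5 of $\sigma_0$ would then be used to exclude off-diagonal pairs $(\alpha,\beta)$ with $f(\alpha)=-f(\beta)\ne0$ and $g(\alpha)=g(\beta)$. Nondegeneracy of $f'(\alpha_*)$ and $g'(\alpha_*)$ is a consequence of the same variational equation. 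Carrying out these monotonicity and transversality estimates, which combine the careful construction of $\sigma_0$ with linear ODE comparison, will be the most delicate part of the argument.

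Once $\deg(J_0,\Omega_R,0)=\pm1$, homotopy invariance yields $\deg(J_1,\Omega_R,0)\ne0$ and hence a zero $(\hat\alpha,\hat\beta)$ of~$J_1$. Gluing $l^{1,\hat\alpha}_-$ and $l^{1,\hat\beta}_+$ produces a $C^2$ function $l:[-1,1]\to\mathbb R$ solving~\eqref{MEFL} with~\eqref{SCSO3}. A maximum-principle argument in the spirit of Lemma~\ref{so2_lem_max_princ}, using the sign of $E$ on $\mathbb R\setminus[-1,1]$, shows $|l|\le1$, so $E(l)=l^3-l$ and~\eqref{MEFL'} holds. Conditions~1 and~2 of Lemma~\ref{lem_so3_smth} are then verified as follows: smoothness and evenness of $l$ about $r=\pm1$ follow by a Taylor-expansion induction identical to Steps~1--3 of the proof of Theorem~\ref{so2existence}; positivity $\hat\alpha,\hat\beta>0$ is forced by the boundary conditions, since otherwise $l$ would enter the region $|l|>1$ where $E$ has the wrong sign and could not meet $l(\pm1)=\pm1$; and the strict monotonicity $l'>0$ on $(-1,1)$ follows from a phase-plane analysis ruling out a first interior critical point $r_0$ of $l$---the case $l(r_0)\in(-1,0)$ is excluded immediately by the ODE, while $l(r_0)\in[0,1]$ would force a descending trajectory incompatible with the terminal data $l(1)=1$, $l'(1)=0$, $l''(1)=-\hat\beta<0$.
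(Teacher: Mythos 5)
Your overall strategy (degree theory for $J_p$, homotopy from $\sigma_0$ to $\sigma$) is the paper's, but two steps are genuinely incomplete. First, the computation of $\deg(J_0)$: you reduce it to showing that $(\alpha_*,\alpha_*)$ is the \emph{unique} zero of $J_0$ and that the Jacobian $-2f'(\alpha_*)g'(\alpha_*)$ is nonzero, and you defer precisely these monotonicity and transversality facts as ``the most delicate part''. They are not routine: the variational equation $v''=(3l^2-1)v/\sigma_0$ has a coefficient that changes sign where $|l|<1/\sqrt3$, so strict monotonicity of $f$ and $g$ does not follow from a simple comparison argument; moreover, property~5 of Lemma~\ref{so3specialcurvature} concerns $l'\big(\tfrac78\big)$ and is used in the paper only for the non-uniqueness construction in Section~\ref{sec_nonuniq} --- it says nothing about the matching data at $r=0$. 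The paper avoids uniqueness and nondegeneracy altogether: it works on the box $\Omega=[0,\alpha_{\max}]\times[0,\beta_{\max}]$ (note that Lemma~\ref{MVA} only supplies upper bounds; the lower bounds come from the separate observation that $\alpha,\beta\ge0$ is forced) and computes the winding number of $J_0(\partial\Omega)$ edge by edge, using properties~3 and~4 of $\sigma_0$ to place the image of each edge in a fixed half-plane. You would need either to carry out your monotonicity estimates in full or to switch to this boundary computation.

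Second, and more seriously, your verification of condition~2 of Lemma~\ref{lem_so3_smth} does not close. The phase-plane argument correctly excludes a first interior critical point with $l(r_0)\in(-1,0]$, but for $l(r_0)\in(0,1)$ you merely assert that the ensuing descent is ``incompatible with the terminal data''. It is not: after such a local maximum the solution may descend to a local minimum with value in $(-1,0)$ (where $l''>0$) and then climb back to $1$ with $l'(1)=0$; for a general positive $\sigma$ nothing in $l''=(l^3-l)/\sigma$ forbids such oscillation, since the associated energy $\tfrac12\sigma(l')^2+\tfrac12 l^2-\tfrac14 l^4$ is not conserved. This is exactly why the paper does not use plain homotopy invariance to produce a single zero of $J_1$: it invokes the Leray--Schauder continuation theorem to obtain a compact \emph{connected} set $\Omega_*$ of zeros joining $p=0$ to $p=1$, and then shows that the subset of $\Omega_*$ on which $l'>0$ is nonempty (at $p=0$, via property~3 of $\sigma_0$ and the reflection symmetry), open and closed in $\Omega_*$. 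Monotonicity is thus propagated along the homotopy rather than proved for an arbitrary solution at $p=1$. Your remaining points (the symmetry $l^{0,\alpha}_+(r)=-l^{0,\alpha}_-(-r)$ for odd $E$, the bound $|l|\le1$, the positivity of $\hat\alpha,\hat\beta$, and the parity/smoothness induction borrowed from the proof of Theorem~\ref{so2existence}) are consistent with the paper.
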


\begin{proof}
Denote
\begin{align*}
\Omega=[0,\alpha_{\max}]\times[0,\beta_{\max}],
\end{align*}
where $\alpha_{\max}$ and $\beta_{\max}$ are given by Lemma~\ref{MVA}. For every $p\in[0,1]$, the function $J_p$ is continuous on~$\Omega$.  It has no zeros in $\partial\Omega$ by Lemma~\ref{MVA}. Let us find the winding number of $J_0$ around the origin as we follow~$\partial\Omega$ anticlockwise starting at the origin; see Figure~1.

\begin{figure}
\centering
\includegraphics[width=140mm]{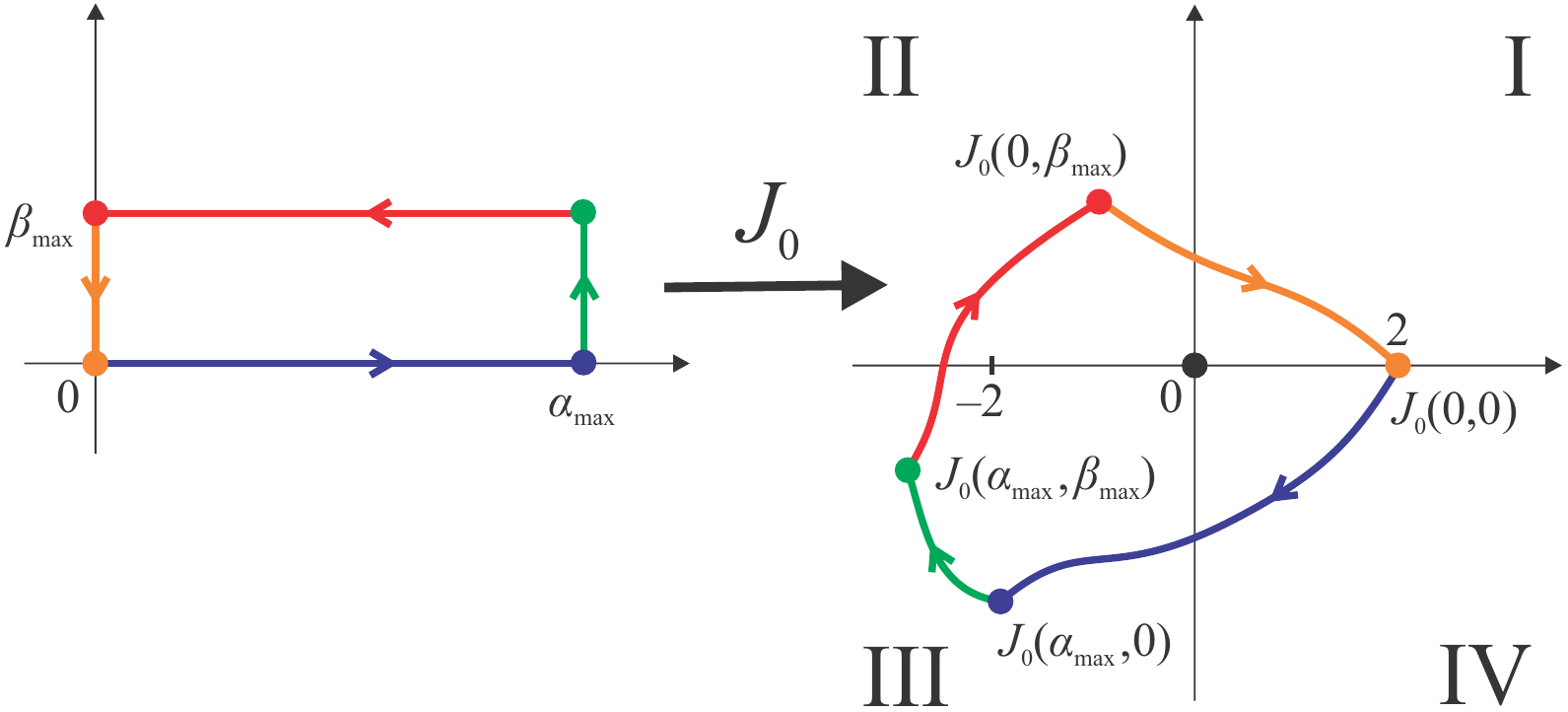}
\caption{The curve $J_0(\partial\Omega)$}
\end{figure}

Since $l^{0,0}_+$ and $l^{0,0}_-$ are constant, $J_{0}(0,0)=(2,0)$. Property~3 from Lemma~\ref{so3specialcurvature} implies that $J_0((0,\alpha_{\max}]\times\{0\})$ lies below the horizontal axis. Without loss of generality, assume that $\min\{\alpha_{\max},\beta_{\max}\}>\alpha_0$. By Property~4 from Lemma~\ref{so3specialcurvature}, this means that $J_0(\alpha_{\max},0)$ is in the third quadrant. Continuing to argue in this way, we can easily show that 
\begin{align*}
J_0((\{\alpha_{\max}\}\times[0,\beta_{\max}])\cup([0,\alpha_{\max}]\times\{\beta_{\max}\}))\qquad\mbox{and}\qquad J_0(\{0\}\times[0,\beta_{\max}))
\end{align*}
lie to the left of the vertical axis and above the horizontal axis, respectively. This enables us to conclude that the winding number of $J_0(\partial\Omega)$ around the origin is~$-1$. Thus, $\deg(J_0,\Omega)\neq 0$. 

The Leray--Schauder continuation theorem (see, e.g.,~\cite[Theorem~2.2]{SDT}) implies the existence of a compact connected set $\Omega_*\subset[0,1]\times\Omega$ such that $J_{p}(\alpha,\beta)=0$ for all $(p,\alpha,\beta)\in\Omega_*$ and the intersections $\Omega_*\cap(\{0\}\times\Omega)$ and $\Omega_*\cap(\{1\}\times\Omega)$ are nonempty. Given $(p,\alpha,\beta)\in[0,1]\times\Omega$, consider the function
\begin{align*}
l^{p,\alpha,\beta}=
\begin{cases}
l^{p,\alpha}_-&\mbox{on}~[-1,0], \\ 
l^{p,\beta}_+&\mbox{on}~(0,1].
\end{cases}
\end{align*}
If $(p,\alpha,\beta)\in\Omega_*$, this function solves~\eqref{MEFL} on $(-1,1)$ subject to the conditions
\begin{align}\label{bc_both_sides_degree}
l^{p,\alpha,\beta}(-1)=-1,\qquad (l^{p,\alpha,\beta})'(-1)=0,\qquad l^{p,\alpha,\beta}(1)=1,\qquad (l^{p,\alpha,\beta})'(1)=0.
\end{align}
Choose $(\alpha_*,\beta_*)\in\Omega$ so that $(1,\alpha_*,\beta_*)\in\Omega_*$. We claim that the assertion of the lemma holds with~$l=l^{1,\alpha_*,\beta_*}$. It remains to verify that $l^{1,\alpha_*,\beta_*}$ satisfies conditions~1 and~2 of Lemma~\ref{lem_so3_smth}.

Arguing as in first two steps of the proof of Theorem~\ref{so2existence}, we can extend $l^{1,\alpha_*,\beta_*}$ to a smooth function on $(-3,3)$ that is even about $r=-1$ and about $r=1$. Thus, $l^{1,\alpha_*,\beta_*}$ satisfies condition~1. Let us show that $(l^{1,\alpha_*,\beta_*})'$ is positive. Denote
$$
\Omega_+=\{(p,\alpha,\beta)\in\Omega_*\,|\,(l^{p,\alpha,\beta})'>0~\mbox{on}~(-1,1)\}.
$$
Property~3 from Lemma~\ref{so3specialcurvature} implies that $\Omega_+\ne\emptyset$. We will prove that $\Omega_+$ is both open and closed in $\Omega_*$. The inequality $(l^{1,\alpha_*,\beta_*})'>0$ will follow immediately.

Consider a sequence $((p_n,\alpha_n,\beta_n))_{n=1}^\infty\in\Omega_*\setminus\Omega_+$. Then $(l^{p_n,\alpha_n,\beta_n})'(r_n)\le0$ for some  $(r_n)_{n=1}^\infty\subset(-1,1)$. Assume that
$$
\lim_{n\to\infty}(p_n,\alpha_n,\beta_n)=(p,\alpha,\beta)\in\Omega_*,\qquad \lim_{n\to\infty}r_n=r_0\in[-1,1].
$$
Using the ODE~\eqref{MEFL} and the boundary conditions~\eqref{bc_both_sides_degree}, one can easily show that either $r_0\in(-1,1)$ and $(l^{p,\alpha,\beta})'(r_0)\le0$ or $r_0=(-1)^i$ and $(-1)^i(l^{p,\alpha,\beta})''(r_0)\ge0$ for some $i\in\{0,1\}$. In both cases, $l^{p,\alpha,\beta}$ cannot lie in~$\Omega_+$. This means $\Omega_+$ is open in $\Omega_*$.

Assume that $(p,\alpha,\beta)$ lies in the boundary of $\Omega_+$ but not in $\Omega_+$ itself. In this case, $(l^{p,\alpha,\beta})'(r_0)=0$ for some $r_0\in(-1,1)$. If $l^{p,\alpha,\beta}(r_0)$ equals $-1$, 0 or 1, then $l^{p,\alpha,\beta}$ must be constant by uniqueness of solutions to ODEs, which is impossible. If $l^{p,\alpha,\beta}(r_0)$ takes some other value, then $r_0$ is a strict extremum point for $l^{p,\alpha,\beta}$ by the second derivative test. However, this is again impossible because $(l^{p,\alpha,\beta})'\ge0$ on~$(-1,1)$. Thus, $\Omega_+$ is closed in $\Omega_*$.
\end{proof}

We now obtain Theorem~\ref{so3exist} by combining Lemmas~\ref{lem_so3_smth} and~\ref{MET}.

\section{Non-Uniqueness}\label{sec_nonuniq}

Consider a cohomogeneity one action of $SO(3)$ on~$\mathbb S^3$ as in Section~\ref{SectionexistenceSO3}. Given a symmetric positive-definite $SO(3)$-invariant tensor field $Y$, we fix a $Y$-geodesic between the two singular orbits and identify the principal part of $\mathbb S^3$ with the product~$(-1,1)\times \mathbb{S}^2$. The reflection of $(-1,1)$ about the middle induces an action of $\mathbb Z_2$ on~$\mathbb S^3$, which we call the $\mathbb Z_2$-action associated with~$Y$. Let us state the main result of this section.

\begin{theorem}\label{thm_nonuniq}
There exists a symmetric positive-definite $SO(3)$-invariant tensor field $Y$ on~$\mathbb S^3$ such that the equation $X(g)=Y$ is satisfied by at least three distinct $SO(3)$-invariant Riemannian metrics~$g$, one of which (and only one) is invariant under the $\mathbb Z_2$-action associated with~$Y$.
\end{theorem}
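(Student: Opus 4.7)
The plan is to exploit the $\mathbb Z_2$-symmetry structure already present in the problem. Restrict attention to $SO(3)$-invariant $Y$ for which $y$ (hence $\sigma=y^2$) is even in $r$, so that $Y$ is $\mathbb Z_2$-invariant. By Lemma~\ref{lem_so3_smth}, $SO(3)$-invariant metrics $g$ with $X(g)=Y$ correspond bijectively to solutions $l:[-1,1]\to\mathbb R$ of $l''=(l^3-l)/\sigma$ with $l(\pm1)=\pm1$ and the parity/positivity/smoothness conditions listed there. The ODE is invariant under the involution $\iota(l)(r):=-l(-r)$; moreover, by uniqueness of the initial-value problem at $r=0$, a solution with $l(0)=0$ is automatically odd. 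A short check using the reconstruction $h=(1-l^2)/\sigma$, $f=l'$ shows that distinct $l$ yield distinct metrics and that $l$ is odd if and only if the corresponding $g$ is $\mathbb Z_2$-invariant. It therefore suffices to exhibit an even $\sigma$ admitting exactly one odd solution $l_{\mathrm{sym}}$ together with at least one non-odd solution $l_{\mathrm{asym}}$; then $l_{\mathrm{sym}}$, $l_{\mathrm{asym}}$ and $\iota(l_{\mathrm{asym}})$ supply three distinct metrics, exactly one of which is $\mathbb Z_2$-invariant.

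Existence of the odd solution $l_{\mathrm{sym}}$ for any even $\sigma$ of the allowed form should follow by restricting the shooting framework of Section~\ref{so3existencesection} to the symmetric sector. Specifically, shoot from $r=0$ with initial data $(l(0),l'(0))=(0,c)$, $c>0$, and look for $c$ such that the trajectory reaches $(1,0)$ at $r=1$ with the required higher-order smoothness. Existence of such $c$ follows by an intermediate-value/Brouwer-degree argument analogous to Lemmas~\ref{MVA} and~\ref{MET}, now one-dimensional thanks to the odd-parity constraint. The ``exactly one'' uniqueness clause will then come from arranging transversality of this scalar shooting map at $l_{\mathrm{sym}}$, a generic condition on $\sigma$.

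The heart of the argument is the production of the asymmetric solution, which I would obtain by a bifurcation-of-symmetry argument. Starting from a pair $(\sigma_0,l_0)$ with $l_0$ odd, linearize the cross-curvature equation at $l_0$ to obtain the Jacobi-type operator
\[
\mathcal L\,\eta=\sigma\,\eta''-(3l_0^2-1)\,\eta,
\]
subject to the homogeneous boundary and parity conditions inherited from those of $l$. Since $\sigma$ and $3l_0^2-1$ are even, $\mathcal L$ preserves the decomposition of admissible $\eta$ into odd and even parts, and asymmetric perturbations of $l_0$ lie in the even sector. Consider a one-parameter family $\sigma_t$ of even profiles with associated odd solutions $l_t$; the corresponding family of operators $\mathcal L_t$ has a well-defined principal eigenvalue $\lambda_{\mathrm{even}}(t)$ on its even sector. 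If $\lambda_{\mathrm{even}}$ changes sign across the family, then at some crossing value $t_*$ one has $\ker\mathcal L_{t_*}\neq 0$ in the even sector, and the $\mathbb Z_2$-equivariant Crandall--Rabinowitz theorem produces a pitchfork bifurcation of $\iota$-conjugate non-odd solutions $l_t^{\pm}$ on one side of $t_*$. Combined with the persisting odd solution $l_t$, the tensor $Y_t$ corresponding to $\sigma_t$ at such $t$ admits the three desired metrics.

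The main obstacle is exhibiting a concrete family $\sigma_t$ along which $\lambda_{\mathrm{even}}$ changes sign. I would handle this by a two-regime comparison. At one end, take $\sigma_t$ close to a round-type profile (e.g.\ $y$ proportional to $\cos(\pi r/2)$, suitably rescaled to satisfy \eqref{BC_y}), where $l_t$ and the spectrum of $\mathcal L_t$ can be computed more or less explicitly using the large symmetry group and the sign of $\lambda_{\mathrm{even}}$ determined. At the other end, choose $\sigma_t$ sharply concentrated near zero on a small subinterval around $r=0$, so that $1/\sigma_t$ is very large there; a Sturm-type oscillation comparison with a constant-coefficient harmonic oscillator on the region where $3l_t^2-1<0$ forces $\lambda_{\mathrm{even}}$ to take the opposite sign. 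Continuity in $t$ then yields the required crossing. Uniqueness of the odd solution for $Y_t$ near the crossing follows from the implicit function theorem applied to the scalar odd-shooting map, whose linearization stays invertible because the kernel at $t_*$ is confined to the even sector.
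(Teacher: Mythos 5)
Your reduction to the ODE, the symmetry bookkeeping (odd $l\leftrightarrow\mathbb Z_2$-invariant $g$, the involution $\iota$, and the observation that $l_{\mathrm{sym}}$, $l_{\mathrm{asym}}$, $\iota(l_{\mathrm{asym}})$ furnish the three required metrics) are correct and coincide with the paper's Lemma~\ref{lem_nonuniq_ODE}; note also that the ``exactly one odd solution'' clause you try to arrange is not needed, since the theorem only asserts that exactly one of the three exhibited metrics is $\mathbb Z_2$-invariant. Your route to the asymmetric solution, however, is genuinely different from the paper's and has a gap at its crucial step. The paper does not bifurcate off a symmetric branch: it glues, at $r=\pm\tfrac78$, a ``small-derivative'' solution $l_{\mathrm{sm}}$ of the model problem $\sigma=(r+1)^2$ to a reflected ``large-derivative'' solution $l_{\mathrm{lg}}$, using a connectedness/degree argument (Lemmas~\ref{lem_deg_nonuniq} and~\ref{lem_choose_sigma1}) to match the derivatives at the gluing point, and then produces the odd solution separately by a shooting supremum.

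The gap is the mechanism by which $\lambda_{\mathrm{even}}$ is supposed to change sign. To make the potential $(3l_t^2-1)/\sigma_t$ very negative you concentrate $1/\sigma_t$ near $r=0$; but near $r=0$ the odd solution satisfies $l_t\approx0$, so the nonlinear equation itself becomes $l''\approx-l/\sigma_t$, a stiff oscillator: for $\sigma_t$ roughly constant and small near $0$, the trajectory through $(0,0)$ with slope $c$ behaves like $c\sqrt{\sigma_t}\,\sin(r/\sqrt{\sigma_t})$ and is not monotone. More generally, wherever $3l^2-1<0$ the term $(l^3-l)/\sigma$ either over-accelerates or over-decelerates $l$ as $\sigma\to0$, so exactly in the regime where your Sturm comparison produces negative eigenvalues the odd solution ceases to satisfy $l'>0$ and no longer corresponds to a metric (Lemma~\ref{lem_so3_smth} requires $f=l'>0$). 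It is therefore not established that an even-sector crossing occurs while the branch is still admissible. (The paper's analogous manoeuvre shrinks $\sigma$ only on a window where $l$ is bounded away from $0$ and $\pm1$, so that $l^3-l$ has a fixed sign and the solution is accelerated rather than made oscillatory.) Two further steps are asserted but not supplied: a connected branch $t\mapsto l_t$ of admissible odd solutions along which $\lambda_{\mathrm{even}}(t)$ is well defined (odd solutions need not be unique or vary continuously in $\sigma_t$), and the transversality hypothesis of Crandall--Rabinowitz at $t_*$, which a mere sign change of $\lambda_{\mathrm{even}}$ does not provide --- though the variational structure of $l''=(l^3-l)/\sigma$ could plausibly replace it by a Morse-index bifurcation argument. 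As written, the core analytic content of the theorem --- actually exhibiting a $\sigma$ admitting a non-odd solution --- is not established.
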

This result refutes R.~Hamilton's conjecture on the uniqueness of solutions to the prescribed cross curvature problem; see~\cite[Chapter 2]{IG08}. In the course of the proof, we will construct three metrics satisfying $X(g)=Y$. It will be clear from our arguments that two of them (the ones that are not $\mathbb Z_2$-invariant) are isometric. We discuss this in greater detail at the end of the section.

Lemma~\ref{lem_so3_smth} reduces the proof of Theorem~\ref{thm_nonuniq} to the analysis of the ODE~\eqref{MEFL'}. More precisely, it suffices to establish the following.

\begin{lemma}\label{lem_nonuniq_ODE}
There exist a function $\sigma$ and distinct functions $l_-$, $l_+$ and $l_0$ on $[-1,1]$ such that the following statements hold:
\begin{enumerate}
\item
The equality $\sigma(r)=(r+1)^2$ is satisfied for $r\in\left[-1,-\frac78\right]$, and $\sigma$ is even about $r=0$.

\item
Each of the functions $l_-$, $l_+$ and $l_0$ solves the boundary-value problem~(\ref{MEFL'})--(\ref{SCSO3}) and possesses properties~1 and~2 from Lemma~\ref{lem_so3_smth}.

\item
The function $l_0$ is odd about $r=0$, while $l_-$ and $l_+$ are not.
\end{enumerate}
\end{lemma}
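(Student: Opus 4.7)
The plan is to exploit the $\mathbb Z_2$-symmetry that an even choice of $\sigma$ introduces into the ODE~\eqref{MEFL'}. When $\sigma(-r)=\sigma(r)$, the involution $l(r)\mapsto -l(-r)$ preserves the equation, the boundary conditions~\eqref{SCSO3}, the monotonicity requirement $l'>0$, and the endpoint second-derivative conditions; its fixed points are precisely the solutions odd about $r=0$, while non-fixed solutions come in $\mathbb Z_2$-pairs. It therefore suffices to exhibit a single even $\sigma$ for which problem~\eqref{MEFL'}--\eqref{SCSO3} admits both an odd solution $l_0$ and a solution $l_-$ that is not odd; taking $l_+(r)=-l_-(-r)$ then produces the third required solution, automatically distinct from $l_-$ and from $l_0$.

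Existence of $l_0$ for any admissible even $\sigma$ can be obtained by restricting the shooting-and-degree scheme of Subsection~\ref{so3existencesection} to the odd sector. If one uses the same shooting parameter $\alpha=l''(-1)=-l''(1)$ from both endpoints, the evenness of $\sigma$ guarantees that the left- and right-shooting solutions are related by $l_+^\alpha(r)=-l_-^\alpha(-r)$, so their derivatives at $r=0$ match automatically and only the scalar condition $l_-^\alpha(0)=0$ remains. Lemma~\ref{MVA} furnishes a compact interval of admissible $\alpha$, and an intermediate-value argument along the lines of the proof of Lemma~\ref{MET} yields such an $\alpha$. The extension and regularity of $l_0$ are inherited from the proof of Theorem~\ref{so3exist}, and the oddness then follows from uniqueness of solutions to the initial-value problem together with the matching at $r=0$.

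For the non-odd solution, I would tune $\sigma$ so that the left-shooting map $\alpha\mapsto\Phi(\alpha)=(l_-^\alpha(0),(l_-^\alpha)'(0))$ and its symmetric counterpart $\Phi^\ast(\beta)=(-l_-^\beta(0),(l_-^\beta)'(0))$ coming from the right intersect in at least three points of the admissible parameter region. One intersection occurs at $\alpha=\beta$ (the odd $l_0$); two further intersections at $\alpha\neq\beta$ yield a $\mathbb Z_2$-pair of non-odd solutions. To force the extra crossings, I would construct $\sigma$ via a variant of Lemma~\ref{so3specialcurvature}: prescribe $\sigma(r)=(r+1)^2$ on $\bigl[-1,-\tfrac78\bigr]$ and $\sigma(r)=(1-r)^2$ on $\bigl[\tfrac78,1\bigr]$, insert a narrow, deep well pair symmetric about $r=0$ in the principal interval, and smoothly interpolate. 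The wells cause $l_-^\alpha$ to undergo rapid, regime-dependent transitions as $\alpha$ varies, making $\alpha\mapsto l_-^\alpha(0)$ non-monotone; an intermediate-value or small-scale Brouwer degree argument then produces the extra matchings.

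The main obstacle is this last quantitative step, because the existence proof of Section~\ref{so3existencesection} only accounts for the net count of matchings up to sign and therefore permits but does not force additional solutions. Producing the extra matchings requires delicate control of the shooting trajectory in the presence of the wells, along lines analogous to but finer than Steps~4--6 of the proof of Lemma~\ref{so3specialcurvature}: one must show that the shooting trajectory emerges from the wells with $(l,l')$-values at $r=0$ that vary sensitively enough with $\alpha$ to cross the reflected curve $\Phi^\ast$ more than once. Once $l_-$ is in hand, the $\mathbb Z_2$-symmetry produces $l_+$, and properties~1--2 of Lemma~\ref{lem_so3_smth} for all three solutions follow from the same regularity bootstrap and maximum-principle arguments used in the proof of Theorem~\ref{so3exist}.
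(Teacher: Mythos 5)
Your structural observations are sound and match the paper's: for even $\sigma$ the involution $l(r)\mapsto-l(-r)$ preserves problem~\eqref{MEFL'}--\eqref{SCSO3}, its fixed points are the odd solutions, and non-fixed solutions come in pairs, so it suffices to produce one odd solution and one non-odd solution. Your construction of $l_0$ by shooting in the odd sector is also essentially what the paper does (it characterises the right shooting parameter as an extremal value $\alpha^*$ rather than by a bare intermediate-value argument, but the content is the same). The problem is the non-odd solution: you yourself flag that the "main obstacle" — forcing the left-shooting curve $\Phi$ and its reflection $\Phi^\ast$ to cross at some $\alpha\neq\beta$ — is not resolved, and the mechanism you propose (a symmetric pair of deep wells making $\alpha\mapsto l_-^\alpha(0)$ non-monotone) is not substantiated by any estimate. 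This is a genuine gap, and it is precisely the step where all the work of Section~\ref{sec_nonuniq} is concentrated: a degree or continuation argument only controls the algebraic count of matchings, so without a quantitative mechanism nothing prevents the two curves from meeting only at the odd solution.

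The paper's resolution is worth contrasting with your plan, because it sidesteps the two-dimensional intersection problem entirely. Instead of matching left and right shooting data at $r=0$, it glues two \emph{different} solutions at the interior point $r=-\tfrac78$ where both vanish: a "small" solution $l_{\rm{sm}}$ of \eqref{MEFL'} on $\bigl[-1,-\tfrac78\bigr]$ reaching $0$ at $-\tfrac78$ with derivative $l^*\ge 8$ there, and a reflected "large" solution $-l_{\rm{lg}}(-r)$, where $l_{\rm{lg}}$ solves the equation on $\bigl[-1,\tfrac78\bigr]$ and first vanishes at $\tfrac78$. Continuity of the glued function is automatic; the derivative match $(l_{\rm{lg}})'\bigl(\tfrac78\bigr)=l^*$ is achieved by deforming $\sigma$ along a path $\sigma_p=(1-p)\sigma_0+p\sigma_1$, arranging $(l^{0,\alpha})'\bigl(\tfrac78\bigr)<\tfrac57$ via property~5 of Lemma~\ref{so3specialcurvature} and $(l^{1,\alpha})'\bigl(\tfrac78\bigr)>l^*$ via the choice of $\sigma_1$ in Lemma~\ref{lem_choose_sigma1}, and then invoking the connectedness of the Leray--Schauder continuum $\Theta$ from Lemma~\ref{lem_deg_nonuniq} to find an intermediate $p_*$ where equality holds. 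Note in particular that $\sigma$ is not fixed in advance and then perturbed, as in your plan; it is itself the output of a one-parameter intermediate-value argument. If you want to complete your own route, you would need an analogous one-parameter family along which some scalar invariant of the shooting map provably changes sign off the diagonal $\alpha=\beta$ — the crossing will not come for free from inserting wells.
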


Let us prove Lemma~\ref{lem_nonuniq_ODE}. In line with statement~1, set $\sigma(r)=(r+1)^2$ for $r\in\left[-1,-\frac78\right]$. Denote by $l_{\rm{sm}}:\big[\!-1,-\tfrac{7}{8}\big]\to \mathbb{R}$ the solution to equation~\eqref{MEFL'} satisfying
\begin{align*}
l_{\rm{sm}}(-1)=-1,\qquad l_{\rm{sm}}\left(-\tfrac{7}{8}\right)=0,\qquad l_{\rm{sm}}'(-1)=0.
\end{align*}
One finds such a solution by re-scaling the function $\tilde l$ introduced in Section~\ref{SectionexistenceSO3}. Clearly, $l_{\rm{sm}}'>0$ on $(-1,-\tfrac{7}{8}\big]$. The ODE for $l_{\rm{sm}}$ implies that $l_{\rm{sm}}''\ge0$, which means the derivative 
$$l^*=l_{\rm{sm}}'\left(-\tfrac{7}{8}\right)$$
is greater than or equal to~8. Our plan is to construct $l_-$ by extending $l_{\rm{sm}}$ to all of $[-1,1]$ in a specific way. To do so, we resort to degree theory again.

Let $\sigma_1$ be a smooth positive even function on $(-1,1)$ equal to $(1+r)^2$ on $\big(-1,-\frac{7}{8}\big]$. For $p\in[0,1]$, define
\begin{align*}
\sigma_p=(1-p)\sigma_0+p\sigma_1,
\end{align*}
where $\sigma_0$ is constructed in Section~\ref{subsec_sp_curv}. 
Lemma~\ref{LocalExistence} and the global Lipschitz continuity of $E$ imply the existence of a unique solution on the interval $\big[-1,\tfrac78\big]$ to the initial-value problem
\begin{align}\label{equationsso3nonuniqueness}
l''&=\frac{E(l)}{\sigma_p},\qquad
l(-1)=-1, \qquad l'(-1)=0, \qquad l''(-1)=\alpha.
\end{align}
We denote this solution $l^{p,\alpha}$ and introduce the map
\begin{align*}
\mathbb R\ni\alpha\to K_p(\alpha)=l^{p,\alpha}\big(\tfrac78\big)\in\mathbb R.
\end{align*}
Lemma~\ref{so3specialcurvature} (specifically, property~4) enables us to choose $\alpha_{\rm{mx}}>0$ (independent of $p$) so that $l^{p,\alpha}\big(\tfrac78\big)>l^{p,\alpha}\big(-\tfrac78\big)>1$ when $\alpha\ge\alpha_{\rm{mx}}$. To produce an extension of the function $l_{\rm{sm}}$, we will exploit some degree-theoretic properties of~$K_p$ in~$[0,\alpha_{\rm{mx}}]$.

\begin{lemma}\label{lem_deg_nonuniq}
There exists a compact connected set $\Theta\subset[0,1]\times[0,\alpha_{\rm{mx}}]$ such that $K_{p}(\alpha)=0$ for all $(p,\alpha)\in\Theta$ and the intersections $\Theta\cap(\{0\}\times[0,\alpha_{\rm{mx}}])$ and $\Theta\cap(\{1\}\times[0,\alpha_{\rm{mx}}])$ are nonempty.
\end{lemma}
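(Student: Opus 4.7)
My plan is to mimic the Leray--Schauder continuation argument already used in Lemma~\ref{MET}. Specifically, I would view $K:[0,1]\times[0,\alpha_{\rm{mx}}]\to\mathbb{R}$ as a homotopy between $K_0$ and $K_1$, check that this map is continuous, that its zeros stay away from the $\alpha$-boundary $\{0,\alpha_{\rm{mx}}\}$ uniformly in $p$, and that the Brouwer degree $\deg(K_0,(0,\alpha_{\rm{mx}}),0)$ is nonzero. The Leray--Schauder continuation theorem cited in the proof of Lemma~\ref{MET} will then produce the required compact connected component $\Theta$ meeting both slices $\{0\}\times[0,\alpha_{\rm{mx}}]$ and $\{1\}\times[0,\alpha_{\rm{mx}}]$.

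Continuity of $K_p(\alpha)=l^{p,\alpha}(\tfrac78)$ on $[0,1]\times[0,\alpha_{\rm{mx}}]$ is routine: since $E$ is globally Lipschitz on $\mathbb{R}$ and both $\sigma_p$ and the initial data in~\eqref{equationsso3nonuniqueness} depend smoothly on $p$ and $\alpha$, the short-time existence and continuity statement of Lemma~\ref{LocalExistence} propagates to the entire interval $[-1,\tfrac78]$ by a standard continuation/Gr\"onwall argument. For the endpoint evaluations, I would first observe that at $\alpha=0$ the initial data $l(-1)=-1$, $l'(-1)=l''(-1)=0$ together with the identity $E(-1)=0$ force $l\equiv -1$ by uniqueness of solutions to~\eqref{equationsso3nonuniqueness}, so $K_p(0)=-1<0$ for every~$p$. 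Next, $\alpha_{\rm{mx}}$ was chosen (via property~4 of Lemma~\ref{so3specialcurvature}) so that $l^{p,\alpha_{\rm{mx}}}(\tfrac78)>l^{p,\alpha_{\rm{mx}}}(-\tfrac78)>1$ for every~$p$, giving $K_p(\alpha_{\rm{mx}})>1>0$.

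Consequently, for each $p\in[0,1]$ the continuous function $\alpha\mapsto K_p(\alpha)$ is strictly negative at $\alpha=0$ and strictly positive at $\alpha=\alpha_{\rm{mx}}$, and hence nonzero on $\partial(0,\alpha_{\rm{mx}})$. Its one-dimensional Brouwer degree satisfies
\[
\deg\bigl(K_0,(0,\alpha_{\rm{mx}}),0\bigr)=\tfrac12\bigl(\sign K_0(\alpha_{\rm{mx}})-\sign K_0(0)\bigr)=1\neq 0.
\]
Applying the Leray--Schauder continuation theorem \cite[Theorem~2.2]{SDT} to the compactly contained open set $(0,\alpha_{\rm{mx}})$ with homotopy $(p,\alpha)\mapsto K_p(\alpha)$ then delivers the required continuum~$\Theta$. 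The only step that demands any attention is verifying continuity of $K_p$ across the full parameter rectangle, and this reduces to the global Lipschitz bound on~$E$; otherwise the argument parallels the one carried out for $J_p$ in the proof of Lemma~\ref{MET} almost verbatim.
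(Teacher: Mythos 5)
Your proposal is correct and follows essentially the same route as the paper: both apply the Leray--Schauder continuation theorem to $K_p$ on $[0,\alpha_{\rm{mx}}]$, using $K_p(0)=-1<0$ (the constant solution $l^{p,0}\equiv-1$) and $K_p(\alpha_{\rm{mx}})>1>0$ (from property~4 of Lemma~\ref{so3specialcurvature}) to exclude boundary zeros and to compute $\deg(K_0,[0,\alpha_{\rm{mx}}])=1$. The only difference is that you spell out the continuity of $(p,\alpha)\mapsto K_p(\alpha)$ and the uniform sign at $\alpha=0$, details the paper leaves implicit.
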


\begin{proof}
The assertion of the lemma follows form the Leray--Schauder continuation theorem (see, e.g.,~\cite[Theorem 2.2]{SDT}) applied to $K_p$ on $[0,\alpha_{\rm{mx}}]$. Indeed, our choice of $\alpha_{\rm{mx}}$ ensures that $K_p$ does not have any zeros on the boundary of this interval. It remains to verify that $\deg(K_0,[0,\alpha_{\rm{mx}}])\ne0$. Since
$$
K_0(0)=-1<0\qquad\mbox{and}\qquad K_0(\alpha_{\rm{mx}})=l^{0,\alpha_{\rm{mx}}}\big(\tfrac78\big)>1>0,
$$
we have $\deg(K_0,[0,\alpha_{\rm{mx}}])=1$.
\end{proof}

As in the proof of Lemma~\ref{MET}, one can easily show that, whenever $(p,\alpha)\in\Theta$, the function $l^{p,\alpha}$ has positive derivative on $\left(-1,\tfrac78\right]$ and extends to a smooth function on $\left[-\frac{23}8,\tfrac78\right]$ that is even about $r=-1$.

\begin{lemma}\label{lem_choose_sigma1}
It is possible to choose the function $\sigma_1$ above so that
$$
(l^{1,\alpha})'\big(\tfrac78\big)>l_{\rm{sm}}'\left(-\tfrac{7}{8}\right)=l^*
$$
for all $\alpha$ satisfying $(1,\alpha)\in\Theta$.
\end{lemma}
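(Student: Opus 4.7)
The plan is to pick $\sigma_1$ very small on the interior of $(-7/8,7/8)$ and exploit an energy identity for the ODE $l''=(l^3-l)/\sigma_1$. Multiplying by $l'$ and integrating from $-7/8$ to $7/8$, the constraint $l^{1,\alpha}(7/8)=0$ gives
\[
(l^{1,\alpha})'(7/8)^2 \;=\; B(\alpha)^2 \;+\; 2\int_{-7/8}^{7/8}\frac{l'(l^3-l)}{\sigma_1}\,dr,\qquad B(\alpha):=(l^{1,\alpha})'(-7/8).
\]
Because $\sigma_1(r)=(1+r)^2$ on $[-1,-7/8]$ irrespective of our choice, the scaling argument from Section~\ref{subsec_sp_curv} gives $l^{1,\alpha}(r)=\tilde l(\sqrt\alpha(r+1)-1)$ on this interval. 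Consequently, $A(\alpha):=\tilde l(\sqrt\alpha/8-1)=l^{1,\alpha}(-7/8)$ and $B(\alpha)=\sqrt\alpha\,\tilde l'(\sqrt\alpha/8-1)$ depend only on $\alpha$ and are insensitive to the choice of $\sigma_1$; note $A(\alpha_{\rm sm})=0$ and $B(\alpha_{\rm sm})=l^*$ for the specific value $\alpha_{\rm sm}=l_{\rm sm}''(-1)$.

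The crux of the argument is to show that one can choose $\sigma_1$ so that every $\alpha\in\Theta_1:=\{\alpha:(1,\alpha)\in\Theta\}$ satisfies $A(\alpha)\le-\kappa$ for some fixed $\kappa>0$, and the associated $l^{1,\alpha}$ is strictly increasing on $[-7/8,7/8]$. Strict monotonicity can be propagated along the Leray--Schauder continuum $\Theta$ by an openness--closedness argument analogous to the final step of the proof of Lemma~\ref{MET}, using properties~3 and~5 of $\sigma_0$ (Lemma~\ref{so3specialcurvature}) as the base case at $p=0$. Monotonicity together with the constraint $l(7/8)=0$ forces $A(\alpha)<0$. The uniform gap $A(\alpha)\le-\kappa$ reduces to showing that $\alpha_{\rm sm}$ sits at positive distance from $\Theta_1$; I would arrange this by picking $\sigma_1$ on $[-7/8,7/8]$ such that the symmetric trajectory starting with $l(-7/8)=0,\ l'(-7/8)=l^*$ does not return to zero at $r=7/8$, which can be read off from the energy formula $(l')^2=(l^*)^2+2W(l)/\sigma_1$ with $W(u)=u^4/4-u^2/2$.

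Granted strict monotonicity and the sign bound on $A$, the change of variable $u=l^{1,\alpha}(r)$ converts the integral into
\[
2\int_{A(\alpha)}^{0}\frac{u^3-u}{\sigma_1(r(u))}\,du\;\ge\;\frac{-2W(A(\alpha))}{\max_{[-7/8,7/8]}\sigma_1}\;=\;\frac{A(\alpha)^2\bigl(2-A(\alpha)^2\bigr)}{2\,\max_{[-7/8,7/8]}\sigma_1}\;\ge\;\frac{\kappa^2(2-\kappa^2)}{2\,\max_{[-7/8,7/8]}\sigma_1}.
\]
Taking $\max_{[-7/8,7/8]}\sigma_1$ sufficiently small (while respecting the constraint from the previous paragraph) forces the right-hand side to exceed $(l^*)^2-B(\alpha)^2$ uniformly in $\alpha\in\Theta_1$, giving $(l^{1,\alpha})'(7/8)>l^*$.

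The hard part is the second paragraph: the two-sided calibration of $\sigma_1$ on the interior, at once large enough that the symmetric trajectory $l^{1,\alpha_{\rm sm}}$ cannot oscillate back to zero at $r=7/8$ (so that $\alpha_{\rm sm}\notin\Theta_1$ with a uniform gap $\kappa$), and yet small enough that the energy boost $\kappa^2(2-\kappa^2)/(2\max\sigma_1)$ dominates the deficit $(l^*)^2-B(\alpha)^2$ uniformly along~$\Theta_1$.
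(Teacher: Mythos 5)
Your energy identity, the scaling formulas for $A(\alpha)$ and $B(\alpha)$, and the change of variables are all correct, and the route itself (boosting $(l^{1,\alpha})'(\tfrac78)^2$ by the potential drop from $A(\alpha)$ to $0$ divided by $\max\sigma_1$) is genuinely different from the paper's. But the step you flag as ``the hard part'' is a real gap, and the mechanism you propose does not close it. Your $\kappa$ is extracted from $\Theta_1$ by compactness, while $\Theta_1$ is itself built from $\sigma_1$ (through the homotopy $\sigma_p=(1-p)\sigma_0+p\sigma_1$ and the Leray--Schauder continuum), so the final instruction ``take $\max_{[-7/8,7/8]}\sigma_1$ sufficiently small'' is circular: each new $\sigma_1$ produces a new $\Theta_1$ and a new $\kappa$, and you give no reason why $\kappa(\sigma_1)^2\big(2-\kappa(\sigma_1)^2\big)/\max\sigma_1$ ever exceeds $(l^*)^2$. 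The device you offer for producing the gap --- tuning $\sigma_1$ so that the trajectory with data $l(-\tfrac78)=0$, $l'(-\tfrac78)=l^*$ does not return to zero --- only shows $\alpha_{\mathrm{sm}}\notin\Theta_1$, which is automatic anyway: the paper has already established that $(l^{1,\alpha})'>0$ on $(-1,\tfrac78]$ for $(1,\alpha)\in\Theta$, which is incompatible with $l(-\tfrac78)=l(\tfrac78)=0$. That exclusion gives a $\kappa>0$ for each \emph{fixed} $\sigma_1$ but no bound that survives shrinking $\sigma_1$; worse, it pushes $\sigma_1$ in the opposite direction to the one your last step requires.

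What is missing is a lower bound on $|l^{1,\alpha}(-\tfrac78)|$ that is uniform over all admissible small $\sigma_1$, and this is exactly what the paper's argument supplies, in a way that depends only on $l^*$ and therefore avoids the circularity. With $\delta=\min\{\tfrac1{2(l^*+1)},\tfrac7{16}\}$ fixed in advance, an increasing, nonpositive, convex solution with $l(\tfrac78)=0$ cannot already exceed $-\tfrac{8\delta}{15}$ at $r=\tfrac78-\delta$ (convexity from $r=-1$ would force $l'(\tfrac78-\delta)>\tfrac8{15}$ and hence $l(\tfrac78)>0$), and it cannot sit in the band $[-1+\delta,-\tfrac{8\delta}{15}]$ throughout the window $[\tfrac78-2\delta,\tfrac78-\delta]$ once $\sigma_1<\tfrac{\delta^2\delta_0}{2}$ there, since the forcing $(l^3-l)/\sigma_1\ge\tfrac2{\delta^2}$ would move $l$ by more than $1$. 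The surviving alternative is $l(\tfrac78-2\delta)<-1+\delta$, and then convexity alone gives $(l^{1,\alpha})'(\tfrac78)>\tfrac{1-\delta}{2\delta}=l^*+\tfrac12$, with no reference to $\Theta_1$ and no energy identity. If you want to keep your energy route, insert that window argument: it hands you the uniform bound $A(\alpha)<-1+\delta$, i.e.\ $\kappa\ge1-\delta$ independent of $\sigma_1$, after which your final display does close the proof.
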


\begin{proof}
Set
\begin{align*}
\delta=\min\Big\{\frac1{2(l^*+1)},\frac7{16}\Big\}.
\end{align*}
Fix $\alpha$ such that $(1,\alpha)\in\Theta$. Since $l^{1,\alpha}$ has positive derivative on $\left(-1,\tfrac78\right)$ and $l^{1,\alpha}\big(\tfrac78\big)=K_{1}(\alpha)=0$, the values of $l^{1,\alpha}$ on $\left(-1,\tfrac78\right)$ lie between $-1$ and~$0$. The ODE in~\eqref{equationsso3nonuniqueness} implies that $l^{1,\alpha}$ is convex on this interval. We consider three cases.

Assume that $l^{1,\alpha}\big(\tfrac78-\delta\big)>-\frac{8\delta}{15}$. Then 
$$
(l^{1,\alpha})'(r)>(l^{1,\alpha})'\big(\tfrac78-\delta\big)>\tfrac8{15}
$$
for $r\in\big(\tfrac78-\delta,\tfrac78\big)$ by convexity. However, this means $l^{1,\alpha}\big(\tfrac78\big)>0$, which is false.

Assume that 
\begin{align}\label{case2so3nonunique}
    l^{1,\alpha}\big(\tfrac78-2\delta\big)\ge-1+\delta,\qquad l^{1,\alpha}\big(\tfrac78-\delta\big)\le-\tfrac{8\delta}{15}.
\end{align}
On the interval $\big[\tfrac78-2\delta,\tfrac78-\delta\big]$, we have
$$
(l^{1,\alpha})^3-l^{1,\alpha}>\delta_0
$$
for some $\delta_0>0$ depending only on~$\delta$. Choose $\sigma_1$ so that, on this interval, $\sigma_1<\frac{\delta^2\delta_0}{2}$. By the ODE in~\eqref{equationsso3nonuniqueness}, this means $(l^{1,\alpha})''\ge\frac{2}{\delta^2}$, which contradicts 
\eqref{case2so3nonunique}. 

Finally, assume that $l^{1,\alpha}\big(\tfrac78-2\delta\big)<-1+\delta$. Using convexity again, we find
$$
(l^{1,\alpha})'\big(\tfrac78\big)>\frac{l^{1,\alpha}\big(\tfrac78\big)-l^{1,\alpha}\big(\tfrac78-2\delta\big)}{2\delta}>\frac{1-\delta}{2\delta}\ge l^*.
$$
\end{proof}

Choose $\sigma_1$ as in Lemma~\ref{lem_choose_sigma1}. By property~5 from Lemma~\ref{so3specialcurvature},
$$
(l^{0,\alpha_0})'\big(\tfrac78\big)<\tfrac57<8\le l^*<(l^{1,\alpha_1})'\big(\tfrac78\big)
$$
whenever $\alpha_0$ and $\alpha_1$ satisfy $(0,\alpha_0)\in\Theta$ and $(1,\alpha_1)\in\Theta$. The compactness and the connectedness of $\Theta$ imply the existence of $(p_*,\alpha_*)\in\Theta$ such that $(l^{p_*,\alpha_*})'\big(\tfrac78\big)=l^*$. We set $\sigma=\sigma_{p_*}$. Using the notation $l_{\rm{lg}}=l^{p_*,\alpha_*}$, define
\begin{align*}
l_-(r)=
\begin{cases}
l_{\rm{sm}}(r) &\mbox{for}~r\in\big[\!-1,-\frac78\big], \\ 
-l_{\rm{lg}}(-r) &\mbox{for}~r\in\big(\!-\frac78,1\big],
\end{cases}
\qquad
l_+(r)=
\begin{cases}
l_{\rm{lg}}(r) &\mbox{for}~r\in\big[\!-1,\frac78\big], \\ 
-l_{\rm{sm}}(-r) &\mbox{for}~r\in\big(\frac78,1\big].
\end{cases}
\end{align*}
Clearly, these functions solve the boundary-value problem~\eqref{MEFL'}--\eqref{SCSO3} and have positive derivatives on $(-1,1)$. They are distinct, and neither of them is odd. Arguing as in the first two steps of the proof of Theorem~\ref{so2existence}, one easily shows that they possess property~1 from Lemma~\ref{lem_so3_smth}. Now that $\sigma$, $l_-$ and $l_+$ are at hand, it remains to construct~$l_0$.

\begin{proof}[Proof of Lemma~\ref{lem_nonuniq_ODE}]
Denote
\begin{align*}
\alpha^*=\sup\big\{\alpha\ge0\,\big|\,-1\le l^{p_*,\alpha}\le0~\mbox{and}~\big(l^{p_*,\alpha}\big)'\ge0~\mbox{on}~(-1,0)\big\}.
\end{align*}
One can easily check that $0<\alpha^*<\infty$ using Lemmas~\ref{so3specialcurvature} and the fact that $\alpha_*$ lies in the set on the right-hand side. Define
\begin{align*}
l_0(r)=
\begin{cases}
l^{p_*,\alpha^*}(r) &\mbox{for}~r\in\big[\!-1,0\big], \\ 
-l^{p_*,\alpha^*}(-r) &\mbox{for}~r\in\big(0,1\big].
\end{cases}
\end{align*}
Recall that $l^{p_*,\alpha}$ and $(l^{p_*,\alpha})'$ depend continuously on~$\alpha$. As a consequence, $-1\le l_0\le0$ and $l_0'\ge0$ on~$(-1,0)$. Moreover, $l_0''$ is nonnegative on~$(-1,0)$. Therefore, $l_0'$ is greater than~0 on~$(-1,0)$, as is the one-sided derivative of $l_0$ at $r=0$. The definition of $\alpha^*$ implies that $l_0(r)=0$ for some $r\in[-1,0]$. This is only possible if~$r=0$. We conclude that $l_0$ is continuous on $[-1,1]$ and has positive derivative on~$(-1,1)$. Moreover, this function solves~\eqref{MEFL'}--\eqref{SCSO3}. Arguing as in the proof of Theorem~\ref{so2existence}, one easily shows that it possesses property~1 from Lemma~\ref{lem_so3_smth}.
\end{proof}

Denote by $g_-$, $g_+$ and $g_0$ the metrics on $\mathbb S^3$ associated with $l_-$, $l_+$ and~$l_0$. Clearly, all three of them have cross curvature~$Y$. Moreover, $g_0$ is invariant under the $\mathbb Z_2$-action given by~$Y$, while $g_-$ and $g_+$ are not. This proves Theorem~\ref{thm_nonuniq}.

The reflection of the interval $(-1,1)$ about its centre induces an isometry between $g_-$ and~$g_+$. However, the following result holds.

\begin{theorem}\label{thm_isometry}
The metrics $g_-$ and $g_+$ are not isometric to~$g_0$.
\end{theorem}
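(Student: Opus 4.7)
The plan is to suppose $\phi:(\mathbb S^3,g_-)\to(\mathbb S^3,g_0)$ is an isometry and derive a contradiction; the case of $g_+$ is identical. The key observation is the naturality of the cross curvature under diffeomorphisms:
$$\phi^*Y=\phi^*X(g_0)=X(\phi^*g_0)=X(g_-)=Y,$$
so $\phi$ is simultaneously an isometry of the Riemannian metric $Y$ on $\mathbb S^3$.

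First I would show that, after composing with a suitable element of $SO(3)$, the map $\phi$ centralises the standard $SO(3)\subset\Diff(\mathbb S^3)$. To that end I would verify $\operatorname{Isom}^0(g_-)=\operatorname{Isom}^0(g_0)=SO(3)$: any compact connected Lie group acting effectively on $\mathbb S^3$ and strictly containing $SO(3)$ must be $SO(4)$, which would force the metric to be round, but neither $g_-$ nor $g_0$ is round because on a round $\mathbb S^3$ the cross curvature is a constant multiple of the metric, whereas our $\sigma$ is chosen to equal $(r+1)^2$ on $\big[\!-1,-\tfrac78\big]$---a function whose fourth derivative vanishes at $r=-1$, contrary to what a round metric would produce. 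Hence $\phi\,SO(3)\,\phi^{-1}$ is a connected $3$-dimensional Lie subgroup of $\operatorname{Isom}^0(g_0)=SO(3)$, so it equals $SO(3)$; the triviality of $\operatorname{Out}(SO(3))$ then allows me to multiply $\phi$ on the right by an element of $SO(3)\subset\operatorname{Isom}(g_-)$ so that the resulting isometry centralises $SO(3)$.

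Next, any diffeomorphism of $\mathbb S^3$ that centralises $SO(3)$ permutes the $SO(3)$-orbits, inducing a self-diffeomorphism $\tau:[-1,1]\to[-1,1]$ of the orbit space; on each principal orbit $\mathbb S^2=SO(3)/SO(2)$ it restricts to an $SO(3)$-equivariant self-map, which must be either the identity or the antipodal map, and continuity forces this choice to be independent of $r$. In our product coordinates this gives $\phi(r,\omega)=(\tau(r),\epsilon\omega)$ with $\epsilon\in\{\pm1\}$. The condition $\phi^*Y=Y$ applied to $Y=dr\otimes dr+y(r)^2Q$ forces $(\tau'(r))^2=1$ and $y(\tau(r))=y(r)$, which together with the endpoint constraint yields $\tau(r)=\pm r$---the minus sign being admissible because $\sigma=y^2$ is even (Lemma~\ref{lem_nonuniq_ODE}(1)) and $y>0$, so $y$ is even as well.

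Finally, I would invoke the $\mathbb Z_2$-invariance of $g_0$: since $l_0$ is odd (Lemma~\ref{lem_nonuniq_ODE}(3)), the functions $h_0$ and $f_0=l_0'$ are even in $r$, and $Q$ is invariant under $\omega\mapsto-\omega$, so every map $(r,\omega)\mapsto(\pm r,\pm\omega)$ preserves $g_0$. Consequently $\phi^*g_0=g_0$, whence $g_-=\phi^*g_0=g_0$, contradicting the fact that the metrics produced in Lemma~\ref{lem_nonuniq_ODE} are distinct. I expect the main obstacle to be the justification of $\operatorname{Isom}^0(g_-)=\operatorname{Isom}^0(g_0)=SO(3)$, since this is the only place where the specific construction of $Y$ and $g_{\pm},g_0$ enters the argument, and it requires carefully ruling out that $g_\pm$ or $g_0$ is round.
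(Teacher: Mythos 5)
Your argument is sound in outline but follows a genuinely different route from the paper. The paper never computes an isometry group: assuming $\iota^*g_0=g_+$, it conjugates the fixed $SO(3)$-action $\mathcal A$ by $\iota$ to obtain a second action $\tilde{\mathcal A}$ preserving $g_+$, computes the scalar curvature of a cohomogeneity one metric at the two singular orbits (namely $6/l''(-1)$ and $-6/l''(1)$), uses the uniqueness statement of Lemma~\ref{LocalExistence} together with $l_+\ne l_0$ to see that $S(g_+)$ takes different values at the singular points of $\mathcal A$ and at those of $\tilde{\mathcal A}$, and then derives a contradiction from a purely topological fact: the $\mathcal A$-orbit of some singular point of $\tilde{\mathcal A}$ must meet the $\tilde{\mathcal A}$-orbit of a singular point of $\mathcal A$. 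Your route instead pins down $\operatorname{Isom}^0(g_0)$, reduces any putative isometry to an equivariant product map $(r,\omega)\mapsto(\pm r,\pm\omega)$ via the naturality identity $\phi^*Y=Y$, and observes that all such maps preserve $g_0$; this yields a stronger conclusion (an explicit classification of the candidate isometries) at the price of more Lie-theoretic input.

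That input is exactly where your proposal has a gap: the assertion that a compact connected Lie group acting effectively on $\mathbb S^3$ and strictly containing $SO(3)$ must be $SO(4)$ is not immediate. Dimension $5$ is excluded by the classical gap theorem for isometry groups of $3$-manifolds, but dimension $4$ has to be ruled out by hand: a $4$-dimensional compact connected Lie group containing a copy of $SO(3)$ is, up to finite cover, $SO(3)\times SO(2)$ (neither $U(2)$ nor $SU(2)\times U(1)$ contains $SO(3)$, since $SO(3)$ has no nontrivial $2$-dimensional complex representation), and the circle factor, commuting with $SO(3)$, acts trivially on the orbit space $[-1,1]$ and hence, being connected, trivially on each principal orbit $\mathbb S^2$, contradicting effectiveness. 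With that inserted, and with your (correct) observation that $\sigma=(r+1)^2$ on $\left[-1,-\frac78\right]$ prevents $Y$ --- and therefore $g_0$ and $g_\pm$ --- from being round, the remaining steps go through. Note also that you only need $\operatorname{Isom}^0(g_0)=SO(3)$, not the corresponding statement for $g_-$: the connected group $\phi\,SO(3)\,\phi^{-1}$ automatically lies in $\operatorname{Isom}^0(g_0)$.
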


\begin{proof}
For the sake of contradiction, assume that there exists a diffeomorphism $\iota:\mathbb S^3\to\mathbb S^3$ such that $g_+=\iota^*g_0$. At the beginning of Section~\ref{sec_nonuniq}, we fixed a cohomogeneity one action of $SO(3)$ on~$\mathbb S^3$. Denoting it by $\mathcal A$, we introduce another action $\tilde{\mathcal A}$ of $SO(3)$ on~$\mathbb S^3$ via the formula
$$
\tilde{\mathcal A}(\kappa)=\iota^{-1}\circ\mathcal A(\kappa)\circ\iota,\qquad \kappa\in SO(3).
$$
Clearly, $g_+$ must be invariant with respect to both $\mathcal A$ and~$\tilde{\mathcal A}$. If the $\mathcal A$-orbit of one point in~$\mathbb S^3$ intersects the $\tilde{\mathcal A}$-orbit of another point, then the scalar curvature of $g_+$ must be the same at these two points. We will produce a counterexample to this statement. The resulting contradiction will prove Theorem~\ref{thm_isometry}.

The action $\mathcal A$ and our choice of a $Y$-geodesic $\gamma$ on $\mathbb S^3$ yield an identification between the principal part of $\mathbb S^3$ and the product $(-1,1)\times\mathbb S^2$. Let $s_{\rm{n}}=\gamma(-1)$ and $s_{\rm{s}}=\gamma(1)$ be the singular orbits of~$\mathcal A$. If an $\mathcal A$-invariant metric $g$ on $\mathbb S^3$ satisfies~\eqref{so3metric}, we can compute its scalar curvature as in the proof of Lemma~\ref{lem_so3_smth}. We find
\begin{align*}
S(g)=\frac{4\sigma}{1-l^2}-\frac{2(l^2-1)}{(l')^2},
\end{align*}
where $l$ and $\sigma$ are defined by~\eqref{lsigmaso3}. L'H\^opital's rule implies
\begin{align*}
S(g)(s_{\rm{n}})=\frac{6}{l''(-1)},\qquad S(g)(s_{\rm{s}})=-\frac{6}{l''(1)}. 
\end{align*}

The points
$$
\tilde s_{\rm{n}}=\iota^{-1}(s_{\rm{n}})\qquad \mbox{and}\qquad \tilde s_{\rm{s}}=\iota^{-1}(s_{\rm{s}})
$$
are the singular orbits of~$\tilde{\mathcal A}$. Since scalar curvature is preserved by isometries,
$$
S(g_+)(\tilde s_{\rm{n}})=S(g_0)(s_{\rm{n}})=\frac{6}{l_0''(-1)},\qquad S(g_+)(\tilde s_{\rm{s}})=S(g_0)(s_{\rm{s}})=-\frac{6}{l_0''(1)}.
$$
Lemma~\ref{LocalExistence} and the fact that $l_+$ and $l_0$ are distinct show that
\begin{align*}
S(g_+)(\tilde s_{\rm{n}})=\frac{6}{l_0''(-1)}&\ne\frac{6}{l_+''(-1)}=S(g_+)(s_{\rm{n}}), \\
S(g_+)(\tilde s_{\rm{s}})=-\frac{6}{l_0''(1)}&\ne\frac{6}{l_+''(-1)}=S(g_+)(s_{\rm{n}}).
\end{align*}
Analogously, $S(g_+)(\tilde s_{\rm{n}})\ne S(g_+)(s_{\rm{s}})$ and $S(g_+)(\tilde s_{\rm{s}})\ne S(g_+)(s_{\rm{s}})$. The points $\tilde s_{\rm{n}}$, $\tilde s_{\rm{s}}$, $s_{\rm{n}}$ and $s_{\rm{s}}$ are pairwise distinct. We will show that the $\mathcal A$-orbit of $\tilde s_{\rm{n}}$ or $\tilde s_{\rm{s}}$ intersects the $\tilde{\mathcal A}$-orbit of~$s_{\rm{n}}$. Consequently, $S(g_+)(\tilde s_{\rm{n}})$ or $S(g_+)(\tilde s_{\rm{s}})$ must equal~$S(g_+)(s_{\rm{n}})$. This contradiction will complete the proof.

Using the action $\mathcal A$, we identified the principal part of $\mathbb S^3$ with the product $(-1,1)\times\mathbb S^2$. Let
$$
I:(-1,1)\times\mathbb S^2\to\mathbb S^3\setminus\{s_{\rm{s}},s_{\rm{n}}\}
$$
be the diffeomorphism that realises this identification. Then $\tilde s_{\rm{n}}=I(r_{\rm{n}},\omega_{\rm{n}})$ and $\tilde s_{\rm{s}}=I(r_{\rm{s}},\omega_{\rm{s}})$ for some $r_{\rm{n}},r_{\rm{s}}\in(-1,1)$. The $\mathcal A$-orbits of $\tilde s_{\rm{n}}$ and $\tilde s_{\rm{s}}$ are the sets
\begin{align}\label{two_orbits}
\{(r_{\rm{n}},\omega)\,|\,\omega\in\mathbb S^2\}\qquad\mbox{and}\qquad \{(r_{\rm{s}},\omega)\,|\,\omega\in\mathbb S^2\}.
\end{align}
Without loss of generality, assume that $r_{\rm{n}}\le r_{\rm{s}}$. Fix a continuous curve $\omega:[0,1]\to\mathbb S^2$ such that $\omega(0)=\omega_{\rm{n}}$ and $\omega(1)=\omega_{\rm{s}}$. Denote by $\mathcal O$ the $\tilde{\mathcal A}$-orbit of~$s_{\rm{n}}$. We will show that the first set in~\eqref{two_orbits} intersects~$\mathcal O$. This will complete the proof.

Just like $\mathcal A$, the action $\tilde{\mathcal A}$ yields an identification between the principal part of the sphere $\mathbb S^3$ and the product $(-1,1)\times\mathbb S^2$. Using this identification, we can split $\mathbb S^3$ into a union of connected disjoint sets
\begin{align*}
\mathbb S^3=\mathcal S_{\rm{n}}\cup\mathcal O\cup\mathcal S_{\rm{s}}
\end{align*}
with $\tilde s_{\rm{n}}\in\mathcal S_{\rm{n}}$ and $\tilde s_{\rm{s}}\in\mathcal S_{\rm{s}}$. Both $\mathcal S_{\rm{n}}$ and $\mathcal S_{\rm{s}}$ are open. The orbit $\mathcal O$ is their common boundary. The curve $\gamma:[0,1]\to\mathbb S^3$ given by
\begin{align*}
\gamma(t)=I((1-t)r_{\rm{n}}+tr_{\rm{s}},\omega(t))
\end{align*}
connects the points $\tilde s_{\rm{n}}\in\mathcal S_{\rm{n}}$ and~$\tilde s_{\rm{s}}\notin\mathcal S_{\rm{n}}$. Consequently, it must have a nonempty intersection with~$\partial\mathcal S_{\rm{n}}$. We conclude that there exists a point $(r',\omega')\in(-1,1)\times\mathbb S^2$ such that $r'\ge r_{\rm{n}}$ and $I(r',\omega')\in\mathcal O$. Since $\mathcal O$ is connected and $s_{\rm{n}}$ also lies in $\mathcal O$, the inclusion $I(r_{\rm{n}},\omega'')\in\mathcal O$ holds for some~$\omega''$. Thus, the first set in~\eqref{two_orbits} intersects~$\mathcal O$.
\end{proof}

\section{Dynamics for left-invariant metrics on $SU(2)$}\label{SU2}

Fix a bi-invariant background metric $g_0$ on~$SU(2)$ satisfying
\begin{align*}
g_0(X,Y)=-2\tr(XY),\qquad X,Y\in\mathfrak{su}(2).
\end{align*}
Choose a $g_0$-orthonormal basis $\{e_1,e_2,e_3\}$ of $\mathfrak{su}(2)$. Then $[e_i,e_j]$ must equal $e_k$ up to sign for every permutation $(i,j,k)$ of $\{1,2,3\}$. Replacing $e_1$ by $-e_1$ if necessary, we may assume that
$[e_i,e_j]=e_k$
as long as $(i,j,k)$ is even. Given a left-invariant metric~$g$ on~$SU(2)$, define
\begin{align*}
\lambda(g)&=\sup\Big\{\frac{g(X,X)}{g(Y,Y)}\,\Big|\,X,Y\in\mathfrak{su}(2)~\mbox{and}~g_0(X,X)=g_0(Y,Y)=1\Big\}.
\end{align*}
This is the ratio of the largest eigenvalue of the matrix of $g$ in $\{e_1,e_2,e_3\}$ to the smallest.
Our next theorem reveals that $\lambda(g)$ increases as one passes from $g$ to~$X(g)$. Several results of this nature are known for the Ricci curvature; see, e.g.,~\cite{PR19,BPRZ21}.

\begin{theorem}\label{thm_SU2}
For every left-invariant positive-definite symmetric (0,2)-tensor field~$Y$ on~$SU(2)$, there exists a unique left-invariant metric~$g$ such that ${X(g)=Y}$. Moreover,
\begin{align}\label{mon_qtty}
\lambda(g)\le\lambda(Y)
\end{align}
with equality holding if and only if $g=\rho g_0$ for some $\rho>0$.
\end{theorem}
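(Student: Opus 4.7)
The plan is to reduce $X(g)=Y$ to an algebraic computation in a common Milnor frame and then verify~\eqref{mon_qtty} via a single explicit identity. Since the adjoint action of $SO(3)$ on $\mathfrak{su}(2)\cong(\mathbb{R}^{3},\times)$ preserves both $g_0$ and the Lie bracket, applying a suitable rotation to $\{e_1,e_2,e_3\}$ produces a new $g_0$-orthonormal basis $\{u_1,u_2,u_3\}$ satisfying $[u_j,u_k]=u_i$ for every even permutation and diagonalising $Y$: $Y(u_i,u_j)=B_i\delta_{ij}$ with $B_i>0$. The existence and uniqueness portion of the theorem was established in~\cite{IG08}, which I would simply quote, together with a short remark that any left-invariant solution $g$ carries its own $g_0$-orthonormal Milnor frame which, generically, must coincide with $\{u_i\}$. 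Thus I may assume $g(u_i,u_j)=A_i\delta_{ij}$ for some $A_i>0$.

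In this setup, a direct Koszul computation in the $g$-orthonormal frame $f_i=u_i/\sqrt{A_i}$ yields
\[
\lambda_i=K(f_j,f_k)=\frac{-3A_i^2+2A_iA_j+2A_iA_k+(A_j-A_k)^2}{4A_1A_2A_3},
\]
and since in three dimensions the Milnor-frame bivectors $\{f_j\wedge f_k\}$ diagonalise the curvature operator, the proof of Proposition~\ref{prop_12to02} gives $X(g)(u_i,u_i)=A_i\lambda_j\lambda_k$. So $X(g)=Y$ is the algebraic system $A_i\lambda_j(A)\lambda_k(A)=B_i$, $i=1,2,3$. A straightforward expansion and factoring of the formula above produces the key identity
\[
\lambda_i-\lambda_j=\frac{(A_j-A_i)(A_i+A_j-A_k)}{A_1A_2A_3}.
\]

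Order the labels so that $A_1\le A_2\le A_3$, so $\lambda(g)=A_3/A_1$ (the basis $\{u_i\}$ being $g_0$-orthonormal). Then $A_1+A_3-A_2\ge A_1>0$, whence $\lambda_1\ge\lambda_3$, strictly if $A_1<A_3$. Therefore
\[
\frac{B_3}{B_1}=\frac{A_3\lambda_1\lambda_2}{A_1\lambda_2\lambda_3}=\frac{A_3}{A_1}\cdot\frac{\lambda_1}{\lambda_3}\ge\frac{A_3}{A_1},
\]
and combining with the trivial bound $\lambda(Y)\ge B_3/B_1$ gives~\eqref{mon_qtty}. In the equality case $\lambda(Y)=\lambda(g)$, the whole chain collapses, forcing $\lambda_1=\lambda_3$ and hence $(A_3-A_1)(A_1+A_3-A_2)=0$; since the second factor is strictly positive, $A_1=A_3$, which together with $A_1\le A_2\le A_3$ forces $A_1=A_2=A_3$, i.e., $g=\rho g_0$ for some $\rho>0$.

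I expect the only delicate point — not really an obstacle — to be the reduction to a common Milnor frame when $Y$ has repeated eigenvalues, where the Milnor frame is not unique. In that case one can argue using the residual $SO(2)$-symmetry to diagonalise $g$ in a compatible frame, and the algebra above goes through verbatim; alternatively, a continuity argument reduces to the generic case. The striking feature is that all the hard content is encoded in the compact factorisation of $\lambda_i-\lambda_j$; once that identity is in hand, the monotonicity and equality statement are immediate.
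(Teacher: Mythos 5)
Your argument is correct and essentially the same as the paper's: both diagonalise $g$ and $Y$ in a common Milnor frame (the paper handles your ``delicate point'' by citing \cite{Buttsworth19} for the fact that any left-invariant solution $g$ must be diagonal in the frame diagonalising $Y$), compute $X(g)(u_i,u_i)=A_i\lambda_j\lambda_k$ from Milnor's sectional-curvature formulas, and deduce \eqref{mon_qtty} by comparing the extreme eigenvalue ratios; your factorisation $\lambda_i-\lambda_j=(A_j-A_i)(A_i+A_j-A_k)/(A_1A_2A_3)$ is exactly the comparison of the two quadratic factors appearing in the paper's displayed ratio $X(g)(e_1,e_1)/X(g)(e_3,e_3)$. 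The one step you leave implicit is that each $\lambda_i>0$, which is needed to pass from $\lambda_1\ge\lambda_3$ to $\lambda_1/\lambda_3\ge1$: positive-definiteness of $Y$ forces all $\lambda_i$ to have the same sign, and the all-negative case is excluded on $\mathbb S^3$ exactly as in the proof of Proposition~\ref{prop_12to02}.
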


\begin{proof}
Without loss of generality, assume that $Y$ is diagonal in the basis $\{e_1,e_2,e_3\}$. One can show that a left-invariant metric $g$ satisfying $X(g)=Y$ must have this property as well; cf.~\cite[Section~4.2]{Buttsworth19}. Existence and uniqueness of such a metric $g$ were established in~\cite[pages~14--18]{IG08}. To prove~\eqref{mon_qtty}, denote 
$$
\rho_i=g(e_i,e_i),\qquad i=1,2,3.
$$
For simplicity, assume that $\rho_1\ge\rho_2\ge\rho_3$. A straightforward computation based on~\cite[Theorem~4.3]{Milnor} shows that
\begin{align*}
X(g)(e_i,e_i)=\frac{(\rho_i^2+\rho_j^2-3\rho_k^2-2\rho_i\rho_j+2\rho_i\rho_k+2\rho_j\rho_k)(\rho_i^2-3\rho_j^2+\rho_k^2+2\rho_i\rho_j-2\rho_i\rho_k
+2\rho_j\rho_k)}{16\rho_i\rho_j^2\rho_k^2}
\end{align*}
for every permutation $(i,j,k)$ of $\{1,2,3\}$; cf.~\cite[pages~14--16]{IG08}. As a consequence,
\begin{align*}
\frac{X(g)(e_1,e_1)}{X(g)(e_3,e_3)}=\frac{\rho_1((\rho_1+\rho_3-\rho_2)^2+4\rho_3(\rho_2-\rho_3))}{\rho_3((\rho_1+\rho_3-\rho_2)^2-4\rho_1(\rho_1-\rho_2))}.
\end{align*}
It becomes clear that
\begin{align*}
\lambda(Y)=\lambda(X(g))\ge\frac{X(g)(e_1,e_1)}{X(g)(e_3,e_3)}\ge\frac{\rho_1}{\rho_3}=\lambda(g)
\end{align*}
with equalities holding throughout if and only if $\rho_1=\rho_2=\rho_3$, i.e., $g=\rho g_0$ for some $\rho>0$.
\end{proof}

Theorem~\ref{thm_SU2} can be used to establish new interesting dynamical properties of the map~${g\mapsto X(g)}$. For example, the following result describes the periodic orbits of this map.

\begin{cor}\label{cor_nstein}
Given $n\in\mathbb N$ and a left-invariant metric $g$ on~$SU(2)$, suppose that
$$
\underbrace{X(X(\cdots X(X}_{j~\mathrm{times}}(g))\cdots))
$$
is positive-definite for every $j=1,\ldots,n$. If
\begin{align*}
\underbrace{X(X(\cdots X(X}_{n~\mathrm{times}}(g))\cdots))=g,
\end{align*}
then $g=\rho g_0$ for some~$\rho>0$.
\end{cor}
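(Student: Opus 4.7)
The plan is to iterate the monotonicity inequality~\eqref{mon_qtty} from Theorem~\ref{thm_SU2} along the orbit and exploit its equality case. Set $g^{(0)}=g$ and, for each $j=1,\ldots,n$, let
$g^{(j)}=\underbrace{X(X(\cdots X(X}_{j~\text{times}}(g))\cdots))$,
so that $g^{(j)}=X(g^{(j-1)})$. By hypothesis every $g^{(j)}$ is a left-invariant positive-definite symmetric $(0,2)$-tensor field, so Theorem~\ref{thm_SU2} applies with the roles $(g,Y)=(g^{(j-1)},g^{(j)})$: we obtain
\begin{align*}
\lambda(g^{(j-1)})\le\lambda(g^{(j)}),\qquad j=1,\ldots,n,
\end{align*}
with equality at stage $j$ if and only if $g^{(j-1)}=\rho_{j-1}g_0$ for some $\rho_{j-1}>0$.

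Chaining these inequalities and using the periodicity assumption $g^{(n)}=g^{(0)}$, I obtain
\begin{align*}
\lambda(g^{(0)})\le\lambda(g^{(1)})\le\cdots\le\lambda(g^{(n)})=\lambda(g^{(0)}),
\end{align*}
so every inequality in the chain is in fact an equality. The equality case of Theorem~\ref{thm_SU2} then forces $g^{(j-1)}=\rho_{j-1}g_0$ for each $j=1,\ldots,n$. In particular $g=g^{(0)}=\rho_0g_0$, which proves the corollary.

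There is essentially no obstacle here: the whole argument is the observation that $\lambda$ is a monotone Lyapunov-type quantity under the map $g\mapsto X(g)$ (in the reverse direction), so it must be constant along any periodic orbit, and the rigidity in the equality case of Theorem~\ref{thm_SU2} pins down $g$ as a multiple of the bi-invariant metric. The only point requiring a moment of care is verifying that Theorem~\ref{thm_SU2} can be applied at each stage, which is immediate because each $g^{(j)}$ is positive-definite and left-invariant by assumption, and $g^{(j-1)}$ is the (necessarily unique) left-invariant metric whose cross curvature equals~$g^{(j)}$.
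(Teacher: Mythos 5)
Your proof is correct and is precisely the argument the paper intends: the corollary is stated immediately after Theorem~\ref{thm_SU2} as a consequence of the monotonicity of $\lambda$ under $g\mapsto X(g)$, and chaining the inequality around the periodic orbit to force equality (hence $g=\rho g_0$ by the rigidity case) is the whole content. No gaps.
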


Scaling properties of the cross curvature (0,2)-tensor and the equality $X(g_0)=\frac1{16}g_0$ imply that $\rho=\frac14$ in Corollary~\ref{cor_nstein} if $n$ is odd.

\section*{Acknowledgements}

The second-named author had several discussions of the prescribed cross curvature problem with David Hartley. These discussions took place when David was a research associate at the University of Queensland in~2013. The unpublished notes he produced during that time were a useful resource for us at the initial stages of this project. Specifically, they helped us arrive at Proposition~\ref{prop_12to02}, Theorem~\ref{thm_local} and Lemma~\ref{so2equationslemma}. We express our gratitude to David for his contribution.

\end{document}